\def\RR{\mathbb{R}}
\newcommand{\eeq}{\end{equation}}
\newcommand{\beq}{\begin{equation}}
\begin{document}

\title{Evaluating the impact of increasing temperatures on changes in Soil Organic Carbon stocks: sensitivity analysis and non-standard discrete approximation %\thanks{Grants or other notes
%about the article that should go on the front page should be
%placed here. General acknowledgments should be placed at the end of the article.}
}
%\subtitle{Sensitivity analysis and non-standard discrete approximation}

\titlerunning{A model for SOC changes}        % if too long for running head

\author{Fasma Diele \and
    Ilenia Luiso \and Carmela Marangi \and    Angela Martiradonna
}

%\authorrunning{Short form of author list} % if too long for running head

\institute{F. Diele, I. Luiso, C. Marangi  \at
              Istituto per Applicazioni del Calcolo 'M.Picone', sede di Bari, Italy \\
              \email{fasma.diele@cnr.it, i.luiso@ba.iac.cnr.it, carmela.marangi@cnr.it}           
           \and
            A. Martiradonna \at
               Università degli Studi di Bari, Bari, Italy\\
               \email{angela.martiradonna@uniba.it}  
  }

%\date{Received: date / Accepted: date}
% The correct dates will be entered by the editor

\maketitle

\begin{abstract}
A novel model is here introduced for the {\it SOC change index} defined   as the normalized  difference between the actual Soil Organic Carbon  and the value assumed at an initial reference year. It is tailored on the RothC carbon model dynamics and assumes as baseline the value of the  SOC equilibrium  under  constant environmental conditions.  A sensitivity analysis is performed to evaluate the response of the model to changes of  temperature, Net Primary Production (NPP), and land use soil class (forest, grassland, arable). A non-standard monthly time-stepping procedure has been proposed to approximate the SOC change index in the Alta Murgia National Park, a protected area in the Italian Apulia region, selected  as test site. In the case of arable class, the SOC change index  exhibits a negative trend which can be inverted by a suitable organic fertilization program here proposed. 
\keywords{Soil Organic Carbon model \and sensitivity analysis \and non-standard discrete approximation}
% \PACS{PACS code1 \and PACS code2 \and more}
\subclass{86A08 \and 65L05 \and 86-10 \and 86-08}
\end{abstract}

\section{Introduction}
\label{intro}
For reporting on Target 15.1, one of the seventeen Sustainable Development Goal (SDG) adopted by the United Nations \cite{minelli2017scientific} in 2015, the Good practice guidance \cite{good} indicates how to  calculate the extent of land degradation. It recommends the development and the use  of  analytical methods for measuring the three indicators which address the key aspects of land-based natural capital: trends in land cover, trends in land productivity and trends in  soil organic carbon (SOC) stocks. These indicators can assess the quantity and the quality of land-based natural capital and most of the associated ecosystem services.

\noindent Roughly speaking, SOC stock is the carbon captured by plants through photosynthesis which remains in the soil after decomposition of soil organic matter.  A decrease in SOC stocks is among the significant universal indicators for land and soil degradation  and can compromise all the efforts to achieve the SDGs especially those with reference to food, health, water, climate, and land management \cite{lorenz2019soil}. 

\noindent  Well-validated models which take into account the  interactions among climate, soil and land use management can be used to predict SOC changes under the different management and climatic conditions. The Rothamsted carbon model (RothC, \cite{coleman1996rothc}, \cite{parshotam1996rothamsted}) is one of the most commonly used tool to simulate soil organic carbon dynamics in arable, grassland and forest systems. Although it does not place the action of bacteria at the hearth of the  mechanisms of decomposition   as required by current theories \cite{lehmann2015contentious,hammoudi2015mathematical}, it is widely used because it captures the general principles of soil organic dynamics,  it is relatively simple and general, it requires relatively few parameters and can be  easily applied at scales  from  regional \cite{farina2013modification}, to global  \cite{morais2019detailed}.

\noindent In this paper, for making a scenario analysis of SOC changes, we propose a novel model tailored on RothC dynamics, which describes the 
evolution of the so-called  {\it SOC change index}. It is defined as the difference between the SOC values at the last  and the first year (as in \cite{morais2018proposal}), here normalized by  the carbon inputs generated by the total plant and the farmyard manure,  both evaluated at the initial baseline year. As test example,  we evaluate the impact of changes in temperature on the achievement  of land degradation neutrality for the  SOC indicator in the Alta Murgia National Park, a protected area in the Apulia region located in the south of Italy. It is known that the increase or decrease of the SOC stocks  under climate change will depend upon which process, in the future and in a given location,   dominates between increased plant inputs through increases in net primary production (NPP), and increased decomposition rates \cite{gottschalk2012will}. With the aim of detecting factors which determine the size and the direction of change  in the considered protected area,  %a sensitivity analysis  has been  performed based on the direct method described in  \cite{dickinson1976sensitivity}. 
a sensitivity analysis, based on the direct method described in  \cite{dickinson1976sensitivity}, is performed.
 The sensitivity analysis %has been 
is applied to a modified version of the {\it SOC change index model}, based on time averaged values, and %has provided 
provides
 local information on the impact of parameters change on the behavior of the system solution. In particular, we %have evaluated 
evaluate the impact on  the SOC change index of the variation of  three representative  parameters: mean annual temperature,  NPP annual values with respect to  reference values and degree of decomposability of plant material (the so-called {\it DPM/RPM ratio}), which in turn is related to the class of land use (forest, grassland and arable).
 
 \noindent %Trends in SOC changes from $2005$ taken as baseline year and $2019$ as final year,  
Trends in SOC changes from $2005$, taken as baseline year, to $2019$, the final year,
 %have been
 are  simulated by means of a monthly discrete non-standard approximation of the continuous model for forest, grassland and arable systems. It is based on the discrete non-standard monthly time stepping procedure provided in \cite{diele2021non} for solving the carbon dynamics in all of the compartments. Given the linearity of the RothC model, the SOC change can be discretized with the same matrix function of the monthly stepsize. Results obtained indicate positive trends for SOC change in case of  both forest and grassland  systems.  
When the arable class is considered without including the input of farm fertilizers, our model predicts a  negative trend of the introduced normalized SOC change variable. As  a final result, we evaluate  the optimal organic fertilization program  to invert the trend and  keep positive the SOC change.  When used with predicted climate and NPP data, the optimal fertilization program may guarantee the achievement of land degradation neutrality for the SOC indicator. 

\noindent The paper is organized as follows. In Section \ref{sec:1} we briefly describe the original RothC model and define the SOC indicator for the continuous counterpart of the original model. Moreover  we introduce a more realistic representation of the density function of the plant carbon input which can be proven to be periodic. Input data and parameters are then identified and described. In Section \ref{sec:3} we explain how the issue of determining the initial carbon input  is solved in the proposed formulation and we define a novel SOC change index which  overcome the problem. Then, in Section \ref{sec:4} we analyze the model assuming that there is no carbon input due to the organic fertilization and determine  the sensitivity of the model to the variation of the above mentioned  parameters:  temperature, NPP and land use class. The issue of a possible positive contribution of organic fertilization is faced in Section \ref{sec:5} where we propose to consider the farmyard manure input as a control variable to reach neutrality, and modify the model accordingly. To perform the simulations, we apply  a numerical non-standard technique which preserves the equilibrium state of the continuous dynamics and is  described in Section \ref{sec:6}. In Section \ref{sec:7} we present a test case illustrating the trends of SOC change in a protected area,  in the years 2005-2019, as a function of the measured changes of temperature and NPP for the three land use classes analyzed (forest, grassland, arable). Finally, in Section \ref{sec:8} we draw our conclusions.

\section{The RothC model}
\label{sec:1}
Within the RothC model, soil organic carbon   is divided into the five carbon pools noted: $c_{dpm}$, $c_{rpm}$, $c_{hum}$, $c_{bio}$ and $c_{iom}$ (see Figure \ref{fig:1}).
\begin{figure}
\begin{center}
\includegraphics[width=\textwidth]{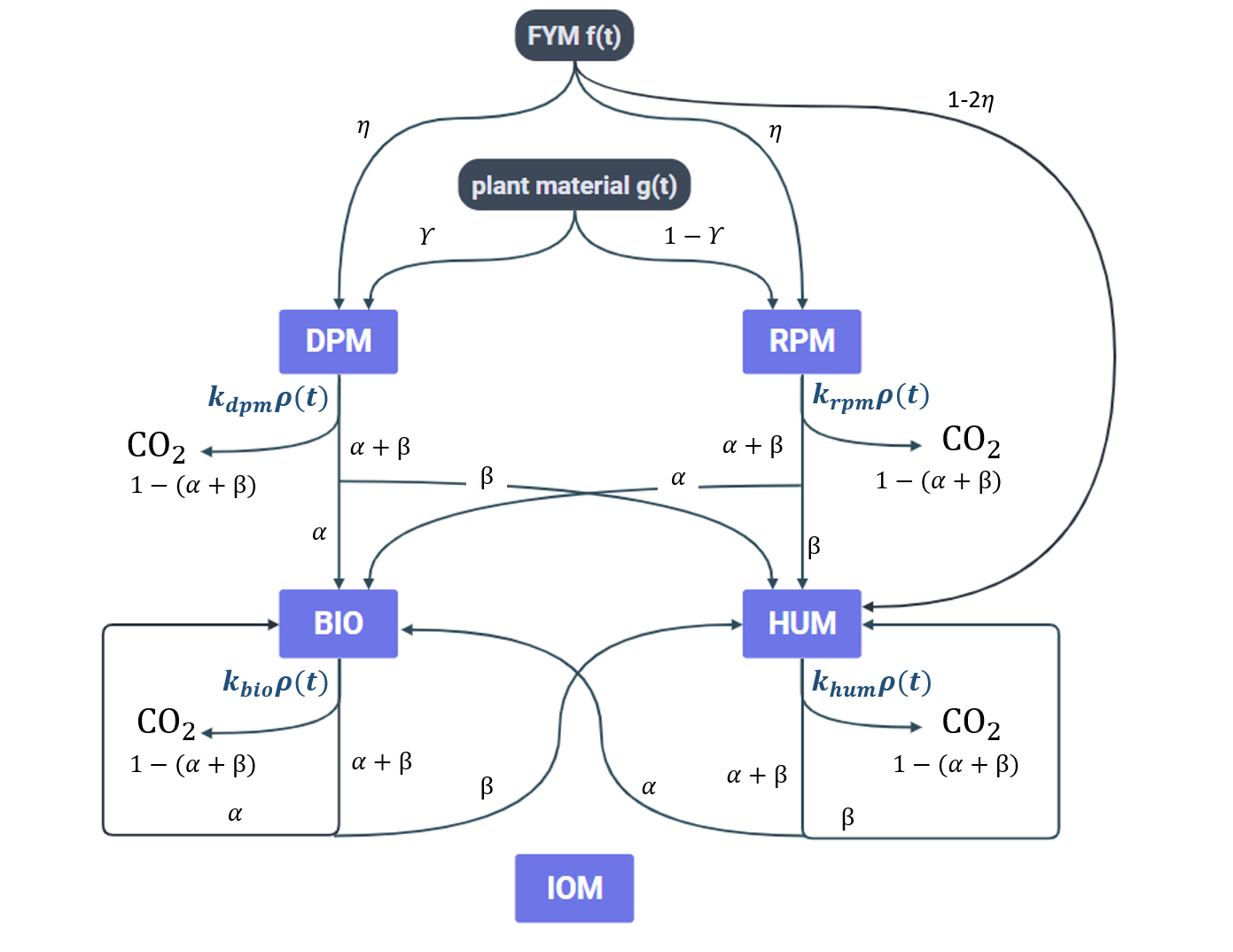} 
\caption{\small  Flow chart of the RothC model.} 
\label{fig:1}
\end{center}
\end{figure}
The already decomposed plant material is regarded as $c_{hum}$, whereas the total carbon mass of microbial organisms is represented by the $c_{bio}$ pool. All non decomposable or inert material is defined as $c_{iom}$. In general, all pools $c_{i}$ will decompose and form $CO_2$, $c_{bio}$ and $c_{hum}$.  The four active compartments  $c_{dpm}$, $c_{rpm}$, $c_{hum}$ and $c_{bio}$,  undergo decomposition as a function of different rate constants which correspond to the entries of the vector $\mathbf{k}\,=\, [k_{dpm},k_{rpm},k_{bio},k_{hum}]^\intercal$, and of the rate modifier $\rho(t)$ which depends on the clay content of the soil, on climate variables (rainfall, temperature, open pan evaporation) and land cover.  The fraction $\alpha\,+\beta$ of metabolised carbon incorporated into the sum of compartments $ c_{bio}(t)\, +\,c_{hum}(t)$ is determined by the clay content of the soil, while the remaining part $\delta:= 1-\alpha-\beta$ is 
released as $CO_2$ and lost by the system.

\noindent For the aim of what follows we denote with $T >0$ the length of a reference time interval  (generally one year) and we formulate the  RothC model as:
\begin{equation}\label{eq:RothCode}
\displaystyle \frac{d\mathbf{c}}{dt}\, = \, \rho(t)\, A\, \mathbf{c} \, +\, \mathbf{b}(t),\, \qquad t\in ]\,t_0\,+\,nT,\, t_0\,+\,(n+1)T\,]\,, \quad  n=0,\,\,\dots,
\end{equation}
where $\mathbf{c}(t)=[c_{dpm}(t),\, c_{rpm}(t),\, c_{bio}(t),\, c_{hum}(t)]^\intercal$ and 
$\mathbf{c}(t_0)=\mathbf{c}_0 \geq 0$ denotes the vector of the initial concentrations. 
The matrix $A$ is given by 
$$
A=\left(\begin{array}{cccc}
-k_{dpm} &0 & 0&0 \\\\
\displaystyle 0 & -k_{rpm}& 0 & 0\\\\
 \displaystyle \alpha\,k_{dpm}\;\; & \;\alpha\,k_{rpm}\;\;& (\alpha-1)\,k_{bio}& \alpha\,k_{hum}\\\\
\displaystyle \beta\, k_{dpm} & \beta\, k_{rpm} & \beta\, k_{bio} &(\beta-1) \,\, k_{hum}
 \end{array}\right).
$$
\noindent The vector $\mathbf{b}(t)$ represents the carbon amount entering the system at time $t$. It %considers 
takes into account both the input of plant residues $ g(t) \, \mathbf{a}^{(g)} $ and the input of farmyard manure (FYM) $f(t)\, \mathbf{a}^{(f)}$, so that $$ \mathbf{b}(t): =\, g(t) \, \mathbf{a}^{(g)} \,+\, f(t)\, \mathbf{a}^{(f)}.$$ The entries of vectors $\mathbf{a}^{(g)}:=[\gamma, \,1\,-\, \gamma, \,0, \, 0]^\intercal$ and $\mathbf{a}^{(f)}:=[\eta, \, \eta,\, 0, \, 1\,-\,2\,\, \eta]^\intercal$ are the fraction inputs $0\, \leq \, \gamma\,\leq 1$, $0\, \leq \, \eta\,\leq 1/2$, which sum up to $1$.

\begin{definition}
\label{SOCind}
We define as SOC indicator of the continuous RothC model (\ref{eq:RothCode}) the function $SOC(t)\,= c_{iom}{(t)}\,+c_{dpm}(t)\,+ c_{rpm}(t)\,+ c_{bio}(t)\,+ c_{hum}(t)$ for $t\geq t_0$,
where $c_{iom}$ denotes the constant carbon content in the inactive compartment IOM.
\end{definition}

\noindent Although different approaches can be adopted for calculating the size of IOM, \cite{parshotammodelling},\cite{parshotam1999inert}, 
here we use the classical  equation  given by Falloon et al. in \cite{falloon1998estimating}: 
$$
c_{iom}(t)\,=\, 0.049\, SOC^{1.139}(t)
$$
so that the SOC indicator is obtained by solving the equation 
$$
 0.049\, SOC^{1.139}{(t)}\,-\, SOC{(t)}\, +\, soc(t)\, =\, 0,
$$
where
\begin{equation}\label{eq:SOC}
    soc(t):=\, c_{dpm}(t)\,+ c_{rpm}(t)\,+ c_{bio}(t)\,+ c_{hum}(t)
\end{equation}
satisfies the differential equation
% \noindent Under the hypothesis of Theorem \ref{th:invariantset}, the set 
% $$
% \Omega_{SOC}= \left\{SOC \in R_{+}, \quad 0 \leq SOC\,-\, 0.049\, SOC^{1.139} \leq \displaystyle \frac{B}{\mu\, \delta\,k_{min}}\displaystyle 
% \right\}
% $$ is globally attractive for the SOC model
 \begin{equation}\label{SOCmodel}
 \begin{array}{lcl}
  \displaystyle \frac{d soc}{dt}(t)\,     = \mathds{1}^\intercal \,\, \displaystyle \frac{d\mathbf{c}}{dt}(t) &=&\,\rho(t)\, \mathds{1}^\intercal \, A \, \mathbf{c} \, +\, g(t)\, +\, f(t)\\\\
      & =  & -\rho(t)\, \delta\, \mathbf{k}^\intercal \mathbf{c} \, +\, g(t)\, +\, f(t).
 \end{array}
 \end{equation}

\subsection{A realistic representation of $g(t)$}
\label{sec:2}
\noindent Towards a realistic analytic representation of the density function $g(t) $ of plant carbon  input, we consider that $g(t)$ can be represented as follows
\begin{equation}\label{gdiff}
    g(t)\, =\,\displaystyle P(\,t_0+n\,T)\,\,\, \hat g(t) \qquad  \forall t\in [\,t_0\,+\,nT,\, t_0\,+\,(n+1)T\,], \quad  n=0,\,\,\dots,
\end{equation}

\noindent where
\begin{equation}\label{hatg}
    \hat g(t)\,:= \displaystyle \frac{g(t)}{\displaystyle \int_{t_0\,+\,nT}^{t_0\,+\,(n+1)T} \, g(s) \, ds}.\, 
\end{equation}
The function $\hat g$ represents the density distribution of plant carbon inputs into the soil expressed as a proportion of the  total $P(\,t_0+n\,T):= \displaystyle \int_{t_0+nT}^{t_0+(n+1)T} g(s) \, ds$,  in each time interval $[t_0+nT,\, t_0+(n+1)T]$ of length $T$, for $n=0,\, 1,\, \dots$. 
In real applications  the function $\hat g(t)$ is known and, as it depends only on  seasonality, it is  well represented by an annual periodic function.
We have the following result.
\begin{theorem}\label{thm:hatg}
Set $T>0$ and suppose that $g(t)$ is a positive function which satisfies the following property
$$
g(t\,+\, T)\, =\, g(t)\,  \displaystyle \frac{\displaystyle \int_{t_0+(n+1)T}^{t_0+(n+2)T}g(s)\, ds}{\displaystyle \int_{t_0+nT}^{t_0+(n+1)T}g(s)\, ds},
$$
for all $t\in [t_0+nT,\, t_0+(n+1)T]$, and $n=0,\,1,\,\dots.$
Then, the function $\hat g(t)$, defined in (\ref{hatg}), satisfies $0\,<\, \hat g(t)\, < 1$, results  periodic with period $T$ and $\displaystyle \int_{t_0}^{t_0+T}\hat g(s)\, ds\,\,=\,\displaystyle \int_{t_0+nT}^{t_0+(n+1)T}\hat g(s)\, ds\,=1$,  for all  $n=0,\,1\,\dots$.
\end{theorem}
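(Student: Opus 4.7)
The plan is to verify the three assertions separately, handling periodicity last since that is the content-bearing step which actually uses the functional equation imposed on $g$. I introduce the shorthand $P_n := \int_{t_0+nT}^{t_0+(n+1)T} g(s)\,ds$ and $I_n := [t_0+nT,\, t_0+(n+1)T]$, so that by construction $\hat g(t) = g(t)/P_n$ for every $t \in I_n$.

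First the pointwise bounds: since $g > 0$ by hypothesis, both the numerator $g(t)$ and the denominator $P_n$ in the defining ratio are strictly positive, giving $\hat g(t) > 0$ immediately. The bound $\hat g(t) < 1$ amounts to the pointwise inequality $g(t) < P_n$; in the intended application $T$ is the length of a full year, so the total integrated plant input dominates any instantaneous value of $g$, and I would justify it as a consequence of the boundedness of $g$ on the compact subinterval $I_n$ together with the sufficiently large reference length $T$.

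The main computation is the periodicity. For $t \in I_n$ one has $t+T \in I_{n+1}$, hence by the very definition of $\hat g$ I get $\hat g(t+T) = g(t+T)/P_{n+1}$. Substituting the hypothesis $g(t+T) = g(t)\cdot P_{n+1}/P_n$ into the numerator, the factor $P_{n+1}$ cancels and I am left with $\hat g(t+T) = g(t)/P_n = \hat g(t)$. Since this identity holds for every $n \ge 0$, the function $\hat g$ is periodic with period $T$ on $[t_0, \infty)$.

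Finally, the integral identities follow by a direct computation: $\int_{I_n}\hat g(s)\,ds = (1/P_n)\int_{I_n} g(s)\,ds = P_n/P_n = 1$, and the equality with $\int_{I_0}\hat g(s)\,ds$ is then automatic from the periodicity just established. The only step that requires argumentation beyond unwinding definitions is the pointwise upper bound $\hat g < 1$; by contrast, the cancellation yielding periodicity is essentially forced once the hypothesis is rewritten in the equivalent form $g(t+T)/P_{n+1} = g(t)/P_n$.
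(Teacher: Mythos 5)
Your periodicity argument is exactly the paper's own proof: for $t\in[t_0+nT,\,t_0+(n+1)T]$ one has $t+T\in[t_0+(n+1)T,\,t_0+(n+2)T]$, so $\hat g(t+T)=g(t+T)/P_{n+1}=g(t)/P_n=\hat g(t)$ after substituting the hypothesis, and the paper treats the remaining assertions (positivity, the unit integrals) as trivial, just as you do. One caveat on the bound $\hat g(t)<1$, which you rightly single out as the only non-definitional step: it does not actually follow from positivity of $g$ and the scaling hypothesis (a sharply peaked $g$ can satisfy $g(t)>\int_{I_n}g$), and your proposed justification via boundedness of $g$ plus a ``sufficiently large'' $T$ is a heuristic tied to the application's units rather than a proof; the paper supplies no argument for it either, so your attempt matches the paper's proof in substance, but you should regard that bound as an extra modelling assumption rather than a consequence of the stated hypotheses.
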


\begin{proof}
The result trivially follows by observing that if  $t\in  [t_0+nT,\, t_0+(n+1)T\,[$, then $t\,+\, T\, \in \,  [t_0+(n+1)T,\, t_0+(n+2)T\, [$. Consequently,
$$
\hat g(t+T)\, =\, \displaystyle \frac{g(t\,+\, T)}{\displaystyle \int_{t_0+(n+1)T}^{t_0+(n+2)T} \, g(s) \, ds}\, = \, \hat g(t), 
$$
for all $t\in [t_0+nT,\, t_0+(n+1)T\, ]$ and $ n=0,\,1,\,\dots.$
\end{proof}

\subsection{Input data and parameters} 
Let us identify all the input data necessary to the RothC dynamics. 
\begin{itemize}
    \item Input per unit time ($month$) of plant residues $g(t)\,  [ t\, C\, ha^{-1}\, month^{-1}]$ and farmyard manure $f(t) [t\, C \,ha^{-1}\, month^{-1}]$, if any.

     \noindent The function $g(t)$ is supposed to be expressed as in (\ref{gdiff}). By means of Net Primary Production (NPP), it is possible to estimate 
\begin{equation}
\begin{array}{rcl}
  P(t_0+n\,T)\,   & =&  P(t_0+(n-1)\,T)\, \displaystyle \frac{NPP(t_0+n\,T)}{NPP(t_0+(n-1)\,T)}\\ \\
     & =& P(t_0)\, N_P^{(n)} \qquad  \forall  n=1,\,2\,\dots
\end{array}
    \end{equation}
     the total plant carbon input in the  year $[t_0+nT,\, t_0+(n+1)T]$, where $ N_P^{(n)}:=\displaystyle \frac{NPP(t_0+n\,T)}{NPP(t_0)}$. The function $\hat g(t)= \hat g_r(t)$ is supposed annual periodic and assuming different known shapes according to the land use. 
    \item clay content of the soil $cly$ (as a percentage);
\item  $r$ the degree of decomposability of incoming plant material, i.e. the {\it DPM over RPM ratio};
\item   air temperature $Temp(t) \, [^\circ C]$, rainfall $rain(t)\, [mm]$,  potential evapotranspiration\footnote{The original model uses open pan evaporation; here the model is used in a  modified version which makes use of potential evapotranspiration} $pet(t)$. In our tests $pet(t)$ is estimated from weather data by means of Thornthwaite's formula (see Appendix). 
%and open pan evaporation $open(t)\, [mm]$
\item $\mathbf{c}(t_0)\, [ \,t\, C\, ha^{-1}]$ the vector  of the initial concentrations sampled at a soil layer of depth $d\, [cm]$.
\end{itemize}

\noindent Let us identify all the parameters involved in the RothC dynamics. 
\begin{itemize}
    \item $A\,=\,A(\alpha,\beta, \mathbf{k})\, $.
From the  clay content, we can evaluate the {\it Soil Texture Factor} according to $x\,=\, 1.67\,(1.85\,+\,1.60\,e^{-0.0786\, cly})$, and consequently  $\alpha\, = \,\displaystyle \frac{0.46}{x+1}$ and $\beta\, =\, \displaystyle \frac{1}{x+1}\,-\, \alpha$;
the entries of $\mathbf{k}$\, are  given by  $k_{dpm}\,=\,10/T \,[time^{-1}],\,k_{rpm}\,=\,0.3/T\, [time^{-1}],\,k_{bio}\,=\,0.66/T\, [time^{-1}],\,k_{hum}\,=\,0.02/T\,[time^{-1}]$ .
    \item $\mathbf{b}(t)\, =\, \mathbf{b}(t,\gamma,\, \eta).$ Here $\eta\, =\, 0.49$ while $\gamma(r)\,=\, \displaystyle \frac{r}{r\,+\,1}$  varies according to the land use. Values  $0\,<\,r\,< 0.5$ of  DPM over RPM ratio are associated to the {\it forest} class, $0.5\,\leq\,r\,<1 $ to the  {\it grassland} class, $r\,\geq 1 $ to the {\it arable} class.

    \item $\rho(t)\,=\, k_a(Temp(t))\,\, k_b\left(Acc(rain(t),\,M(cly,d)\right)\,\, k_c(t,r)$.

    \noindent The modifying factor related to the temperature is generalized with respect to the original given in \cite{coleman1996rothc}, in order to assume value equal to $1$ in correspondence of the mean annual temperature  $Temp^{(0)}$  in the interval $[\,t_0,\, t_0\,+\,T\,[ $, i.e.
$$
     k_a(Temp(t))\, :=\, \displaystyle \frac{47.91}{1\,+\, e^{\displaystyle \frac{106.06}{Temp(t)\,+\,(106.06/log(46.91)\,-\,Temp^{(0)})}}},
     $$
     so that  $k_a(Temp^{(0)})\, =\, 1$.
     
     \noindent The factor $k_c(t,r)$,  associated to  the soil cover, 
   
    $$
     \quad k_c(t,r)\,=\, \left\{ \begin{array}{cl}
          0.6& \quad 0 < r <1 \\
          S_r(t)& \quad r \geq 1 ,
     \end{array}
      \right.$$
   with $S_r(t)\, =S_r(t\,+\,T)$ assuming  values between $0.6$ in the periods of the year when soil is vegetated and the maximum value $1$, when bare.

     \noindent The maximum soil moisture deficit $M$ and the point at which respiration (i.e. microorganism activity) begins
     to slow $M_b$, are defined as 
     $M:=M(cly,\, d)\,= -\, ( 20 \,+\, 1.3\,  cly\,-\, 0.01\, cly^2)\, \displaystyle \frac{d}{23}$ and  $ M_b\, =\, 0.444\, M$. The  accumulated soil moisture deficit  
     $Acc(t,M)$ is calculated 
from the first time in $[t_0+nT, \, t_0+(n+1)T]$ where evaporation $pet(t)$ exceeds rainfall %rain(t)$, until it reaches
the maximum soil moisture deficit $M$. When there is more rainfall
than evaporation, the soil will start to wet up.

\noindent  The rate modifying factor for moisture  varies between
$0.2$ and $1$ as follows
 $$
    k_b(Acc(t,M))\,: =\,  \left\{
     \begin{array}{cl}
      \, 0.2\, +\, (1-0.2) \, \displaystyle \frac{M\, -\, Acc(t,M)}{M\,-\,M_b } & \quad Acc(t,M)<M_b \\
     1 &\quad \textup{otherwise}.
     \end{array}
     \right.
     $$
     
    \end{itemize}  
     
  \section{Determining the initial plant inputs}\label{sec:3}
In all practical applications,   RothC is run in
\textquoteleft reverse mode' to calculate the initial plant inputs to the soil for the given environmental conditions. The underlying hypothesis is that the observed carbon stocks correspond to a stable constant or annual periodically varying long-term solution for their dynamics.  Once the initial plant inputs
 have been established in this way, in order to simulate future scenarios, the time changes in carbon inputs to the soil, associated  with changes in NPP (Smith et al., 2005), changes in climate conditions, or change in land use are implemented. 

\noindent Under the hypothesis that the observed carbon stocks correspond to their values at a stable equilibrium, we are going to illustrate how it is possible to avoid the first run in
\textquoteleft reverse mode' to calculate the initial plant inputs.
 Once a monitoring temporal interval $[t_0+T,\, T_f]$ is set,  by following the approach indicated in \cite{minelli2017scientific}, the baseline of $SOC$ indicator against which Land Degradation Neutrality is to be achieved, is supposed to  correspond to the carbon stocks equilibrium for averaged values of temperature, accumulate soil moisture deficit, and soil cover in a period $[t_0, \, t_0\,+\,T]$ immediately prior the monitoring time interval.    

\noindent  As concerns the average value for the   factor $k_c(t,r)$  associated to  the soil cover, it 
  can be approximated as follows:
  $$
    \quad \overline{ k_c(r)}\,=\, \left\{ \begin{array}{cl}
         0.6& \quad0 \leq r < 1 \\
        \displaystyle \displaystyle \fint_{t_{0}}^{t_{0}\,+\, T} S_r(s)\, ds \approx 0.6\,  +\, \displaystyle \frac{N_b}{30}\, &\quad r \geq 1,
    \end{array}
     \right.$$
where  $0\leq N_b\leq 12$ (generally $N_b=4$, see e.g. \cite{smith2005projected}) is the number of months per year of bare soil for arable class.
\noindent In order to have a smooth dependence on $r$, we approximate $\overline{k_c(r)}$ with the $\mathcal{C}^{\infty}$-function
\begin{equation}\label{eq:kc}
    k_c(r)\,:=\, 0.6\, +\, \displaystyle \frac{N_b}{30}\, \displaystyle \frac{e^{x(r)}}{1+e^{x(r)}},\qquad x(r)\,:=\, \displaystyle \frac{30\,(r-1)}{r}\qquad  r\, > \, 0.
\end{equation}
The function $k_c(r)$ for a generic crop related to $N_b=4$ bare months per year, is illustrated in Figure \ref{fig:rateconstant}. 
\begin{figure}[ht] 
\begin{center}
  \includegraphics[width=11cm]{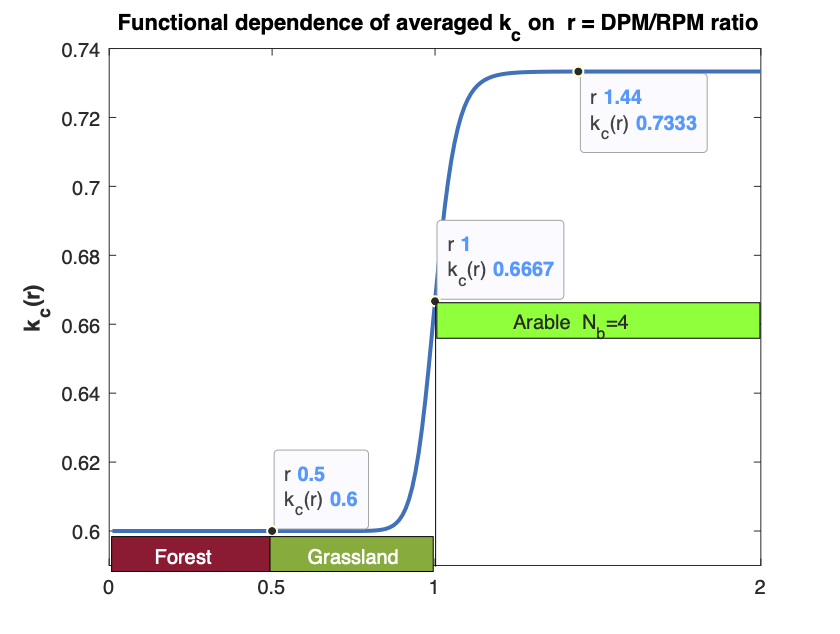}
  \caption{The rate constant modifying factor $k_c$ as a smooth function of DPM/RPM ratio. }\label{fig:rateconstant}
\end{center} 
\end{figure}

\noindent Denoting with $Temp^{(0)}$ and  ${Acc}^{(0)}$  the averaged values  for  temperature and accumulated soil deficit  on the  period $[t_0, \, t_0\,+\,T]$ assumed as reference interval, then the modifying factor $\rho(t)$ is approximated by  $\rho^{(0)}(r)\, := k_b(Acc^{(0)})\,k_c(r)$, as  $ k_a(Temp^{(0)})\, =\, 1$.

\noindent Setting  $F(t_0)\, =\displaystyle \int_{t_0}^{t_0\,+\,T}   f(s)\,ds$, then the model (\ref{eq:RothCode}), can be written as 
\begin{equation}\label{eq:first_interval}
    \displaystyle \frac{d \mathbf{c}}{dt}(t)\, =\,
\rho^{(0)}(r)\,  A \, \mathbf{c}\,+\, \displaystyle \frac{P(t_0)}{T} \, \mathbf{a}^{(g)}\, +\, \displaystyle \frac{F(t_0)}{T}\, \mathbf{a}^{(f)}, \qquad t\in]t_0,\,t_0\,+\,T].
\end{equation}

\noindent Suppose that  $\mathbf{c}(t_0)$ i.e. the distribution of the measured  $SOC(t_0)$ among compartments is known and satisfies
$$
 0.049\, SOC^{1.139}(t_0)\,-\, SOC(t_0)\, +\, \mathds{1}^\intercal \,\mathbf{c}(t_0) =\, 0.$$
We assume that $\mathbf{c}(t_0)$ is equal to the equilibrium of the dynamical system (\ref{eq:first_interval}), i.e.  
\begin{equation}\label{eq:cstarzero}
  \mathbf{c}(t_0)\, =\, \displaystyle -\frac{1}{T\,\rho^{(0)}(r)}A^{-1} \left( P(t_0)\, \mathbf{a}^{(g)}\,+\, F(t_0)\, \mathbf{a}^{(f)}\right). 
\end{equation}
Consequently,
\begin{equation}\label{eq:P0+F0}
\begin{array}{rcl}
   P(t_0)\, \mathbf{a}^{(g)}   & =& -T\,\rho^{(0)}(r)\,  A \, \mathbf{c}(t_0)\,-\, F(t_0)\, \mathbf{a}^{(f)}  \\\\
    P(t_0) \,+\, F(t_0) & =& -T\, \rho^{(0)}(r)\,\mathds{1}^\intercal \, A \, \mathbf{c}(t_0)\,=\, T\, \rho^{(0)}(r) \delta\, \mathbf{k}^\intercal\, \mathbf{c}(t_0). 
\end{array}
  \end{equation}

\noindent Under the hypothesis that $F(t_0)$ is known (i.e. the amount of the total farmyard manure used in the interval $[t_0, \, t_0\,+\, T]$), it follows that the  initial plant inputs to the soil is given by 
\begin{equation}\label{eq:P0}
\begin{array}{rcl}
     P(t_0) &=& T\,\rho^{(0)}(r)\,\delta\, \left(k_{dpm}c_{dpm}(t_0)\,+\,k_{rpm}c_{rpm}(t_0)\,\right.\\\\
     & +&\left. k_{bio}c_{bio}(t_0)\,+ \,k_{hum}c_{hum}(t_0)\right)\, -\, F(t_0)
\end{array}
\end{equation}
 Then, for all $n=1,\,2\, \dots$ the system
\begin{equation}\label{eq:model}
   \begin{array}{l}
   \displaystyle \frac{d \mathbf{c}}{dt}(t)\,=\, 
\rho(t)\,  A \, \mathbf{c}\,+\,   P(t_0+n\,T)\,\hat g_r(t) \, \mathbf{a}^{(g)}\, +\,   f(t) \, \mathbf{a}^{(f)}\\\\
P(t_0+n\,T) \,=\, P(t_0)\, N_P^{(n)}
\end{array} 
\end{equation}
 is solved for $t\in]t_0+nT,\,t_0+(n+1)T]$ starting from $\mathbf{ c}(t_0\,+\,T)\, =\,\mathbf{c}(t_0) $ given in (\ref{eq:cstarzero})  and $P(t_0)$ given in (\ref{eq:P0}),  until $t_0+(n+1)T\,\leq  T_f$. 
 
\bigskip
 \noindent For making a scenario analysis of  SOC change, which does not depend on the specific initial measured SOC value but only on the hypothesis of an initial environmental equilibrium, a useful tool is given by the {\it SOC change index} defined as the variable of change of carbon stocks normalized as follows.

\begin{definition}\label{def:delta_SOC}
We indicate with $\Delta soc_{\rho^{(0)}(r)}(t)$ the  {\it SOC change index} defined as 
$\Delta soc_{\rho^{(0)}(r)}(t):=\, \displaystyle \frac{soc(t)\, -\, soc(t_0)}{P(t_0)\,+\, F(t_0)}$
with $soc(t):= \mathds{1}^\intercal  \, \mathbf{c}(t)$, where $\mathbf{c}(t)$ solves  (\ref{eq:model}) and $P(t_0)\,+\, F(t_0)$ is given in (\ref{eq:P0+F0}).
\end{definition} 
 
 \noindent Notice that the sign of the index $\Delta soc_{\rho^{(0)}(r)}(t)$  detects if at the time $t$ the sum of  soil carbon contained in  compartments is greater than its initial value. 
 In what follows we firstly consider  the dynamics of SOC changes index when no farmayard manure input the system that is, generally the case of (not improved) grassland and forest classes.
%  Finally, the solution of  the equation 
%  $$
%  0.049\, SOC^{1.139}(T_f)\,-\, SOC(T_f)\, +\, \mathds{1}^\intercal \,\mathbf{c}(T_f) =\, 0
% $$
% provides the final value  $SOC(T_f)$ to be compared with $SOC(t_0)$ in order to establish if LDN has been achieved.

\section{A model for SOC changes without farmyard manure input}\label{sec:4}
Soil organic carbon dynamics are driven by changes in climate and land cover or land use. In natural ecosystems, the balance of SOC is determined by gains, through plant and other organic inputs, and losses, due to the organic matter turnover \cite{smith2005projected}.
% It is predicted that plant inputs, through increases in net primary production (NPP), will increase globally in the future due to the longer growing seasons in cooler regions and the fertilisation effect of CO2, but at the same time SOC turnover will be enhanced by increasing temperature. 
Globally, under a warming climate, increases are seen both in carbon inputs to the soil due to higher NPP, and in SOC losses due to increased decomposition. The balance between these processes defines the change in SOC stock. In some regions the processes balance, but in others, one process is affected by climate more than the other.

\noindent In order to test the effectiveness of SOC change index defined in (\ref{def:delta_SOC}) for detecting changes in SOC stock in a specific area, we deduce its temporal dynamics in Corollary \ref{thm:change_s}, preceded by the following theorem.
\begin{theorem}\label{thm:forestandgrass}
In case of no farmyard manure input, the dynamics of the variable 
% $\Delta \mathbf{c}_{\rho^{(0)}(r)}(t):=\,  \displaystyle \frac{ \mathbf{c}(t_0)\, -\mathbf{c}(t)}{\rho^{(0)}(r)\,\mathds{1}^\intercal \, A \, \mathbf{c}(t_0)}$ 
$$\Delta \mathbf{c}_{\rho^{(0)}(r)}(t):= \, \displaystyle \frac{ \mathbf{c}(t)\, -\mathbf{c}(t_0)}{P(t_0)},\quad
\quad  t\in[t_0+nT,\,t_0+(n+1)T], \quad n=1, 2, \dots,$$
is governed by the equation
\begin{equation}\label{eq:change_c}
    \displaystyle \frac{d \Delta \mathbf{c}_{\rho^{(0)}(r)}}{dt}(t)\, =\,  \rho(t) \, A\, \Delta \mathbf{c}_{\rho^{(0)}(r)}\, +\, \left( \displaystyle N_P^{(n)}\, \hat g_r(t) \,-\, \displaystyle \frac{\rho(t)}{T\, \rho^{(0)}(r)}  \right) \mathbf{a}^{(g)}, 
\end{equation}
where $\Delta \mathbf{c}_{\rho^{(0)}(r)}(t_0+T)\, =\, \Delta \mathbf{c}_{\rho^{(0)}(r)}(t_0)\,=\, \mathbf{0}$ and $N_P^{(n)}=\displaystyle \frac{NPP(t_0+nT)}{NPP(t_0)}$.
\end{theorem}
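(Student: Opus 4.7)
The plan is to differentiate the defining expression for $\Delta\mathbf{c}_{\rho^{(0)}(r)}$, plug in the ODE that $\mathbf{c}$ satisfies on the $n$-th year, and exploit the equilibrium identity at $t_0$ to eliminate $\mathbf{c}(t_0)$ in favour of a forcing term. Since this is the no-farmyard-manure case we have $F(t_0)=0$, and the equilibrium relation (\ref{eq:cstarzero}) reduces to
\[
A\,\mathbf{c}(t_0)\;=\;-\,\frac{P(t_0)}{T\,\rho^{(0)}(r)}\,\mathbf{a}^{(g)}.
\]
This is the one algebraic fact that will do all the work.

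First I would write $\mathbf{c}(t)=\mathbf{c}(t_0)+P(t_0)\,\Delta\mathbf{c}_{\rho^{(0)}(r)}(t)$, substitute this in (\ref{eq:model}) with $f\equiv 0$ and $P(t_0+nT)=P(t_0)\,N_P^{(n)}$, and divide by $P(t_0)$. The term $\rho(t)A\mathbf{c}(t_0)/P(t_0)$ then collapses, by the displayed equilibrium identity, to $-\rho(t)/(T\rho^{(0)}(r))\,\mathbf{a}^{(g)}$, which combines with the plant-input term $N_P^{(n)}\hat g_r(t)\,\mathbf{a}^{(g)}$ to give exactly the bracketed forcing in (\ref{eq:change_c}), while $\rho(t)A\mathbf{c}(t)/P(t_0)$ contributes the $\rho(t)A\,\Delta\mathbf{c}_{\rho^{(0)}(r)}$ piece. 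This is a short, essentially mechanical computation.

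The only nontrivial point is justifying the two initial conditions $\Delta\mathbf{c}_{\rho^{(0)}(r)}(t_0)=\Delta\mathbf{c}_{\rho^{(0)}(r)}(t_0+T)=\mathbf{0}$. The first is immediate from the definition. For the second I would argue that, because $\mathbf{c}(t_0)$ is by hypothesis the equilibrium of the averaged system (\ref{eq:first_interval}) with constant coefficient $\rho^{(0)}(r)$ and constant forcing $P(t_0)/T\,\mathbf{a}^{(g)}$ (and $F(t_0)=0$), the solution of (\ref{eq:first_interval}) issued from $\mathbf{c}(t_0)$ is stationary on $[t_0,t_0+T]$; hence $\mathbf{c}(t_0+T)=\mathbf{c}(t_0)$ and the numerator of $\Delta\mathbf{c}_{\rho^{(0)}(r)}(t_0+T)$ vanishes.

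I do not expect any real obstacle: the whole statement is a change-of-variable identity whose only subtlety is the careful bookkeeping between the full dynamics (\ref{eq:model}) on $[t_0+nT,t_0+(n+1)T]$ and the averaged reference dynamics (\ref{eq:first_interval}) on $[t_0,t_0+T]$. The one thing to be careful about is not to confuse $\rho(t)$ (the time-varying modifier used from year $n=1$ onward) with $\rho^{(0)}(r)$ (the averaged modifier used to fix the initial equilibrium); keeping these two roles distinct is what makes the coefficient of $\mathbf{a}^{(g)}$ come out as $N_P^{(n)}\hat g_r(t)-\rho(t)/(T\rho^{(0)}(r))$ rather than collapsing further.
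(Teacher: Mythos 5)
Your proposal is correct and follows essentially the same route as the paper's proof: substitute $\mathbf{c}(t)=\mathbf{c}(t_0)+P(t_0)\,\Delta\mathbf{c}_{\rho^{(0)}(r)}(t)$ into (\ref{eq:model}) with $f\equiv 0$, divide by $P(t_0)$, and use the equilibrium relation (\ref{eq:cstarzero}) with $F(t_0)=0$ to turn $\rho(t)A\mathbf{c}(t_0)/P(t_0)$ into the $-\rho(t)/(T\rho^{(0)}(r))\,\mathbf{a}^{(g)}$ forcing term. Your extra remark on why $\Delta\mathbf{c}_{\rho^{(0)}(r)}(t_0+T)=\mathbf{0}$ (stationarity of the averaged system started at its equilibrium) is consistent with the paper, which simply imposes $\mathbf{c}(t_0+T)=\mathbf{c}(t_0)$ in the setup of (\ref{eq:model}).
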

\begin{proof}

\noindent In case of no farmyard manure input, by plugging the expression of $P(t_0\,+\,n T)$ into the equation for $\displaystyle \frac{d\mathbf{c}}{dt}$, the 
equation (\ref{eq:model}) becomes
% $$
% \mathbf{\dot c}\,=\, 
% \rho(t)\,  A \, \mathbf{c}\,  -T\, \rho^{(0)}(r)\,\mathds{1}^\intercal \, A \, \mathbf{c}(t_0)\, N_P^{(n)}\,\hat g_r(t) \, \mathbf{a}^{(g)}\qquad t\in[t_0+nT,\,t_0+(n+1)T].
$$
\displaystyle \frac{d\mathbf{c}}{dt}\,=\, 
\rho(t)\,  A \, \mathbf{c}\, +\, P(t_0)\, N_P^{(n)}\,\hat g_r(t) \, \mathbf{a}^{(g)},\qquad t\in[t_0+nT,\,t_0+(n+1)T].
$$
Thus,
% $$\dot \Delta \mathbf{c}_{\rho^{(0)}(r)}(t)\, =\, \frac{-1}{\rho^{(0)}(r)\,\mathds{1}^\intercal \, A \, \mathbf{c}(t_0)} \left(\rho(t)\,  A \, \mathbf{c}\,  -T\, \rho^{(0)}(r)\,\mathds{1}^\intercal \, A \, \mathbf{c}(t_0)\, N_P^{(n)}\,\hat g_r(t) \, \mathbf{a}^{(g)} \right) %$$
$$\begin{array}{rcl} 
\displaystyle \frac{d \Delta \mathbf{c}_{\rho^{(0)}(r)}}{dt}(t) &=&\displaystyle  \frac{1}{P(t_0)} \left(\rho(t)\,  A \, \mathbf{c}\,  +\, P(t_0)\, N_P^{(n)}\,\hat g_r(t) \, \mathbf{a}^{(g)} \right) \\\\
% $$=\rho(t) \, A \,\Delta \mathbf{c}_{\rho^{(0)}(r)}+
% \frac{1}{\rho^{(0)}(r)\,\mathds{1}^\intercal \, A \, \mathbf{c}(t_0)} \left( - \rho(t)\,  A \, \mathbf{c}(t_0)+T\, \rho^{(0)}(r)\,\mathds{1}^\intercal \, A \, \mathbf{c}(t_0)\, N_P^{(n)}\,\hat g_r(t) \, \mathbf{a}^{(g)} \right) $$
&=&\displaystyle \rho(t) \, A \,\Delta \mathbf{c}_{\rho^{(0)}(r)}+
\frac{1}{P(t_0)} \left( \rho(t)\,  A \, \mathbf{c}(t_0)\, +\, P(t_0)\,  N_P^{(n)}\,\hat g_r(t) \, \mathbf{a}^{(g)} \right) \\\\
% $$=\rho(t) \, A \,\Delta \mathbf{c}_{\rho^{(0)}(r)}+
%  \left( T\,  N_P^{(n)}\,\hat g_r(t) \, \mathbf{a}^{(g)} - \frac{\rho(t)\,  A \, \mathbf{c}(t_0)}{\rho^{(0)}(r)\,\mathds{1}^\intercal \, A \, \mathbf{c}(t_0)} \right) .
% $$
&=&\displaystyle \rho(t) \, A \,\Delta \mathbf{c}_{\rho^{(0)}(r)}+
 \left(   N_P^{(n)}\,\hat g_r(t) \, \mathbf{a}^{(g)} \,+\, \frac{\rho(t)\,  A \, \mathbf{c}(t_0)}{P(t_0)} \right) .
\end{array}$$
Recalling the relation between $P(t_0)$ and  $\mathbf{c}(t_0)$ in   (\ref{eq:cstarzero}) that yields   
%$$
%\mathds{1}^\intercal \, A \, %\mathbf{c}(t_0)\, %\mathbf{a}^{(g)}\, = \, A\, %\mathbf{c}(t_0),\ 
%$$
$$
\displaystyle -\frac{P(t_0)}{T\,\rho^{(0)}(r)}\, \mathbf{a}^{(g)}\, =\,  A\, \mathbf{c}(t_0), 
$$
the result follows. 
\end{proof}

\begin{corollary}\label{thm:change_s}
In case of no farmyard manure  input, the dynamics of the SOC change index 
$\Delta soc_{\rho^{(0)}(r)}(t)$
for  $t\in]t_0+nT,\,t_0+(n+1)T]$, for $n=1, 2, \dots$,
is governed by the equation
\begin{equation}\label{eq:change_s}
   \displaystyle   \frac{d \Delta soc_{\rho^{(0)}(r)}}{dt} (t)\, =\,     -\rho(t) \, \delta \,\mathbf{k}^\intercal \Delta \mathbf{c}_{\rho^{(0)}(r)}\, +\, \left( N_P^{(n)} \, \hat g_r(t)\,-\, \displaystyle \frac{\rho(t)}{T\,\rho^{(0)}(r)}  \right)
\end{equation}
where $\Delta \mathbf{c}_{\rho^{(0)}(r)}(t)$ solves (\ref{eq:change_c}) and  $\Delta soc_{\rho^{(0)}(r)}(t_0+T)\, =\, \Delta soc_{\rho^{(0)}(r)}(t_0)\,=\, 0$.
\end{corollary}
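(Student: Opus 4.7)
The plan is to derive the scalar ODE for $\Delta soc_{\rho^{(0)}(r)}$ directly from the vector ODE (\ref{eq:change_c}) by left-multiplying by $\mathds{1}^\intercal$. The starting observation is that in the no-farmyard-manure case one has $F(t_0)=0$, so Definition \ref{def:delta_SOC} combined with $soc(t)=\mathds{1}^\intercal \mathbf{c}(t)$ yields the scalar relation $\Delta soc_{\rho^{(0)}(r)}(t)=\mathds{1}^\intercal\Delta \mathbf{c}_{\rho^{(0)}(r)}(t)$. Differentiating this identity and substituting Theorem \ref{thm:forestandgrass} gives
$$
\frac{d\Delta soc_{\rho^{(0)}(r)}}{dt}(t)=\rho(t)\,\mathds{1}^\intercal A\,\Delta \mathbf{c}_{\rho^{(0)}(r)}+\left(N_P^{(n)}\hat g_r(t)-\frac{\rho(t)}{T\rho^{(0)}(r)}\right)\mathds{1}^\intercal \mathbf{a}^{(g)}.
$$

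Two algebraic identities then close the argument. The first is the column-sum identity $\mathds{1}^\intercal A=-\delta\,\mathbf{k}^\intercal$, which was already exploited in the derivation of (\ref{SOCmodel}): summing the entries of each column of $A$ and using $\delta=1-\alpha-\beta$ produces the value $-\delta k_j$ for the rate constant corresponding to that compartment. The second is $\mathds{1}^\intercal \mathbf{a}^{(g)}=\gamma+(1-\gamma)=1$, which is immediate from the construction of the plant-residue partition vector. Substituting both identities in the display above reproduces exactly (\ref{eq:change_s}).

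The initial conditions are inherited from those in Theorem \ref{thm:forestandgrass}: applying $\mathds{1}^\intercal$ to $\Delta\mathbf{c}_{\rho^{(0)}(r)}(t_0)=\Delta\mathbf{c}_{\rho^{(0)}(r)}(t_0+T)=\mathbf{0}$ yields $\Delta soc_{\rho^{(0)}(r)}(t_0)=\Delta soc_{\rho^{(0)}(r)}(t_0+T)=0$, which is consistent with the fact that during the reference interval $[t_0,t_0+T]$ the system is held at the equilibrium (\ref{eq:cstarzero}).

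I do not anticipate any serious obstacle: the proof is a routine linear-algebraic projection of a four-dimensional ODE onto its component-sum, and the two supporting identities $\mathds{1}^\intercal A=-\delta\,\mathbf{k}^\intercal$ and $\mathds{1}^\intercal \mathbf{a}^{(g)}=1$ are already available from the model setup in Section \ref{sec:1}. The only point requiring slight care is to note explicitly that the identification $\Delta soc_{\rho^{(0)}(r)}=\mathds{1}^\intercal\Delta\mathbf{c}_{\rho^{(0)}(r)}$ relies on the vanishing of $F(t_0)$, which is precisely the hypothesis of the corollary.
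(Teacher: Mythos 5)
Your proposal is correct and follows essentially the same route as the paper: left-multiply the vector equation (\ref{eq:change_c}) by $\mathds{1}^\intercal$ and invoke $\mathds{1}^\intercal A=-\delta\,\mathbf{k}^\intercal$ together with $\mathds{1}^\intercal\mathbf{a}^{(g)}=1$. Your extra remark that the identification $\Delta soc_{\rho^{(0)}(r)}=\mathds{1}^\intercal\Delta\mathbf{c}_{\rho^{(0)}(r)}$ uses $F(t_0)=0$ is a harmless (and welcome) precision left implicit in the paper.
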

\begin{proof}
The dynamics for $\Delta soc_{\rho^{(0)}(r)}(t)$ can be immediately deduced from the dynamics of $\Delta \mathbf{c}_{\rho^{(0)}(r)}(t)$ as  
$
\displaystyle   \frac{d \Delta soc_{\rho^{(0)}(r)}}{dt} (t)\, =\, \mathds{1}^\intercal \displaystyle \frac{d \Delta \mathbf{c}_{\rho^{(0)}(r)}}{dt}$ and observing that $\mathds{1}^\intercal A\, =\, -\delta\, \mathbf{k}^\intercal$. 
% =     \rho(t) \, \mathds{1}^\intercal\, A\, \Delta \mathbf{c}_{\rho^{(0)}(r)}\, +\, \left( N_P^{(n)} \, \hat g_r(t)\,-\, \displaystyle \frac{\rho(t)}{T\,\rho^{(0)}(r)}  \right).
\end{proof}

\subsection{Sensitivity of the SOC change index to parameters}

\noindent In this section, we want to study  the relative importance of the different factors responsible for change in SOC stock. This will be done throughout a sensitivity analysis of SOC change index related to the dependence  on the temperature, on NPP and on the class of land use, here restricted to  forest and grassland classes. We will make use of the direct method in  \cite{dickinson1976sensitivity} where the analysis of  sensitivity is local and described by  first-order derivatives.

\noindent In this setting, $\phi \in \RR$ is a  parameter affecting the dynamics  $\displaystyle \frac{d \mathbf{y}}{dt}\, =\,\mathbf{f}(\mathbf{y}(t,\phi),\phi)$ of the $n$ dimensional variable $\mathbf{y}(t)$. The direct method requires the integration of an additional set of differential equations, together with the original system, to obtain the vector of sensitivities
 $ \mathbf{s}_{\mathbf{y},\phi}(t)$, whose components are defined as $\displaystyle \frac{\partial y_i(t,\phi)}{\partial \phi}$, i.e. 
% then equations (\ref{eq:ODE sensitivities}) and (\ref{eq: initial value sensitivities}) can be rewritten as the following initial value problem
\begin{equation}\label{eq: compact form ODE sensitivities}
\begin{array}{l}
\displaystyle \frac{d \mathbf{y}}{dt}\, =\,\mathbf{f}(\mathbf{y}(t,\phi),\phi), \qquad  \mathbf{y}(t_0,\phi)\, =\, \mathbf{y}_0(\phi)\\\\
      \displaystyle \frac{d\mathbf{s}_{\mathbf{y},\phi}}{dt}(t,\phi)= \displaystyle \frac{\partial \mathbf{f}}{\partial \phi}(\mathbf{y}(t,\phi),\phi)+\displaystyle \frac{\partial \mathbf{f}}{\partial \mathbf{y}}(\mathbf{y}(t,\phi),\phi)\;\mathbf{s}_{\mathbf{y},\phi}(t),\\\\
     \mathbf{s}_{\mathbf{y},\phi}(t_0)=\displaystyle \frac{\partial \mathbf{y}_0(\phi)}{\partial \phi},
\end{array}
\end{equation}
where $\displaystyle \frac{\partial \mathbf{f}}{\partial \mathbf{y}}$ denotes the Jacobian matrix. 

\bigskip

\noindent In order to apply the above described direct method, we need to replace the non-autonomous dynamics described in Theorem \ref{thm:forestandgrass} and Corollary \ref{thm:change_s}, with an autonomous one. Let us come back to the equation for $\Delta \mathbf{c}_{\rho^{(0)}(r)}(t)$ in (\ref{eq:change_c}). At first, we replace $Temp(t)$ and $Acc(t)$ with their averaged  values, say $Temp^{(n)}$ and $Acc^{(n)}$, in each interval $]\,t_0\,+\,nT,\, t_0\,+\,(n+1)T\,]$ so that  $\rho(t)$ can be approximated by  
$\rho^{(n)}(r)\, :=\,k_a( Temp^{(n)})\, k_b( Acc^{(n)}) \, k_c(r)$, where $k_c(r)$  given in (\ref{eq:kc}). As  $\displaystyle \fint_{t_0\,+\,nT}^{t_0\,+\,(n+1)\,T}  \hat g_r(s)\,ds\, =\, \displaystyle \frac{1}{T}$, we define the autonomous counterpart of the model (\ref{eq:change_c}) as follows:
\begin{equation}\label{eq:odedeltacn}
\begin{array}{l}
\displaystyle \frac{d\Delta \mathbf{\overline c}_{\rho^{(0)}(r)}}{dt}\, =\,
\rho^{(n)}(r)\,  A \, \Delta \mathbf{\overline c}_{\rho^{(0)}(r)}\,+\,\vartheta^{(n)}\,  \mathbf{a}^{(g)}, \\ \\ \Delta \mathbf{\overline c}_{\rho^{(0)}(r)}(t_0\,+\,T)\,=\,\mathbf{0},
\end{array}
\end{equation}
for $t\, \in \, ]\,t_0\,+\,nT,\, t_0\,+\,(n+1)T\,]$, \, $n=1,\, 2\, \dots$,
where \footnote{Let us observe that $ \vartheta^{(n)}$ does not depend on $r$, in fact $\vartheta^{(n)}\,= \,\frac{1}{T}\left(   N_P^{(n)}\, \,-\, \frac{k_a(Temp^{(n)})\;k_b(Acc^{(n)})}{k_b(Acc^{(0)})} \right)  $. \\}\\ 
\begin{equation}\label{def:vartheta}
    \vartheta^{(n)}\,:= \,\displaystyle \frac{1}{T}\left( N_P^{(n)} \, \,-\, \displaystyle \frac{\rho^{(n)}(r)}{\rho^{(0)}(r)} \right).  
\end{equation}
% and $\rho^{(0)}(r):=k_b(Acc^{(0)})\,k_c(r)$. 
\noindent With the previous notations, we define
\begin{definition}\label{sensitivity_SOC}
The {\it sensitivity of the SOC change index to the parameter $\phi$} is defined as the sum of the entries of the vector 
 $\mathbf{s}_{\Delta \mathbf{\overline c},\phi}$, which is the sensitivity to the parameter $\phi$  of the variable $\Delta \mathbf{\overline c}_{\rho^{(0)}(r)}(t)$, whose dynamics is described in (\ref{eq:odedeltacn}).
\end{definition}

% set  $s_{\Delta \, \overline{SOC},\phi}$ as the (scalar) sensitivity to the parameter $\phi$ of SOC change index  $\displaystyle   \Delta \overline{SOC}_{\rho^{(0)}(r)}(t)$ with temporal evolution in (\ref{eq:change_s}).  
% $\mathbf{s}_{\Delta \mathbf{c},\phi}\in \RR^{4}$ for the sensitivity of $\Delta \mathbf{c}_{\rho^{(0)}(r)}$ to the parameter $\phi$,
% we have that

% \begin{proof}
% Consider that 
% $$s_{\Delta SOC,\phi} = \frac{\partial \Delta SOC_{\rho^{(0)}(r)}}{\partial \phi}= \frac{\partial \mathds{1}^\intercal \Delta \mathbf{c}_{\rho^{(0)}(r)} }{\partial \phi}= \mathds{1}^\intercal \frac{\partial  \Delta \mathbf{c}_{\rho^{(0)}(r)} }{\partial \phi}= \mathds{1}^\intercal \mathbf{s}_{\Delta \mathbf{c},\phi}.
% $$
% \end{proof}
\noindent In the following we are going to analyze the sensitivity  of SOC change index to three different parameters: $Temp^{(1)}$ representing the annual averaged temperature, $N_P^{(1)}:=NPP(t_0+T)/NPP(t_0)$ representing the NPP input normalized by the value at the reference year, and $r$ related to change of land use, from forest (lowest values of $r$) to arable (highest value of $r$).

 \noindent Before proceeding we provide the following result useful for the sensitivity analysis of the SOC change index to parameters $Temp^{(1)}$ and $r$ in the time interval $]\,t_0\,+\,T,\, t_0\,+\,2\,T\,]$. 
% Let us observe that $\vartheta^{(n)}$ does not depend on $r$, in fact $\vartheta^{(n)}\,= \,\displaystyle \frac{1}{T}\left( N_P^{(n)} \, \,-\, \displaystyle \frac{k_a(Temp^{(n)})\;k_b(Acc^{(n)})}{k_b(Acc^{(0)})} \right)  $. \\
\begin{theorem}\label{thm:delta c function n=1}
The solution of the initial value problem (\ref{eq:odedeltacn}) in the time interval  $]t_0\,+\,T,\, t_0\,+\,2\,T\,]$ is given by 
% \begin{equation}\label{eq:ode delta c, n=1}
% \begin{array}{l}
% \mathbf{\dot\Delta \mathbf{c}_{\rho^{(0)}}}\, =\,
% \rho^{(1)}(r)\,  A \, \mathbf{\Delta \mathbf{c}_{\rho^{(0)}}}\,+\,\vartheta^{(1)}\,  \mathbf{a}^{(g)}, \\ \\ \Delta \mathbf{c}_{\rho^{(0)}}(t_0\,+\,T)\,=\,\mathbf{0},
% \end{array}
% \end{equation}
% for $t\, \in \, ]\,t_0\,+\,T,\, t_0\,+\,2T\,]$, 
% is
\begin{equation}\label{eq:delta c function n=1}
  \Delta \mathbf{\overline c}_{\rho^{(0)}(r)}(t)\,=\,(t-t_0-T)\,\vartheta^{(1)}\,\varphi\left( (t-t_0-T)\,\rho^{(1)}(r)\, A\,\right)\mathbf{a}^{(g)},
\end{equation}
where $\varphi(z):=z^{-1}(e^z-1)$.
% Moreover, 
% %there exists $\epsilon>0$ such that for all $t\,\in\,[\,t_0\,+\,T\,,\,t_0\,+\,T\,+\,\epsilon\,]$ 
% $$\Delta  \mathbf{c}_{\rho^{(0)}}(t)=(t-t_0-T)\,\vartheta (Temp^{(1)})\,  \mathbf{a}^{(g)}+\mathcal O((t-t_0-T)^2)\mathbf{a}^{(g)}$$
% for $t \rightarrow t_0+T$.
\end{theorem}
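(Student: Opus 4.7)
The key observation is that on the single interval $t \in \,]t_0+T, t_0+2T]$ (the case $n=1$), the coefficient matrix $\rho^{(1)}(r)A$ and the forcing $\vartheta^{(1)}\mathbf{a}^{(g)}$ in the ODE (\ref{eq:odedeltacn}) are both \emph{constant} in time, so the problem reduces to an inhomogeneous linear ODE with constant coefficients and a constant source term, with zero initial data at $t_0+T$. My plan is simply to apply the variation-of-constants (Duhamel) formula and then rewrite the resulting integral in terms of $\varphi$.

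Concretely, I would first write the solution as
\begin{equation*}
\Delta \mathbf{\overline c}_{\rho^{(0)}(r)}(t) = \int_{t_0+T}^{t} e^{(t-s)\,\rho^{(1)}(r) A}\,\vartheta^{(1)}\mathbf{a}^{(g)}\,ds,
\end{equation*}
which is the unique solution of (\ref{eq:odedeltacn}) with $\Delta\mathbf{\overline c}_{\rho^{(0)}(r)}(t_0+T)=\mathbf{0}$ when the coefficients are constant. Then I would perform the change of variable $u = t - s$ to obtain
\begin{equation*}
\Delta \mathbf{\overline c}_{\rho^{(0)}(r)}(t) = \vartheta^{(1)}\left(\int_0^{t-t_0-T} e^{u\,\rho^{(1)}(r) A}\,du\right)\mathbf{a}^{(g)}.
\end{equation*}

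The final step is the matrix identity $\int_0^\tau e^{u M}\,du = \tau\,\varphi(\tau M)$ for any square matrix $M$, where $\varphi(z) = z^{-1}(e^z - 1)$; this is immediate from the convergent power series $\varphi(M) = \sum_{k\geq 0} M^k/(k+1)!$ and termwise integration, and it holds whether or not $M$ is invertible. Applying it with $M = \rho^{(1)}(r)A$ and $\tau = t-t_0-T$ yields precisely (\ref{eq:delta c function n=1}).

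There is no genuine obstacle here; the proof is essentially a direct computation. The only point worth being slightly careful about is the interpretation of $\varphi$ at a matrix argument — I would flag the power-series definition so that the identity is justified without invoking invertibility of $A$. As an optional sanity check, one may verify by differentiating (\ref{eq:delta c function n=1}) in $t$ that the resulting expression satisfies the ODE and vanishes at $t=t_0+T$, thereby matching the initial value problem by uniqueness.
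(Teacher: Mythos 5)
Your proof is correct and takes essentially the same route as the paper: variation of constants for the constant-coefficient, constant-forcing linear system on $]t_0+T,\,t_0+2T]$ with zero initial data, followed by identifying the resulting integral with $\varphi$. The only minor difference is in the last step — the paper evaluates the integral explicitly via $A^{-1}$ and the commutativity of $A^{-1}$ with the matrix exponential, while you use the power-series identity $\int_0^\tau e^{uM}\,du=\tau\,\varphi(\tau M)$, which avoids invoking invertibility of $A$ (harmless here, since $A$ is nonsingular).
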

\begin{proof}
% In the following we will suppress the arguments of the functions for the sake of simplicity (i.e. we will write $\rho$ for $\rho^{(1)}(r)$ and $\vartheta$ for $\vartheta^{(1)}$).
Since in each interval  equation (\ref{eq:odedeltacn}) 
corresponds to  an autonomous, non homogeneous and linear differential system, the initial value problem in correspondence of $n=1$, has a unique solution given by
$$\begin{array}{lcl}
\Delta \mathbf{\overline c}_{\rho^{(0)}(r)}(t) &=&e^{\rho^{(1)}(r) A(t-(t_0+T))}\Delta \mathbf{\overline c}_{\rho^{(0)}(r)}(t_0+T)\\\\
&+&e^{\rho^{(1)}(r) A(t-(t_0+T))}\displaystyle\int_{t_0+T}^{t} e^{-\rho^{(1)}(r) A\tau}\,\vartheta^{(1)}\, \mathbf{a}^{(g)} d\tau \\\\
&=&\vartheta^{(1)}\, \,e^{\rho^{(1)}(r) A(t-t_0-T)}\,\displaystyle\left(\int_{t_0+T}^{t}\!\!\!e^{-\rho^{(1)}(r) A\tau}d\tau\right)\mathbf{a}^{(g)}  \\\\
%&=&\vartheta e^{\rho^{(1)}(r) A(t-t_0-T))}\displaystyle\left(\int_{t_0+T}^{t}\sum_{j=0}^{\infty} \displaystyle \frac{(-1)^j \rho^{(1)}(r)^j A^j\tau^j}{j!}    d\tau\right)\mathbf{a}^{(g)} \\\\
%&=&\vartheta e^{\rho^{(1)}(r) A(t-t_0-T)}\displaystyle\sum_{j=0}^{\infty}\displaystyle \frac{(-1)^j \rho^{(1)}(r)^j A^j}{j!} \left(\int_{t_0+T}^{t}   \tau^j d\tau\right)\mathbf{a}^{(g)} \\\\
%&=&\vartheta e^{\rho^{(1)}(r) A(t-t_0-T)}\displaystyle\sum_{j=0}^{\infty}\displaystyle \frac{(-1)^j \rho^{(1)}(r)^j A^j(t-t_0-T)^{j+1}}{(j+1)!}\mathbf{a}^{(g)} \\\\
&=&\vartheta^{(1)} \,e^{\rho^{(1)}(r) A(t-t_0-T)}\displaystyle \frac{A^{-1}}{\rho^{(1)}(r)}\left(I-e^{-\rho^{(1)}(r) A(t-t_0-T)}\right)\mathbf{a}^{(g)}.
\end{array}
$$
 By observing that the matrices $e^{\rho^{(1)}(r) A(t-t_0-T)}$ and $A^{-1}$ commute, we have that
$$
      \Delta \mathbf{\overline c}_{\rho^{(0)}(r)}(t)=\vartheta^{(1)}\,\displaystyle \frac{A^{-1}}{\rho^{(1)}(r)}\left(e^{\rho^{(1)}(r) A(t-t_0-T)}-I \right)\mathbf{a}^{(g)} .
$$

%The proof is completed by observing that 
%$$\Delta \mathbf{c}_{\rho^{(0)}(r)}(t)=(t-t_0-T)\, \vartheta^{(1)}\, \varphi\left(\rho^{(1)}(r)\,A\,(t-t_0-T)\right) \mathbf{a}^{(g)},
%$$ 
%where $\varphi(z)=z^{-1}(e^z-1),$ and by recalling Theorem 6 in \cite{diele2021non}. 
\end{proof}

\subsection{Sensitivity of the SOC change index to the parameter $Temp^{(1)}$ }\label{subsection:sensitivity to Temp}
Accordingly to Definition \ref{sensitivity_SOC}, we define   the sensitivity of the SOC change index  to $Temp^{(1)}$
 the quantity $s_{\Delta soc,Temp^{(1)}}:=  \mathds{1}^\intercal \mathbf{s}_{\Delta \mathbf{\overline c},Temp^{(1)}}$. The following theorem holds.
%  $$s_{\Delta SOC,Temp^{(n)}}:=\displaystyle \frac{\partial \Delta SOC_{\rho^{(0)}(r)}}{\partial Temp^{(n)}}$$
%  and the sensitivity of $\Delta \mathbf{c}_{\rho^{(0)}(r)}$ to $Temp^{(n)}$
%  $$\mathbf{s}_{\Delta \mathbf{c},Temp^{(n)}}:=
%  \frac{\partial \Delta \mathbf{c}_{\rho^{(0)}(r)}}{\partial Temp^{(n)}}.
%  $$
 %%%%%%%%%%%%%%%%%%%%%%%%%%%%%%
 %Teorema [Sensitività Temp]
 \begin{theorem}\label{thm: sensitivity to Temp}
 The sensitivity of the SOC change index   to $Temp^{(1)}$ satisfies the following differential equation
 \begin{equation}\label{eq:ode delta SOC sensitivity to Temp^n}
\begin{array}{ll}
    \displaystyle \frac{d s_{\Delta soc,Temp^{(1)}}}{dt} &=  -\rho^{(1)}(r)\delta\, \mathbf{k}^\intercal \, \mathbf{s}_{\Delta \mathbf{\overline c},Temp^{(1)}}\\\\
&-\displaystyle \frac{\partial\rho^{(1)}(r)}{\partial Temp^{(1)}}\left(\delta \,\mathbf{k}^\intercal \Delta \mathbf{\overline c}_{\rho^{(0)}(r)}+\displaystyle \frac{1}{T\rho^{(0)}(r)} \right)
\end{array}
\end{equation}
for $t\,\in\,]\,t_0+\,T \,,\,t_0+2\,T\,]$, with the initial condition
%$$
%\begin{array}{rcl}
%   s_{\Delta SOC,Temp^{(1)}}(t_0+T)  &:=& 0 \\\\
%    s_{\Delta SOC,Temp^{(n)}}(t_0+nT) & := &s_{\Delta SOC,Temp^{(n-1)}}(t_0+nT), \qquad n\geq 2
%\end{array}
% $$
$$s_{\Delta soc,Temp^{(1)}}(t_0+T) = 0.$$
Moreover, there exists an $\epsilon>0$ such that for all $t\,\in \,[ \,t_0+T,t_0+T+\epsilon\,]$
$$s_{\Delta soc,Temp^{(1)}}(t)\leq 0.$$ 
%the sensitivity of $\Delta SOC_{\rho^{(0)}(r)}$ to $Temp^{(1)}$ is a negative function of time $t$, for sufficiently small values of $t$, i.e. 
 \end{theorem}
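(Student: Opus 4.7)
The plan is to apply the direct sensitivity method of \eqref{eq: compact form ODE sensitivities} to the autonomous system \eqref{eq:odedeltacn}, specialized to $n=1$ and to the parameter $\phi = Temp^{(1)}$, and then project the resulting vector ODE onto $\mathds{1}^\intercal$ so as to convert it into a scalar equation for $s_{\Delta soc, Temp^{(1)}}$. The only quantities in the right-hand side of \eqref{eq:odedeltacn} that depend on $Temp^{(1)}$ are $\rho^{(1)}(r) = k_a(Temp^{(1)})\,k_b(Acc^{(1)})\,k_c(r)$ and $\vartheta^{(1)}$, which via \eqref{def:vartheta} satisfies
\begin{equation*}
\frac{\partial \vartheta^{(1)}}{\partial Temp^{(1)}} \;=\; -\,\frac{1}{T\,\rho^{(0)}(r)}\,\frac{\partial \rho^{(1)}(r)}{\partial Temp^{(1)}}.
\end{equation*}
Differentiating \eqref{eq:odedeltacn} with respect to $Temp^{(1)}$ therefore gives, for $t \in \,]\,t_0+T,\,t_0+2T\,]$,
\begin{equation*}
\frac{d\,\mathbf{s}_{\Delta \mathbf{\overline c}, Temp^{(1)}}}{dt} \;=\; \rho^{(1)}(r)\,A\,\mathbf{s}_{\Delta \mathbf{\overline c}, Temp^{(1)}} \;+\; \frac{\partial \rho^{(1)}(r)}{\partial Temp^{(1)}}\,A\,\Delta \mathbf{\overline c}_{\rho^{(0)}(r)} \;+\; \frac{\partial \vartheta^{(1)}}{\partial Temp^{(1)}}\,\mathbf{a}^{(g)},
\end{equation*}
and, since $\Delta \mathbf{\overline c}_{\rho^{(0)}(r)}(t_0+T)=\mathbf{0}$ does not depend on $Temp^{(1)}$, the initial condition is $\mathbf{s}_{\Delta \mathbf{\overline c}, Temp^{(1)}}(t_0+T)=\mathbf{0}$.

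Next, I would left-multiply by $\mathds{1}^\intercal$ and exploit the two identities already used in the proof of Corollary \ref{thm:change_s}: $\mathds{1}^\intercal A = -\delta\,\mathbf{k}^\intercal$ and $\mathds{1}^\intercal \mathbf{a}^{(g)} = \gamma + (1-\gamma) = 1$. Substituting also the expression of $\partial \vartheta^{(1)}/\partial Temp^{(1)}$ gives exactly \eqref{eq:ode delta SOC sensitivity to Temp^n}, with the initial condition $s_{\Delta soc, Temp^{(1)}}(t_0+T) = \mathds{1}^\intercal\,\mathbf{s}_{\Delta \mathbf{\overline c}, Temp^{(1)}}(t_0+T) = 0$.

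For the sign statement, the key observation is that at $t=t_0+T$ both $\Delta \mathbf{\overline c}_{\rho^{(0)}(r)}$ and $\mathbf{s}_{\Delta \mathbf{\overline c}, Temp^{(1)}}$ vanish, so evaluating \eqref{eq:ode delta SOC sensitivity to Temp^n} at that time yields
\begin{equation*}
\left.\frac{d s_{\Delta soc, Temp^{(1)}}}{dt}\right|_{t=t_0+T} \;=\; -\,\frac{1}{T\,\rho^{(0)}(r)}\,\frac{\partial \rho^{(1)}(r)}{\partial Temp^{(1)}}.
\end{equation*}
It then suffices to show that $\partial \rho^{(1)}(r)/\partial Temp^{(1)} > 0$. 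Since $k_b(Acc^{(1)})$ and $k_c(r)$ are positive and do not depend on $Temp^{(1)}$, I reduce to checking that $k_a$ is strictly increasing in $Temp^{(1)}$. Writing $k_a(Temp^{(1)}) = 47.91/(1+e^{106.06/(Temp^{(1)}+c)})$ with the constant $c := 106.06/\log(46.91) - Temp^{(0)}$, one verifies that the exponent $106.06/(Temp^{(1)}+c)$ is strictly decreasing in $Temp^{(1)}$ (on the physically meaningful range where $Temp^{(1)}+c>0$), hence $k_a$ is strictly increasing. Consequently the derivative at $t_0+T$ is strictly negative; combined with $s_{\Delta soc, Temp^{(1)}}(t_0+T)=0$ and continuity of $ds_{\Delta soc, Temp^{(1)}}/dt$, there exists $\epsilon>0$ such that the derivative remains negative on $[t_0+T, t_0+T+\epsilon]$, whence $s_{\Delta soc, Temp^{(1)}}(t)\le 0$ on this interval.

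The main obstacle I anticipate is the algebraic bookkeeping in verifying the monotonicity of $k_a$ on the relevant temperature range and in checking that the denominator $Temp^{(1)}+c$ stays positive; once that is settled, the rest is a direct differentiation plus the $\mathds{1}^\intercal$ projection already employed in the preceding results.
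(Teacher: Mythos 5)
Your derivation of the sensitivity ODE and its initial condition is exactly the paper's argument: apply the direct method \eqref{eq: compact form ODE sensitivities} to \eqref{eq:odedeltacn} with $n=1$, use $\partial\vartheta^{(1)}/\partial Temp^{(1)}=-\frac{1}{T\rho^{(0)}(r)}\,\partial\rho^{(1)}(r)/\partial Temp^{(1)}$, and project with $\mathds{1}^\intercal$ via $\mathds{1}^\intercal A=-\delta\mathbf{k}^\intercal$, $\mathds{1}^\intercal\mathbf{a}^{(g)}=1$. For the sign statement, however, you take a genuinely different and more elementary route. The paper substitutes the explicit solution of \eqref{eq:odedeltacn} from Theorem \ref{thm:delta c function n=1}, introduces $\psi(t)=\mathbf{k}^\intercal\varphi\left(\rho^{(1)}(r)A(t-t_0-T)\right)\mathbf{a}^{(g)}$, bounds $\mathbf{k}^\intercal\mathbf{s}_{\Delta\mathbf{\overline c},Temp^{(1)}}$ by $k_{min}\mathds{1}^\intercal\mathbf{s}_{\Delta\mathbf{\overline c},Temp^{(1)}}$ and $(t-t_0-T)\psi(t)$ by $1/(\delta\rho^{(1)}(r))$, and closes with a comparison argument against the trivial Cauchy problem $x'=-\rho^{(1)}(r)\delta k_{min}x$, $x(t_0+T)=0$. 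You instead simply evaluate the scalar ODE at $t=t_0+T$, where both $\Delta\mathbf{\overline c}_{\rho^{(0)}(r)}$ and $\mathbf{s}_{\Delta\mathbf{\overline c},Temp^{(1)}}$ vanish, obtain the strictly negative right derivative $-\frac{1}{T\rho^{(0)}(r)}\,\partial\rho^{(1)}(r)/\partial Temp^{(1)}$ (with positivity of $\partial\rho^{(1)}(r)/\partial Temp^{(1)}$ argued through the monotonicity of $k_a$, which the paper only asserts in a footnote), and conclude by continuity of the right-hand side that the derivative stays negative on a small interval, hence $s_{\Delta soc,Temp^{(1)}}\le 0$ there. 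Since the theorem's conclusion is only local in time, your argument buys the same result with far less machinery (no explicit $\varphi$-form of the solution, no $k_{min}$ bound, no comparison lemma), at the cost of not producing the intermediate quantitative bounds the paper's proof generates along the way; the only point to state carefully is that the derivative at $t_0+T$ is the right-hand derivative, justified by continuity of $\mathbf{s}_{\Delta\mathbf{\overline c},Temp^{(1)}}$ and $\Delta\mathbf{\overline c}_{\rho^{(0)}(r)}$ up to the left endpoint.
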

 \begin{proof}
Since the sensitivity of $\Delta soc_{\rho^{(0)}(r)}$ to $Temp^{(1)}$ is defined as $s_{\Delta soc,Temp^{(1)}}= \mathds{1}^\intercal \mathbf{s}_{\Delta \mathbf{\overline c},Temp^{(1)}}  $, let us begin by obtaining the initial value problem for $\mathbf{s}_{\Delta \mathbf{\overline c},Temp^{(1)}}$. According to equations (\ref{eq: compact form ODE sensitivities}), applied to equations (\ref{eq:odedeltacn}) for $t\,\in\,]\,t_0+\,T \,,\,t_0+2\,T\,]$ (i.e. $n=1$), we have that
\begin{equation}\label{eq:ivp sens delta c to temp}
\begin{array}{l}
\begin{array}{ccl}
\displaystyle \frac{d\mathbf{s}_{\Delta \mathbf{\overline c},Temp^{(1)}} }{dt}&=&\rho^{(1)}(r)\, A\, \mathbf{s}_{\Delta \mathbf{\overline c},Temp^{(1)}}\\
&+&
\displaystyle \frac{\partial}{\partial Temp^{(1)}}\left(\rho^{(1)}(r)\,  A \, \Delta \mathbf{\overline c}_{\rho^{(0)}(r)}\,+\,\vartheta^{(1)}\,  \mathbf{a}^{(g)} \right), 
\end{array}\\\\
\mathbf{s}_{\Delta \mathbf{\overline c},Temp^{(1)}}(t_0+T)=\displaystyle \frac{\partial \Delta\mathbf{\overline c}_{\rho^{(0)}(r)}(t_0+T)}{\partial Temp^{(1)}}=\mathbf{0},
\end{array}
\end{equation}
where\\

$\displaystyle \frac{\partial}{\partial Temp^{(1)}} \left(\rho^{(1)}(r)\,  A \, \Delta \mathbf{\overline c}_{\rho^{(0)}(r)}\,+\,\vartheta^{(1)}\,  \mathbf{a}^{(g)} \right)\;=\;$
$$\begin{array}{ll}
\qquad\qquad\;&=\;  \displaystyle \frac{\partial \rho^{(1)}(r) }{\partial Temp^{(1)}} A \, \Delta \mathbf{\overline c}_{\rho^{(0)}(r)}\,+\, \displaystyle \frac{\partial \vartheta^{(1)}}{\partial Temp^{(1)}} \mathbf{a}^{(g)}\\\\
&=\; \displaystyle \frac{\partial \rho^{(1)}(r) }{\partial Temp^{(1)}}\left(  A \, \Delta \mathbf{\overline c}_{\rho^{(0)}(r)}\,-\displaystyle \frac{\mathbf{a}^{(g)}}{T\rho^{(0)}(r)} \right).
\end{array}$$
Thus, for all $t\,\in\,]\,t_0+\,T \,,\,t_0+2\,T\,]$
$$
\begin{array}{l}
\displaystyle \frac{d\mathbf{s}_{\Delta \mathbf{\overline c},Temp^{(1)}} }{dt}= \rho^{(1)}(r) A \mathbf{s}_{\Delta \mathbf{\overline c},Temp^{(1)}}+
\displaystyle \frac{\partial \rho^{(1)}(r) }{\partial Temp^{(1)}}\left(  A \, \Delta \mathbf{\overline c}_{\rho^{(0)}(r)}\,-\displaystyle \frac{\mathbf{a}^{(g)}}{T\rho^{(0)}(r)} \right).
\end{array}
$$
By multiplying both sides of the previous equation by $\mathds{1}^\intercal,$ and by recalling that $\mathds{1}^\intercal A = -\delta \,\mathbf{k}^\intercal,$ and $ \mathds{1}^\intercal \mathbf{a}^{(g)}=1,$ equation (\ref{eq:ode delta SOC sensitivity to Temp^n}) is proved.

\bigskip 
\noindent For proving the second part of the statement, let us consider the expression of $\Delta \mathbf{\overline c}_{\rho^{(0)}(r)}(t) $ obtained in  Theorem \ref{thm:delta c function n=1}. \\By setting
$\psi(t):=\mathbf{k}^\intercal\,\varphi\left( \rho^{(1)}(r) A\,(t-t_0-T)\right)\mathbf{a}^{(g)}$, we have that 
$$
\mathbf{k}^\intercal \Delta \mathbf{\overline c}_{\rho^{(0)}(r)}(t) = 
\,(t-t_0-T)\, \psi(t)\, \vartheta^{(1)},
$$
% evaluated at $t=t_0+T$, gives the positive value $\mathbf{k}^\intercal \mathbf{a}^{(g)}$, we deduce that there exists an $\epsilon>0$ such that %$\mathbf{k}^\intercal\,\varphi\left( \rho^{(1)}(r) A\,(t-t_0-T)\right)\mathbf{a}^{(g)}$ is positive  
% $\psi(t)>0$ for all $t\in]t_0+T, t_0+T+\epsilon]$. Consequently, 
% the sign of the function $\mathbf{k}^\intercal \Delta \mathbf{c}_{\rho^{(0)}(r)}(t)$ is the same as the sign of $\vartheta^{(1)}$ for all $t\,\in\, ]\,t_0+T,t_0+T+\epsilon\,]$.
% Thus,
\noindent and, by replacing $\vartheta^{(1)}$ with Definition \ref{def:vartheta}, equation (\ref{eq:ode delta SOC sensitivity to Temp^n}) becomes
$$\begin{array}{ll}
\displaystyle \frac{d s_{\Delta soc,Temp^{(1)}}}{dt} &= -\rho^{(1)}(r)\, \delta \,\mathbf{k}^\intercal \mathbf{s}_{\Delta \mathbf{\overline c},Temp^{(1)}}\\\\
\qquad&-
 \displaystyle \frac{\partial\rho^{(1)}(r)}{\partial Temp^{(1)}}\!\left[
 \displaystyle \frac{ \delta (t-t_0-T) \psi(t)}{T}\left( N_P^{(1)}- \displaystyle \frac{\rho^{(1)}(r)}{\rho^{(0)}(r)}\right)\!+  \displaystyle \frac{1}{T\rho^{(0)}(r)} \right].
\end{array}
$$
Consider that $\psi(t_0+T)= \mathbf{k}^\intercal \mathbf{a}^{(g)} \, >0$, then, by continuity,
 there exists an $\epsilon>0$ such that %$\mathbf{k}^\intercal\,\varphi\left( \rho^{(1)}(r) A\,(t-t_0-T)\right)\mathbf{a}^{(g)}$ is positive  
$\psi(t)>0$ for all $t\in]t_0+T, t_0+T+\epsilon]$.
By defining $k_{min}:=\min_i{\mathbf{k}_i}$, then 
 $\mathbf{k}^\intercal \mathbf{s}_{\Delta \mathbf{\overline c},Temp^{(1)}}\leq k_{min}\mathds{1}^\intercal \mathbf{s}_{\Delta \mathbf{\overline c},Temp^{(1)}}$, and $\delta (t-t_0-T)\, \psi(t)\,N_P^{(1)}>0$ for all $t\in]\,t_0+T,t_0+T+\bar \epsilon\,]$ It follows that
$$\begin{array}{rcl}
\displaystyle \frac{d s_{\Delta soc,Temp^{(1)}}}{dt} &\leq & -\rho^{(1)}(r)\, \delta \,k_{min}\mathds{1}^\intercal \mathbf{s}_{\Delta \mathbf{\overline c},Temp^{(1)}}\\\\
&-&\,
 \displaystyle \frac{\partial\rho^{(1)}(r)}{\partial Temp^{(1)}} \displaystyle \frac{1- \delta\, (t-t_0-T) \, \psi(t)\, \rho^{(1)}(r) }{T\rho^{(0)}(r)}
\end{array}
$$
for all $t\in]\,t_0+T,t_0+T+\epsilon\,]$.
% Now, since $\frac{\partial\rho^{(1)}(r)}{\partial Temp^{(1)}}>0$ \textcolor{cyan}{Perchè? L'abbiamo provato ? Lo diamo per vero perche' è semplice? (FASMA)}, we want that $1- \delta (t-t_0-T) \psi(t)\rho\geq 0$, i.e.
% \begin{equation}\label{eq: condition on t}
%     (t-t_0-T) \psi(t) \leq \displaystyle \frac{1}{\delta \, \rho}
% \end{equation}
\noindent By continuity,  the function $(t-t_0-T) \, \psi(t) $ is positive for all $t\in]\,t_0+T,t_0+T+ \bar \epsilon\,]$ and it is equal to zero at $t=t_0+T$. Since $\displaystyle \frac{1}{\delta \, \rho^{(1)}(r)}>0$, there exists an $\epsilon>0$ such that
$(t-t_0-T) \psi(t) \leq \displaystyle \frac{1}{\delta \, \rho^{(1)}(r)}$ for all $t\in]\,t_0+T,t_0+T+ \epsilon\,]$. Thus, exploiting the positivity\footnote{  $\frac{\partial\rho^{(1)}(r)} {\partial Temp^{(1)}} \,=\,\frac{106.06}{47.91} (k_a(Temp^{(1)}))^2 k_b(Acc^{(1)})\,k_c(r) \frac{e^{\frac{106.06}{Temp^{(1)}+\frac{106.06}{log(46.91)} -Temp^{(0)} } }}{(Temp^{(1)}+\frac{106.06}{log(46.91)} -Temp^{(0)})^2}>0.$} of $\displaystyle \frac{\partial\rho^{(1)}(r)}{\partial Temp^{(1)}}$, we have that 
$$\begin{array}{l}
\displaystyle \frac{d s_{\Delta soc,Temp^{(1)}}}{dt} \leq -\rho^{(1)}(r)\, \delta \,k_{min}\,\mathds{1}^\intercal \mathbf{s}_{\Delta \mathbf{\overline c},Temp^{(1)}},\quad \forall t\in]\,t_0+T,t_0+T+ \epsilon\,]\\\\
s_{\Delta soc,Temp^{(1)}}(t_0+T)=0.
\end{array}
$$
\noindent The solution of the Cauchy problem
$\displaystyle \frac{dx}{dt}=-\rho^{(1)}(r)\,\delta\,k_{min}\, x$, with $ x(t_0+T)=0$, is the function $x(t)\equiv 0$, for all $t\,\in \,[\,t_0+T,t_0+T+\epsilon\,]$.
Since $s_{\Delta soc,Temp^{(1)}}(t_0+T)\leq x(t_0+T)\, =0 $, we have that $s_{\Delta soc,Temp^{(1)}}\leq x(t)=0,$ for all $t\,\in \,[\,t_0+T,t_0+T+\epsilon\,]$.\\
 \end{proof}

\begin{remark}\label{remark: temp1}
For sufficiently small values of $t$, the sensitivity  of SOC change index to $Temp^{(1)}$ is a negative function of time. Consequently, an initial increase in annual averaged temperature  $Temp^{(1)}$ decreases the null initial  value of $\Delta soc_{\rho^{(0)}(r)}$. Recalling that the sign of the index $\Delta soc_{\rho^{(0)}(r)}(t)$ detects if at the time $t$ the sum of  soil carbon contained in  compartments is greater than its initial value, we conclude that an initial increase in annual averaged temperature  $Temp^{(1)}$
has a negative effect on the achievement of land degradation neutrality. 
\end{remark}
\subsection{Sensitivity of the SOC change index to the $N_P^{(1)}$ ratio}\label{subsection:sensitivity to N}
According to Definition \ref{sensitivity_SOC}, the sensitivity  of SOC change index  to $N_P^{(1)}$
is given by $s_{\Delta soc,N_P^{(1)}}:=  \mathds{1}^\intercal \mathbf{s}_{\Delta \mathbf{\overline c},N_P^{(1)}}$. The following theorem holds.
% Let us define the sensitivity of $\Delta SOC_{\rho^{(0)}(r)}$ to $N(n):=\displaystyle \frac{NPP(t_0+nT)}{NPP(t_0)}$
%  $$s_{\Delta SOC,N(n)}:=\displaystyle \frac{\partial \Delta SOC_{\rho^{(0)}(r)}}{\partial N(n)}$$
%  and the sensitivity of $\Delta \mathbf{c}_{\rho^{(0)}(r)}$ to $N(n)$
%  $$\mathbf{s}_{\Delta \mathbf{c},N(n)}:=
%  \frac{\partial \Delta \mathbf{c}_{\rho^{(0)}(r)}}{\partial N(n)}.
%  $$
 % Theorem [Sensitività NPP(t_0+nT)/NPP(t_0)]
\begin{theorem}\label{thm: sensitivity to N}
 The sensitivity of the SOC change index  to  $N_P^{(1)}$ satisfies the following initial value problem
 \begin{equation}\label{eq:ode delta SOC sensitivity to N}
\begin{array}{l}\displaystyle \frac{d s_{\Delta soc,N_P^{(1)}}}{dt}  = -\rho^{(1)}(r)\, \delta\, \mathbf{k}^\intercal\, \mathbf{s}_{\Delta \mathbf{\overline c},N_P^{(1)}}+\displaystyle \frac{1}{T},\quad t\,\in\,]\,t_0+T ,\,t_0+2\,T\,]
\\\\
 s_{\Delta soc,N_P^{(1)}}(t_0+T)=0.
\end{array}
\end{equation}
Moreover, $s_{\Delta soc}(t)\geq 0$ for all $t\,\in \,[\,t_0+T,t_0+2T]$. 
 \end{theorem}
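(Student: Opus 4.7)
The plan is to mirror the derivation of Theorem \ref{thm: sensitivity to Temp}, but with respect to the new parameter $N_P^{(1)}$, and then to establish nonnegativity through an explicit representation of the solution rather than via a comparison argument (which was needed in the temperature case because of the ambiguous sign of $\vartheta^{(1)}$).

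First, I would apply the direct sensitivity system (\ref{eq: compact form ODE sensitivities}) to the autonomous model (\ref{eq:odedeltacn}) with $n=1$ and $\phi=N_P^{(1)}$. Since $\rho^{(1)}(r)$ depends only on $Temp^{(1)}$, $Acc^{(1)}$ and $r$, it is independent of $N_P^{(1)}$; the only contribution comes from $\vartheta^{(1)}$, and from the definition (\ref{def:vartheta}) one has $\partial \vartheta^{(1)}/\partial N_P^{(1)} = 1/T$. This gives the linear IVP
\begin{equation*}
\frac{d \mathbf{s}_{\Delta \mathbf{\overline c},N_P^{(1)}}}{dt} = \rho^{(1)}(r)\,A\,\mathbf{s}_{\Delta \mathbf{\overline c},N_P^{(1)}} + \frac{1}{T}\,\mathbf{a}^{(g)}, \qquad \mathbf{s}_{\Delta \mathbf{\overline c},N_P^{(1)}}(t_0+T)=\mathbf{0}.
\end{equation*}
Left-multiplying by $\mathds{1}^\intercal$ and using the two identities $\mathds{1}^\intercal A = -\delta\,\mathbf{k}^\intercal$ and $\mathds{1}^\intercal \mathbf{a}^{(g)} = 1$ employed in the proof of Theorem \ref{thm: sensitivity to Temp}, I obtain exactly (\ref{eq:ode delta SOC sensitivity to N}), together with the initial condition $s_{\Delta soc,N_P^{(1)}}(t_0+T)=0$ inherited from $\Delta\mathbf{\overline c}_{\rho^{(0)}(r)}(t_0+T)=\mathbf{0}$.

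For the nonnegativity claim, I would write down the unique solution of the vector IVP by variation of constants, as in the proof of Theorem \ref{thm:delta c function n=1}:
\begin{equation*}
\mathbf{s}_{\Delta \mathbf{\overline c},N_P^{(1)}}(t) = \frac{1}{T}\int_{t_0+T}^{t} e^{\rho^{(1)}(r)\,A\,(t-\tau)}\,\mathbf{a}^{(g)}\,d\tau,
\end{equation*}
so that
\begin{equation*}
s_{\Delta soc,N_P^{(1)}}(t) = \frac{1}{T}\int_{t_0+T}^{t} \mathds{1}^\intercal e^{\rho^{(1)}(r)\,A\,(t-\tau)}\,\mathbf{a}^{(g)}\,d\tau.
\end{equation*}
The key observation is that $A$ is a Metzler matrix (nonpositive diagonal, nonnegative off-diagonal entries, as is clear from its block structure), hence $\rho^{(1)}(r)\,A$ with $\rho^{(1)}(r)>0$ is Metzler too, and consequently $e^{\rho^{(1)}(r)\,A\,s}$ has nonnegative entries for every $s\geq 0$. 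Combined with $\mathbf{a}^{(g)}=[\gamma,\,1-\gamma,\,0,\,0]^\intercal\geq \mathbf{0}$, this yields $\mathds{1}^\intercal e^{\rho^{(1)}(r)\,A\,(t-\tau)}\,\mathbf{a}^{(g)}\geq 0$ for every $\tau\in[t_0+T,t]$, and the integral representation gives $s_{\Delta soc,N_P^{(1)}}(t)\geq 0$ throughout $[t_0+T,\,t_0+2T]$.

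I do not expect a serious obstacle: the analog of the delicate $\epsilon$-argument needed in the temperature case disappears here because the forcing term is the constant $\mathbf{a}^{(g)}/T$, which is entrywise nonnegative and for which positivity of the compartmental semigroup $e^{\rho^{(1)}(r)\,A\,s}$ is enough. The only point that requires a brief justification is the Metzler/positivity property of $e^{\rho^{(1)}(r)\,A\,s}$, which is a standard consequence of the sign structure of $A$ displayed in Section \ref{sec:1}.
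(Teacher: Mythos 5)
Your derivation of the sensitivity ODE is exactly the paper's: apply the direct-method system to (\ref{eq:odedeltacn}) with $n=1$, note that only $\vartheta^{(1)}$ depends on $N_P^{(1)}$ with $\partial\vartheta^{(1)}/\partial N_P^{(1)}=1/T$, and contract with $\mathds{1}^\intercal$ using $\mathds{1}^\intercal A=-\delta\,\mathbf{k}^\intercal$ and $\mathds{1}^\intercal\mathbf{a}^{(g)}=1$; that part is correct and identical in spirit and detail.

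For the nonnegativity claim you take a genuinely different, and in fact sharper, route. The paper stays at the scalar level: from the ODE it drops the positive forcing $1/T$ and bounds $\mathbf{k}^\intercal\mathbf{s}_{\Delta\overline{\mathbf{c}},N_P^{(1)}}\leq k_{max}\,\mathds{1}^\intercal\mathbf{s}_{\Delta\overline{\mathbf{c}},N_P^{(1)}}$ with $k_{max}:=\max_i\mathbf{k}_i$, obtaining $\frac{d s}{dt}\geq-\rho^{(1)}(r)\,\delta\,k_{max}\,s$ and concluding $s\geq 0$ by comparison with the trivial solution of the corresponding linear Cauchy problem started at zero. You instead solve the vector IVP explicitly by variation of constants, $\mathbf{s}_{\Delta\overline{\mathbf{c}},N_P^{(1)}}(t)=\frac{1}{T}\int_{t_0+T}^{t}e^{\rho^{(1)}(r)A(t-\tau)}\mathbf{a}^{(g)}\,d\tau$, and invoke the Metzler structure of $A$ (off-diagonal entries $\alpha k_j,\ \beta k_j\geq 0$, diagonal negative since $\alpha+\beta<1$) to get entrywise nonnegativity of $e^{\rho^{(1)}(r)A s}$ and hence of the whole vector sensitivity, from which $s_{\Delta soc,N_P^{(1)}}\geq 0$ follows on all of $[t_0+T,\,t_0+2T]$. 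Your version buys two things: it proves the stronger componentwise statement, and it avoids the delicate point in the paper's inequality, namely that $\sum_i k_i s_i\leq k_{max}\sum_i s_i$ is only guaranteed when the components $s_i$ are already nonnegative — precisely the positivity your Metzler argument supplies. The cost is that you must justify the (standard) positivity of the compartmental semigroup, e.g.\ by writing $e^{Ms}=e^{-cs}e^{(M+cI)s}$ with $c$ large enough that $M+cI$ is entrywise nonnegative; with that one line added, your proof is complete and arguably more robust than the paper's.
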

\begin{proof}
At first, let us consider the sensitivity of $\Delta \mathbf{\overline c}_{\rho^{(0)}(r)}$ to $N_P^{(1)}$, which satisfies the following initial value problem
\begin{equation}\label{eq:ivp sens delta c to N}
\begin{array}{l}
     \displaystyle \frac{d \mathbf{s}_{\Delta \mathbf{\overline c},N_P^{(1)}}}{dt}=\rho^{(1)}(r)\,A\, \mathbf{s}_{\Delta \mathbf{\overline c},N_P^{(1)}}+\displaystyle \frac{\mathbf{a}^{(g)}}{T},\quad t\,\in \,]\,t_0+T,\,t_0+2\,T\,] \\\\
     \mathbf{s}_{\Delta \mathbf{\overline c},N_P^{(1)}}(t_0+T)=\mathbf{0},
\end{array}
\end{equation}
according to equations (\ref{eq: compact form ODE sensitivities}) applied to equations (\ref{eq:odedeltacn}). \\
By recalling that
%$ s_{\Delta SOC,N}=\mathds{1}^\intercal \mathbf{s}_{\Delta \mathbf{\overline c},N(n)},$ \,
 $\mathds{1}^\intercal A =-\delta\, \mathbf{k}^\intercal $ and $\mathds{1}^\intercal \mathbf{a}^{(g)}=1$ it is easy to see that $s_{\Delta soc,N_P^{(1)}}$ satisfies the initial value problem (\ref{eq:ode delta SOC sensitivity to N}).\\

\noindent  To complete the proof, let us define $k_{max}:=\max_i{\mathbf{k}_i}$. Thus,
$$ \frac{d s_{\Delta soc,N_P^{(1)}}}{dt}\geq - \,\rho^{(1)}(r)\,\delta \,\,k_{max} \,s_{\Delta soc,N_P^{(1)}}, 
$$
for all $t\in]\,t_0+T,\,t_0+2\,T\,].$ Since 
% the solution of the Cauchy problem 
% $\displaystyle \frac{dx}{dt}=-\rho^{(1)}(r)\,\delta\,k_{min}\, x$, with $ x(t_0+T)=0$ is the function $x(t)\equiv 0$, for all $t\,\in \,]\,t_0+T,t_0+T+\epsilon\,]$ and 
$s_{\Delta soc,N_P^{(1)}}(t_0\,+\,T)= 0$, we have that 
$s_{\Delta soc,N_P^{(1)}}\geq 0$ for all $t\,\in\,[\,t_0+T,\,t_0+2\,T\,]$.\\
\end{proof}

\begin{remark}
The sensitivity of the SOC change index to $N_P^{(1)}$ is positive, consequently an increase of the  $N_P^{(1)}$ ratio increases the null initial value of $\Delta soc_{\rho^{(0)}(r)}$. Recalling that the sign of the index $\Delta soc_{\rho^{(0)}(r)}(t)$ detects if at the time $t$ the sum of  soil carbon contained in  compartments is greater than its initial value, we conclude that an increase in annual NPP values  
has a positive effect on the achievement of land degradation neutrality. 
\end{remark}
% \begin{figure}[htpb]
% \begin{center}
%   \includegraphics[width=12cm]{deltaSOC_N.eps}
% \end{center} 
% \end{figure}

\subsection{Sensitivity of the SOC change index to the parameter $r$ }\label{subsection:sensitivity to r}
According to Definition \ref{sensitivity_SOC}, the sensitivity of the SOC change index  to $r$
is given by $s_{\Delta soc,r}:=  \mathds{1}^\intercal \mathbf{s}_{\Delta \mathbf{\overline c},r}$. The following theorem holds.

\begin{theorem}\label{thm: sensitivity to r}
 The sensitivity of the SOC change index  to   $r$ satisfies the following initial value problem
 \begin{equation}\label{eq:ode delta SOC sensitivity to r}
\begin{array}{l}\displaystyle \frac{d s_{\Delta soc,r}}{dt}  = -\rho^{(n)}(r) \,\delta\, \mathbf{k}^\intercal \mathbf{s}_{\Delta \mathbf{\overline c},r}-
\displaystyle \frac{\partial\rho^{(n)}(r)}{\partial r} \,\delta\, \mathbf{k}^\intercal \Delta \mathbf{\overline c}_{\rho^{(0)}(r)}\\\\
 s_{\Delta soc,r}(t_0+T)=0,
\end{array}
\end{equation}
for $t\,\in\,]\,t_0+n\,T \,,\,t_0+(n+1)\,T\,],\, n=1,2,\dots$. \\
Moreover, if $\vartheta^{(1)}$ is positive, then there exists an $\epsilon>0$ such that $s_{\Delta soc,r}(t)\leq 0$ for all $t\,\in \,[ \,t_0+T,t_0+T+\epsilon\,]$. Conversely, if $\vartheta^{(1)}$ is negative, then there exists an $\epsilon>0$ such that $s_{\Delta soc,r}(t)\geq 0$ for all $t\,\in \,[ \,t_0+T,t_0+T+\epsilon\,].$ 
 \end{theorem}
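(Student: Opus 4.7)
The plan is to closely follow the template of Theorem \ref{thm: sensitivity to Temp}, adapting the argument to the parameter $r$ in place of $Temp^{(1)}$.

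\textbf{First step: deriving the ODE.} I would apply the direct method (\ref{eq: compact form ODE sensitivities}) to the autonomous system (\ref{eq:odedeltacn}), differentiating its right-hand side with respect to $r$. The $r$-dependent ingredients are $\rho^{(n)}(r)$ and $\mathbf{a}^{(g)}=[\gamma(r),\,1-\gamma(r),\,0,\,0]^\intercal$ through $\gamma(r)=r/(r+1)$, whereas the footnote after (\ref{def:vartheta}) records that $\vartheta^{(n)}$ is independent of $r$. This yields
$$\frac{d\mathbf{s}_{\Delta\mathbf{\overline c},r}}{dt}=\rho^{(n)}(r)\,A\,\mathbf{s}_{\Delta\mathbf{\overline c},r}+\frac{\partial\rho^{(n)}(r)}{\partial r}\,A\,\Delta\mathbf{\overline c}_{\rho^{(0)}(r)}+\vartheta^{(n)}\,\frac{\partial\mathbf{a}^{(g)}}{\partial r},$$
with $\mathbf{s}_{\Delta\mathbf{\overline c},r}(t_0+T)=\mathbf{0}$ because $\Delta\mathbf{\overline c}_{\rho^{(0)}(r)}(t_0+T)=\mathbf{0}$ for every $r$. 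Multiplying on the left by $\mathds{1}^\intercal$, and using $\mathds{1}^\intercal A=-\delta\mathbf{k}^\intercal$ together with $\mathds{1}^\intercal\mathbf{a}^{(g)}=1$ (hence $\mathds{1}^\intercal\partial_r\mathbf{a}^{(g)}=0$), I recover precisely the ODE (\ref{eq:ode delta SOC sensitivity to r}) with the claimed initial condition.

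\textbf{Second step: closed form and sign of the forcing on $n=1$.} For the sign assertions I would restrict to $n=1$ and insert the explicit formula from Theorem \ref{thm:delta c function n=1}. Setting $\psi(t):=\mathbf{k}^\intercal\,\varphi\bigl(\rho^{(1)}(r)(t-t_0-T)A\bigr)\,\mathbf{a}^{(g)}$, one has $\mathbf{k}^\intercal\Delta\mathbf{\overline c}_{\rho^{(0)}(r)}(t)=(t-t_0-T)\,\vartheta^{(1)}\,\psi(t)$. Since $\psi(t_0+T)=\mathbf{k}^\intercal\mathbf{a}^{(g)}>0$, continuity yields a right-neighbourhood of $t_0+T$ on which $\psi(t)>0$. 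I would also verify that $\partial\rho^{(1)}(r)/\partial r>0$: $\rho^{(1)}(r)$ depends on $r$ only through $k_c(r)$, which in turn is the composition of the increasing logistic $e^x/(1+e^x)$ with $x(r)=30(1-1/r)$, whose derivative $30/r^2$ is positive; the factors $k_a(Temp^{(1)})$ and $k_b(Acc^{(1)})$ are $r$-independent and strictly positive.

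\textbf{Third step: differential inequality.} Substituting these facts into (\ref{eq:ode delta SOC sensitivity to r}) gives, on a right-neighbourhood of $t_0+T$,
$$\frac{d s_{\Delta soc,r}}{dt}=-\rho^{(1)}(r)\,\delta\,\mathbf{k}^\intercal\mathbf{s}_{\Delta\mathbf{\overline c},r}-\frac{\partial\rho^{(1)}(r)}{\partial r}\,\delta\,(t-t_0-T)\,\vartheta^{(1)}\,\psi(t),$$
whose last term is nonpositive when $\vartheta^{(1)}>0$ and nonnegative when $\vartheta^{(1)}<0$. As in Theorem \ref{thm: sensitivity to Temp}, I would replace $\mathbf{k}^\intercal\mathbf{s}_{\Delta\mathbf{\overline c},r}$ by $k_{min}\,\mathds{1}^\intercal\mathbf{s}_{\Delta\mathbf{\overline c},r}$ in the first case (and by $k_{max}\,\mathds{1}^\intercal\mathbf{s}_{\Delta\mathbf{\overline c},r}$ in the second) to reduce matters to a scalar Cauchy problem of the form $\dot x=-\rho^{(1)}(r)\delta k_{min} x$, $x(t_0+T)=0$, whose solution is identically zero; a standard comparison then yields $s_{\Delta soc,r}(t)\leq 0$ in the first case and $s_{\Delta soc,r}(t)\geq 0$ in the second, on some interval $[t_0+T,t_0+T+\epsilon]$.

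\textbf{Main obstacle.} The delicate point is exactly the scalar comparison step: the estimate $\mathbf{k}^\intercal\mathbf{s}\le k_{\min}\,\mathds{1}^\intercal\mathbf{s}$ requires a uniform sign of the components of $\mathbf{s}_{\Delta\mathbf{\overline c},r}$ near $t_0+T$, which is available only by a continuity argument anchored at the initial condition $\mathbf{s}_{\Delta\mathbf{\overline c},r}(t_0+T)=\mathbf{0}$. Shrinking $\epsilon$ sufficiently to guarantee simultaneously the positivity of $\psi(t)$, the bound $(t-t_0-T)\,\psi(t)\,\rho^{(1)}(r)\,\delta\le 1$, and the validity of the $k_{\min}/k_{\max}$ estimate—mirroring the careful $\epsilon$-shrinking performed in the proof of Theorem \ref{thm: sensitivity to Temp}—is the subtlety that must be imported to the present setting.
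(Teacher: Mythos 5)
Your proposal matches the paper's proof essentially step for step: the same application of the direct method to (\ref{eq:odedeltacn}) with $\partial_r\mathbf{a}^{(g)}$ annihilated by $\mathds{1}^\intercal$, the same use of Theorem \ref{thm:delta c function n=1} through $\psi(t)$ to fix the sign of $\mathbf{k}^\intercal\Delta\mathbf{\overline c}_{\rho^{(0)}(r)}$ according to the sign of $\vartheta^{(1)}$, the positivity of $\partial\rho^{(1)}(r)/\partial r$, and the $k_{\min}$/$k_{\max}$ comparison with the trivial scalar Cauchy problem. The subtlety you flag in the comparison step is present in exactly the same form in the paper's own proofs of Theorems \ref{thm: sensitivity to Temp} and \ref{thm: sensitivity to r}, which invoke the $k_{\min}$/$k_{\max}$ bounds without further justification, so your attempt is faithful to the published argument.
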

 \begin{proof}
 Let us begin by obtaining the initial value problem for $\mathbf{s}_{\Delta \mathbf{\overline c},r}$. According to equations (\ref{eq: compact form ODE sensitivities})  applied to equations (\ref{eq:odedeltacn}), we have that
\begin{equation}\label{eq:ivp sens delta c to r}
\begin{array}{l}
\displaystyle \frac{d\mathbf{s}_{\Delta \mathbf{\overline c},r}}{dt}= \rho^{(n)}(r) \, A\, \mathbf{s}_{\Delta \mathbf{\overline c},r}
+
\displaystyle \frac{\partial}{\partial r}\left(\rho^{(n)}(r) \,  A \, \Delta \mathbf{\overline c}_{\rho^{(0)}(r)}\,+\,\vartheta^{(n)}\,  \mathbf{a}^{(g)} \right)
\\\\
\mathbf{s}_{\Delta \mathbf{\overline c},r} (t_0+T)=\displaystyle \frac{\partial \Delta\mathbf{\overline c}_{\rho^{(0)}(r)}(t_0+T)}{\partial r}=  \mathbf{0},
\end{array}
\end{equation}
where
$\displaystyle \frac{\partial}{\partial r}\left(\rho^{(n)}(r)\,  A \, \Delta \mathbf{\overline c}_{\rho^{(0)}(r)}\,+\,\vartheta^{(n)}\,  \mathbf{a}^{(g)} \right)=$
$$\begin{array}{ll}
\qquad\qquad\quad&=\;  \displaystyle \frac{\partial \rho^{(n)}(r)  }{\partial r}  A \, \Delta \mathbf{\overline c}_{\rho^{(0)}(r)}\,+\, \vartheta^{(n)}\displaystyle \frac{\partial \, \mathbf{a}^{(g)}}{\partial r} \\\\
&=\; \displaystyle \frac{\partial \rho^{(n)}(r)  }{\partial r}  A \, \Delta \mathbf{\overline c}_{\rho^{(0)}(r)}\,+\, \displaystyle \frac{\vartheta^{(n)}}{(r+1)^2}\mathbf{v},
\end{array}$$
and $\mathbf{v}:= [1,\, -1,\, 0,\, 0]^\intercal$. Thus, we have that
$$
\displaystyle \frac{d\mathbf{s}_{\Delta \mathbf{\overline c},r}}{dt}= \rho^{(n)}(r) \,A\, \mathbf{s}_{\Delta \mathbf{\overline c},r} + \displaystyle \frac{\partial\rho^{(n)}(r)}{\partial r}\, A\, \Delta \mathbf{c}_{\rho^{(0)}(r)} +  \displaystyle \frac{\vartheta^{(n)}\mathbf{v}}{(r+1)^2}.$$
By multiplying both sides of the above equation by $\mathds{1}^\intercal,$ and recalling that $\mathds{1}^\intercal A = -\delta \, \mathbf{k}^\intercal,$ and $ \mathds{1}^\intercal \mathbf{v}=0,$ equation (\ref{eq:ode delta SOC sensitivity to r}) is proved.

\bigskip 
\noindent For the second part of the proof, let us consider $n=1.$ We have that
$$\begin{array}{l}
\displaystyle \frac{d\,s_{\Delta soc,r}}{dt} = -\rho^{(1)}(r)\,\delta\,\mathbf{k}^\intercal \mathbf{s}_{\Delta \mathbf{\overline c},r}- \displaystyle \frac{\partial\rho^{(1)}(r)}{\partial r}\,\delta\, \,\mathbf{k}^\intercal \Delta \mathbf{\overline c}_{\rho^{(0)}(r)} \\\\
s_{\Delta soc,r}(t_0+T)=0,
\end{array}
$$
for all $t\in]\,t_0+T,\,t_0+2\,T\,]$. As in the proof of Theorem \ref{thm: sensitivity to Temp}, there exists an $ \epsilon>0$ such that for all $t\in]t_0+T, t_0+T+ \epsilon]$ the sign of the function
$$
\mathbf{k}^\intercal \Delta \mathbf{\overline c}_{\rho^{(0)}(r)}(t) = 
\vartheta^{(1)}\,(t-t_0-T)\,\mathbf{k}^\intercal \varphi\left( \rho^{(1)}(r) A\,(t-t_0-T)\right)\mathbf{a}^{(g)}
$$
is the same as the sign of $\vartheta^{(1)}$.
%, since $\mathbf{k}^\intercal\,\varphi\left( \rho^{(1)}(r) A\,(t-t_0-T)\right)\mathbf{a}^{(g)}\geq 0.$  
For this reason, we distinguish the two cases: $\vartheta^{(1)}\geq 0$ and $\vartheta^{(1)} < 0$.  Let us observe that $\displaystyle \frac{\partial\rho^{(1)}(r)}{\partial r}>0$\footnote{$\displaystyle \frac{\partial\rho}{\partial r}(Temp^{(n)},r)\,=\, k_a(Temp^{(n)})k_b({Acc^{(n)}})\, N_b\, \displaystyle \frac{e^{x(r)}}{r^2\left( 1+e^{x(r)}\right)^2},\quad x(r)=\displaystyle \frac{30(r-1)}{r}$} so that, when  $\vartheta^{(1)}\geq 0$, it results
$$\begin{array}{ll}
\displaystyle \frac{ds_{\Delta soc,r}}{dt} &= -\rho^{(1)}(r)\,\delta\,\mathbf{k}^\intercal \mathbf{s}_{\Delta \mathbf{\overline c},r}- \displaystyle \frac{\partial\rho^{(1)}(r)}{\partial r}\,\delta\, \,\mathbf{k}^\intercal \Delta \mathbf{\overline c}_{\rho^{(0)}(r)} \\\\
&\leq  -\rho^{(1)}(r)\,\delta\,k_{min}s_{\Delta soc,r}.
\end{array}$$
% \noindent Let us observe that the solution of the Cauchy problem
% $$\begin{array}{l} 
% \dot x=-\rho\,\delta\,k_{min}\, x,\qquad t\,\in \,]\,t_0+T,t_0+T+\epsilon\,]\\
%   x(t_0+T)=0
%   \end{array}
% $$
% is the function $x(t)\equiv 0$, for all $t\,\in \,]\,t_0+T,t_0+T+\epsilon\,]$. \\
Since $s_{\Delta soc,r}(t_0\,+\,T)= 0$, we have that 
$s_{\Delta soc,r}(t) \leq 0$ for all $t\,\in \,[\,t_0+T,t_0+T+\epsilon\,]$.
If $\vartheta^{(1)}< 0$, then 
$\begin{array}{ll}
\displaystyle \frac{d s_{\Delta soc,r} }{dt} 
% &= -\rho^{(1)}(r)\,\delta\,\mathbf{k}^\intercal \mathbf{s}_{\Delta \mathbf{c},r}- \displaystyle \frac{\partial\rho^{(1)}(r)}{\partial r}\,\delta\, \,\mathbf{k}^\intercal \Delta \mathbf{c}_{\rho^{(0)}(r)} \\\\
&\geq  -\rho^{(1)}(r)\,\delta\,k_{max}\,s_{\Delta soc,r}
\end{array}$ so that, as $s_{\Delta soc,r}(t_0+T)=0$, then $s_{\Delta soc,r}(t_0+T)\geq 0,$ for all $t\,\in \,[\,t_0+T,t_0+T+\epsilon\,]$ and this completes the proof.
 \end{proof}
\begin{remark}
For sufficiently small values of $t$, the sensitivity of the SOC change index  to $r$ has opposite  sign of 
$\vartheta^{(1)}$. This means that 
 an initial increase in the parameter $r$ increases or decreases the null initial  value of $\Delta soc_{\rho^{(0)}(r)}$ accordingly to negative or positive values of  $\vartheta^{(1)}$. More in details,   when changes in temperature increase the annual value NPP  more then the modifying factor $\rho^{(1)}(r)$, both  with respect to their initial values i.e.
$
 \frac{NPP(t_0+\,T)}{NPP(t_0)} \, \,\leq \, \frac{\rho^{(1)}(r)}{\rho^{(0)}(r)},  
$
this positively impacts all land use classes; viceversa,  when changes in temperature increase the modifying factor $\rho^{(1)}(r)$ more then the annual value NPP with respect to their initial value i.e. 
$
   \frac{NPP(t_0+\,T)}{NPP(t_0)} >  \frac{\rho^{(1)}(r)}{\rho^{(0)}(r)},
$
then SOC change  negatively impacts all the land use class. In both positive and negative case the arable land use class results the  most affected.  
\end{remark}

\section{A model for SOC changes with farmyard input as control variable}\label{sec:5}

\noindent In case of farmyard manure input,  Theorem \ref{thm:forestandgrass} is modified as follows.
\begin{theorem}\label{thm:arable}
Under the hypothesis $F(t_0)\, \neq \, 0$, the dynamics of the variable 
% $\Delta \mathbf{c}_{\rho^{(0)}(r)}(t):=\,  \displaystyle \frac{ \mathbf{c}(t_0)\, -\mathbf{c}(t)}{\rho^{(0)}(r)\,\mathds{1}^\intercal \, A \, \mathbf{c}(t_0)}$ 
$\Delta \mathbf{c}_{\rho^{(0)}(r)}(t):=\,  \displaystyle \frac{ \mathbf{c}(t)\, -\mathbf{c}(t_0)}{P(t_0)\,+\, F(t_0)}$ 
for  $t\in[t_0+nT,\,t_0+(n+1)T]$, for $n=1, 2, \dots$,
is governed by the equation
\begin{equation}\label{eq:change_c_arable}
  \begin{array}{rcl}
  \displaystyle \frac{d \Delta \mathbf{c}_{\rho^{(0)}(r)}}{dt}   & =& \rho(t) \, A\, \Delta \mathbf{c}_{\rho^{(0)}(r)}\, +\, \left(  \, N_P^{(n)} \, \hat g_r(t) \,-\, \displaystyle \frac{\rho(t)}{T\, \rho^{(0)}(r)}  \right) \epsilon\,  \, \mathbf{a}^{(g)} \\\\
    &+ & \left( \displaystyle \frac{f(t)}{F(t_0)}  \,-\,  \displaystyle \frac{\rho(t) }{T\, \rho^{(0)}(r)} \right) (1-\epsilon)\, \mathbf{a}^{(f)}, \qquad \Delta \mathbf{c}_{\rho^{(0)}(r)}(t_0+T)\,=\, \mathbf{0},
\end{array} 
\end{equation}
where $0\leq\,\epsilon:= \displaystyle \frac{P(t_0)}{P(t_0)\,+\,F(t_0)}\, < \, 1$. 
\end{theorem}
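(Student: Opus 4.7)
The plan is to adapt the derivation of Theorem \ref{thm:forestandgrass} by carrying the FYM input term through the computation. First I would start from (\ref{eq:model}) written as
$$\frac{d\mathbf{c}}{dt} = \rho(t)\, A\, \mathbf{c} + P(t_0)\, N_P^{(n)}\, \hat g_r(t)\, \mathbf{a}^{(g)} + f(t)\, \mathbf{a}^{(f)},$$
and substitute $\mathbf{c}(t) = \mathbf{c}(t_0) + (P(t_0)+F(t_0))\,\Delta \mathbf{c}_{\rho^{(0)}(r)}(t)$ (which is just the definition rearranged), obtaining
$$\frac{d \Delta \mathbf{c}_{\rho^{(0)}(r)}}{dt} = \rho(t)\, A\, \Delta \mathbf{c}_{\rho^{(0)}(r)} + \frac{1}{P(t_0)+F(t_0)}\Bigl(\rho(t)\, A\, \mathbf{c}(t_0) + P(t_0)\, N_P^{(n)}\, \hat g_r(t)\, \mathbf{a}^{(g)} + f(t)\, \mathbf{a}^{(f)}\Bigr).$$

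Next I would exploit the equilibrium relation (\ref{eq:cstarzero}), which rearranges to $A\, \mathbf{c}(t_0) = -\frac{1}{T\rho^{(0)}(r)}\bigl(P(t_0)\,\mathbf{a}^{(g)} + F(t_0)\,\mathbf{a}^{(f)}\bigr)$, and use it to replace $\rho(t)\, A\, \mathbf{c}(t_0)$ in the forcing. After this substitution the non-homogeneous term splits cleanly along $\mathbf{a}^{(g)}$ and $\mathbf{a}^{(f)}$: the $\mathbf{a}^{(g)}$-part reads $\bigl(N_P^{(n)}\hat g_r(t) - \rho(t)/(T\rho^{(0)}(r))\bigr)\, P(t_0)/(P(t_0)+F(t_0))$, and the $\mathbf{a}^{(f)}$-part reads $\bigl(f(t)/F(t_0) - \rho(t)/(T\rho^{(0)}(r))\bigr)\, F(t_0)/(P(t_0)+F(t_0))$. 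Recognizing $\epsilon = P(t_0)/(P(t_0)+F(t_0))$ and $1-\epsilon = F(t_0)/(P(t_0)+F(t_0))$ yields exactly (\ref{eq:change_c_arable}). The initial condition $\Delta \mathbf{c}_{\rho^{(0)}(r)}(t_0+T) = \mathbf{0}$ follows from the equilibrium hypothesis: since $\mathbf{c}(t_0)$ is by construction the equilibrium of (\ref{eq:first_interval}) on $[t_0, t_0+T]$, we have $\mathbf{c}(t_0+T) = \mathbf{c}(t_0)$.

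The computation is essentially bookkeeping; the step that requires care is the algebraic grouping that brings out the common factors $\epsilon$ and $1-\epsilon$ so that both forcing channels carry the same structural correction $-\rho(t)/(T\rho^{(0)}(r))$ to their instantaneous input. That this symmetric form is available at all is a direct consequence of the symmetric way in which $P(t_0)\,\mathbf{a}^{(g)}$ and $F(t_0)\,\mathbf{a}^{(f)}$ enter the equilibrium identity (\ref{eq:cstarzero}). As a sanity check, in the limit $F(t_0) \to 0$ (i.e. $\epsilon \to 1$) the manure-driven term vanishes and (\ref{eq:change_c_arable}) collapses to (\ref{eq:change_c}), recovering Theorem \ref{thm:forestandgrass}.
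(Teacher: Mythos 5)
Your proposal is correct and follows essentially the same route as the paper's proof: differentiate the rescaled variable, substitute the equilibrium identity (\ref{eq:cstarzero}) for $A\,\mathbf{c}(t_0)$, and regroup the forcing along $\mathbf{a}^{(g)}$ and $\mathbf{a}^{(f)}$ to expose the factors $\epsilon$ and $1-\epsilon$. Your additional remarks on the initial condition and the $\epsilon\to 1$ consistency check are sound complements to what the paper leaves implicit.
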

\begin{proof}
\noindent By plugging the expression of $P(t_0\,+\,n T)$ into the
equation (\ref{eq:model}), for all $t\in[t_0+nT,\,t_0+(n+1)T],$ we have 
$$
 \displaystyle \frac{d  \mathbf{c}}{dt}
\,=\, 
\rho(t)\,  A \, \mathbf{c}\, +\, P(t_0)\, N_P^{(n)}\,\hat g_r(t) \, \mathbf{a}^{(g)} +\,   f(t) \, \mathbf{a}^{(f)}.
$$
Thus,
% $$\dot \Delta \mathbf{c}_{\rho^{(0)}(r)}(t)\, =\, \frac{-1}{\rho^{(0)}(r)\,\mathds{1}^\intercal \, A \, \mathbf{c}(t_0)} \left(\rho(t)\,  A \, \mathbf{c}\,  -T\, \rho^{(0)}(r)\,\mathds{1}^\intercal \, A \, \mathbf{c}(t_0)\, N_P^{(n)}\,\hat g_r(t) \, \mathbf{a}^{(g)} \right) %$$
$$\displaystyle \frac{d \Delta \mathbf{c}_{\rho^{(0)}(r)}}{dt}  =\, \frac{1}{P(t_0)\,+\, F(t_0)} \left(\rho(t)\,  A \, \mathbf{c}\,  +\, P(t_0)\, N_P^{(n)}\,\hat g_r(t) \, \mathbf{a}^{(g)}\, +\,   f(t) \, \mathbf{a}^{(f)} \right) $$
$$=\rho(t) \, A \,\Delta \mathbf{c}_{\rho^{(0)}(r)}+
\frac{1}{P(t_0)+F(t_0)} \left( \rho(t)\,  A \, \mathbf{c}(t_0)\, +\, P(t_0)\,  N_P^{(n)}\,\hat g_r(t) \, \mathbf{a}^{(g)} +\,   f(t) \, \mathbf{a}^{(f)} \right) $$
% $$=\rho(t) \, A \,\Delta \mathbf{c}_{\rho^{(0)}(r)}+
%  \left( T\,  N_P^{(n)}\,\hat g_r(t) \, \mathbf{a}^{(g)} - \frac{\rho(t)\,  A \, \mathbf{c}(t_0)}{\rho^{(0)}(r)\,\mathds{1}^\intercal \, A \, \mathbf{c}(t_0)} \right) .
% $$
$$=\rho(t) \, A \,\Delta \mathbf{c}_{\rho^{(0)}(r)}+
   \frac{P(t_0)}{P(t_0)+F(t_0)} N_P^{(n)}\,\hat g_r(t) \, \mathbf{a}^{(g)} \,+\, \frac{\rho(t)\,  A \, \mathbf{c}(t_0)}{P(t_0)\,+\, F(t_0)} \,+\, \displaystyle \frac{f(t)}{P(t_0)\,+\, F(t_0)}\, \mathbf{a}^{(f)} .
$$
Recalling the relation between $P(t_0)$ and  $\mathbf{c}(t_0)$ in   (\ref{eq:cstarzero}) that yields   
%$$
%\mathds{1}^\intercal \, A \, %\mathbf{c}(t_0)\, %\mathbf{a}^{(g)}\, = \, A\, %\mathbf{c}(t_0),\ 
%$$
$$
A \mathbf{c}(t_0)\, =\, \displaystyle -\frac{1}{T\,\rho^{(0)}(r)} \left( P(t_0)\, \mathbf{a}^{(g)}\,+\, F(t_0)\, \mathbf{a}^{(f)}\right),  
$$
 we have
$$
\begin{array}{rcl}
 \displaystyle \frac{d \Delta \mathbf{c}_{\rho^{(0)}(r)}}{dt}   & =& \rho(t) \, A\, \Delta \mathbf{c}_{\rho^{(0)}(r)}\, +\, \left(  \, N_P^{(n)} \, \hat g_r(t) \,-\, \displaystyle \frac{ \rho(t)}{T\, \rho^{(0)}(r)}  \right) \epsilon\,  \, \mathbf{a}^{(g)} \\\\
    &+ & \left( \displaystyle \frac{f(t)}{F(t_0)}  \,-\,  \displaystyle \frac{\rho(t) }{T\, \rho^{(0)}(r)} \right) (1-\epsilon)\, \mathbf{a}^{(f)}.
\end{array}
$$
\end{proof}

\noindent The dynamics for $\Delta soc_{\rho^{(0)}(r)}(t)$ can be immediately deduced from the dynamics of $\Delta \mathbf{c}_{\rho^{(0)}(r)}(t)$ as  follows.

\begin{corollary}\label{thm:change_s_f}
In case of  farmyard manure input, the dynamics of the SOC change index 
$\Delta soc_{\rho^{(0)}(r)}(t)$
for  $t\in[t_0+nT,\,t_0+(n+1)T]$, for $n=1, 2, \dots$,
is governed by the equation
\begin{equation}\label{eq:SOCarable}
    \begin{array}{rcl}
   \displaystyle \frac{d \Delta soc_{\rho^{(0)}(r)}(t)}{dt}   &=&   -\delta\, \rho(t)\,\, \mathbf{k}^\intercal \Delta \mathbf{c}_{\rho^{(0)}(r)}\, \\\\
   &+&\, \epsilon\,\left(  \, N_P^{(n)} \, \hat g_r(t) \,-\, \displaystyle \frac{\rho(t)}{\epsilon \, T\, \rho^{(0)}(r)}  \right) + \,  (1-\epsilon)\, \displaystyle \frac{f(t)}{F(t_0)},
   \end{array}
\end{equation}
where $\Delta \mathbf{c}_{\rho^{(0)}(r)}(t)$ solves (\ref{eq:change_c_arable}) and  $\Delta soc_{\rho^{(0)}(r)}(t_0+T)\, =\, \Delta soc_{\rho^{(0)}(r)}(t_0)\,=\, 0$.
\end{corollary}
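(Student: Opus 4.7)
The plan is to derive (\ref{eq:SOCarable}) by simply left-multiplying the vector ODE (\ref{eq:change_c_arable}) of Theorem \ref{thm:arable} by the row vector $\mathds{1}^\intercal$, exactly mirroring the one-line argument used for Corollary \ref{thm:change_s}. Indeed, from Definition \ref{def:delta_SOC} together with the normalisation used in Theorem \ref{thm:arable}, one has $\Delta soc_{\rho^{(0)}(r)}(t) = \mathds{1}^\intercal \Delta \mathbf{c}_{\rho^{(0)}(r)}(t)$, so differentiating under $\mathds{1}^\intercal$ immediately reduces the scalar ODE to a contraction of the vector one.

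The three identities required are the same ones used throughout the paper: $\mathds{1}^\intercal A = -\delta\, \mathbf{k}^\intercal$, together with $\mathds{1}^\intercal \mathbf{a}^{(g)} = \gamma + (1-\gamma) = 1$ and $\mathds{1}^\intercal \mathbf{a}^{(f)} = \eta + \eta + (1-2\eta) = 1$ (both of which are explicit from the stoichiometric definitions in Section \ref{sec:1}). Applying $\mathds{1}^\intercal$ to each term on the right-hand side of (\ref{eq:change_c_arable}) then yields directly
\begin{equation*}
\frac{d \Delta soc_{\rho^{(0)}(r)}}{dt} = -\delta\, \rho(t)\, \mathbf{k}^\intercal \Delta \mathbf{c}_{\rho^{(0)}(r)} + \epsilon\left(N_P^{(n)}\, \hat g_r(t) - \frac{\rho(t)}{T\, \rho^{(0)}(r)}\right) + (1-\epsilon)\left(\frac{f(t)}{F(t_0)} - \frac{\rho(t)}{T\, \rho^{(0)}(r)}\right).
\end{equation*}
The final step is a purely cosmetic regrouping: the two $\rho(t)/(T\rho^{(0)}(r))$ contributions, weighted respectively by $\epsilon$ and $1-\epsilon$, collapse into a single term $\rho(t)/(T\rho^{(0)}(r))$, which can be rewritten as $\epsilon \cdot \rho(t)/(\epsilon\, T\, \rho^{(0)}(r))$ and pulled inside the $\epsilon$-parenthesis, producing exactly the form stated in (\ref{eq:SOCarable}). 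The initial condition $\Delta soc_{\rho^{(0)}(r)}(t_0+T) = 0$ is inherited at once from $\Delta \mathbf{c}_{\rho^{(0)}(r)}(t_0+T) = \mathbf{0}$ in Theorem \ref{thm:arable}.

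I do not expect any genuine obstacle: the corollary is a scalar projection of a vector result already proved. The only point warranting care is bookkeeping of the factors $\epsilon$ and $1-\epsilon$ in front of $\mathbf{a}^{(g)}$ and $\mathbf{a}^{(f)}$, to make sure the rearrangement into the peculiar form with $\epsilon\, T\, \rho^{(0)}(r)$ in the denominator is done correctly; beyond that, the argument is a single line of linear algebra.
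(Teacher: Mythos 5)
Your proposal is correct and is essentially the paper's own argument: the paper likewise obtains (\ref{eq:SOCarable}) by applying $\mathds{1}^\intercal$ to (\ref{eq:change_c_arable}), using $\mathds{1}^\intercal A = -\delta\,\mathbf{k}^\intercal$ and the fact that the entries of $\mathbf{a}^{(g)}$ and $\mathbf{a}^{(f)}$ sum to one. Your explicit regrouping of the $\epsilon$ and $1-\epsilon$ weighted $\rho(t)/(T\rho^{(0)}(r))$ terms into the form with $\epsilon\,T\,\rho^{(0)}(r)$ in the denominator is exactly the (unstated) bookkeeping behind the paper's one-line proof.
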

\begin{proof}
The result trivially arises recalling the definition of $\Delta soc_{\rho^{(0)}(r)}$ which gives that  
$
\displaystyle   \frac{d \Delta soc_{\rho^{(0)}(r)}}{dt} (t)\, :=\, \mathds{1}^\intercal \displaystyle \frac{d \Delta \mathbf{c}_{\rho^{(0)}(r)}}{dt}$ and by observing that $\mathds{1}^\intercal A\, =\, -\delta\, \mathbf{k}^\intercal$. 
% =     \rho(t) \, \mathds{1}^\intercal\, A\, \Delta \mathbf{c}_{\rho^{(0)}(r)}\, +\, \left( N_P^{(n)} \, \hat g_r(t)\,-\, \displaystyle \frac{\rho(t)}{T\,\rho^{(0)}(r)}  \right).
\end{proof}
In view of Theorem \ref{thm:fdit}, we  introduce the following definition.
\begin{definition}\label{defro}
Set $$r_0(t)\, :=\, \rho(t)\, \frac{1}{1-\epsilon}\left[ \delta\, \mathbf{k}^\intercal\, \Delta \mathbf{c}_{\rho^{(0)}(r)}(t)\, +\, \frac{1}{T \rho^{(0)}(r)}\right] - \frac{\epsilon\,}{1-\epsilon}\,N_P^{(n)} \, \hat g_r(t).$$
We define the {\it modifying factor of the farmyard manure} as the quantity   $$f_0(t)\, :=\, \max \left(0,\,r_0(t)\right).$$
\end{definition}

\begin{theorem}\label{thm:fdit}
The density function of farmyard manure defined as $f(t)\,:=f_0(t)\, F(t_0)$
assures that  $soc_{\rho^{(0)}(r)}(t)\, \geq soc_{\rho^{(0)}(r)} (t_0)$ for all $t\in ]t_0+nT,t_0+(n+1)T]$ and $n=1,2,\dots.$
\end{theorem}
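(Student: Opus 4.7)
The plan is to show that the prescribed choice of $f(t)$ forces the derivative $\frac{d}{dt}\Delta soc_{\rho^{(0)}(r)}(t)$ to be nonnegative on each monthly interval, and then invoke the vanishing initial condition $\Delta soc_{\rho^{(0)}(r)}(t_0+T)=0$ together with continuity to propagate the nonnegativity to all later times. Concretely, since $soc(t)-soc(t_0) = (P(t_0)+F(t_0))\,\Delta soc_{\rho^{(0)}(r)}(t)$ and $P(t_0)+F(t_0)>0$, the claim $soc(t)\geq soc(t_0)$ is equivalent to $\Delta soc_{\rho^{(0)}(r)}(t)\geq 0$, so I would work exclusively with the normalized index.

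First, I would substitute $f(t)=f_0(t)\,F(t_0)$ into the dynamics of $\Delta soc_{\rho^{(0)}(r)}$ furnished by Corollary \ref{thm:change_s_f}, reducing the last term to $(1-\epsilon)f_0(t)$. Next, I would read off from Definition \ref{defro} the algebraic identity
\begin{equation*}
(1-\epsilon)\,r_0(t)\;=\;\rho(t)\Bigl[\delta\,\mathbf{k}^{\intercal}\Delta\mathbf{c}_{\rho^{(0)}(r)}(t)+\frac{1}{T\rho^{(0)}(r)}\Bigr]\;-\;\epsilon\,N_P^{(n)}\,\hat g_r(t),
\end{equation*}
which is exactly the negative of the combination of the remaining three summands on the right-hand side of (\ref{eq:SOCarable}). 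This yields the compact expression
\begin{equation*}
\frac{d\,\Delta soc_{\rho^{(0)}(r)}}{dt}(t)\;=\;(1-\epsilon)\bigl(f_0(t)-r_0(t)\bigr).
\end{equation*}

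Since by construction $f_0(t)=\max(0,r_0(t))\geq r_0(t)$ and $1-\epsilon>0$, the derivative is pointwise nonnegative on $]t_0+nT,t_0+(n+1)T]$ for every $n\geq 1$. Using $\Delta soc_{\rho^{(0)}(r)}(t_0+T)=0$ from Corollary \ref{thm:change_s_f} and integrating, I get $\Delta soc_{\rho^{(0)}(r)}(t)\geq 0$ on $[t_0+T,t_0+2T]$; continuity of $\Delta soc_{\rho^{(0)}(r)}$ across the node $t_0+(n+1)T$, together with the same monotonicity argument applied successively on each interval, delivers the nonnegativity for all subsequent months by a straightforward induction on $n$.

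The only nontrivial bookkeeping is verifying the algebraic identity linking the nonlinear right-hand side of (\ref{eq:SOCarable}) to $r_0(t)$; once that is established, the rest is an application of the fundamental theorem of calculus to a nonnegative integrand. I do not anticipate any essential obstacle, but care is needed in propagating the inequality across interval boundaries because the factor $N_P^{(n)}$ (and hence $r_0$) is piecewise constant in $n$, so the induction must explicitly appeal to continuity of $\Delta soc_{\rho^{(0)}(r)}$ at the junction points.
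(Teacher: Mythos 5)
Your proposal is correct and follows essentially the same route as the paper: both rewrite the right-hand side of (\ref{eq:SOCarable}) via Definition \ref{defro} as $\frac{d}{dt}\Delta soc_{\rho^{(0)}(r)}=(1-\epsilon)\bigl(f(t)/F(t_0)-r_0(t)\bigr)$ and then use $f_0=\max(0,r_0)\geq r_0$ together with the zero initial condition. Your explicit pointwise inequality plus integration and induction across the yearly nodes is in fact a slightly cleaner rendering of the paper's case split on the sign of $r_0(t)$.
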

\begin{proof}
Notice that $\frac{1}{1-\epsilon}\,\frac{d}{dt}\Delta soc_{\rho^{(0)}(r)}(t)=-r_0(t)+\frac{f(t)}{F(t_0)}$.
Suppose $r_0(t) \geq 0$. By plugging the expression of $f(t)\, =\, f_0(t)\,  F(t_0)$ in the equation (\ref{eq:SOCarable}), we have that $\displaystyle \frac{d \Delta soc_{\rho^{(0)}(r)}(t)}{dt}   =   0$. Hence  $\Delta soc_{\rho^{(0)}(r)}(t)\, =\,0$ and consequently  $soc_{\rho^{(0)}(r)}(t)=soc_{\rho^{(0)}(r)} (t_0)$ for all $t\in ]t_0+nT,t_0+(n+1)T],\,\, n=1,2,\dots.$ When $r_0(t) < 0$ then $\displaystyle \frac{d \Delta soc_{\rho^{(0)}(r)}(t)}{dt} > 0$ and $soc_{\rho^{(0)}(r)}(t)\, >\, soc_{\rho^{(0)}(r)} (t_0)$ for all $t\in ]t_0+nT,t_0+(n+1)T],\,\, n=1,2,\dots.$
\end{proof}
% Thus, we impose $\dot{\Delta} SOC_{\rho^{(0)}(r)} =0$ and we get
% $$
%  f(t)\,=\rho(t)\, \frac{F(t_0)}{1-\epsilon}\left[ \delta\, \mathbf{k}^\intercal\, \Delta \mathbf{c}_{\rho^{(0)}(r)}(t)\, +\, \frac{1}{T \rho^{(0)}(r)}\right] - \frac{\epsilon\,F(t_0)}{1-\epsilon}\,N_P^{(n)} \, \hat g_r(t) .
% $$
%$$
% f(t)\,=\rho(t)\, \frac{P(t_0)}{\epsilon}\left[ \delta\, \mathbf{k}^\intercal\, \Delta \mathbf{c}_{\rho^{(0)}(r)}(t)\, +\, \frac{1}{T \rho^{(0)}(r)}\right] - P(t_0)\,N_P^{(n)} \, \hat g_r(t) . 
%$$
% \begin{corollary} The solution of
% $$
%   \begin{array}{lll}
%   \displaystyle \frac{d \Delta \mathbf{c}_{\rho^{(0)}(r)}}{dt} (t) &=& \rho(t) \, A\, \Delta \mathbf{c}_{\rho^{(0)}(r)}\, -   \rho(t)\, \mathbf{1}^\intercal\, A\,\Delta \mathbf{c}_{\rho^{(0)}(r)}(t)\,  \mathbf{a}^{(f)}\,+\,\\\\ &+& \,\epsilon\left(  \, N_P^{(n)} \, \hat g_r(t) \,-\, \displaystyle \frac{\rho(t)}{T\, \rho^{(0)}(r)}  \right)\,  \, (\mathbf{a}^{(g)} -\mathbf{a}^{(f)})
%     \end{array}
% $$

% \end{corollary}
% Hence, by plugging the expression of $f(t)$ into the equation for $\dot\Delta \mathbf{c}_{\rho^{(0)}(r)}(t)$, equation (\ref{eq:change_c_arable}) becomes
% $$
%   \begin{array}{lll}
%   \dot \Delta \mathbf{c}_{\rho^{(0)}(r)}(t)\, &=& \rho(t) \, A\, \Delta \mathbf{c}_{\rho^{(0)}(r)}\, -   \rho(t)\, \mathbf{1}^\intercal\, A\,\Delta \mathbf{c}_{\rho^{(0)}(r)}(t)\,  \mathbf{a}^{(f)}\,+\,\\\\ &+& \,\epsilon\left(  \, N_P^{(n)} \, \hat g_r(t) \,-\, \displaystyle \frac{\rho(t)}{T\, \rho^{(0)}(r)}  \right)\,  \, (\mathbf{a}^{(g)} -\mathbf{a}^{(f)})
%     \end{array}
% $$
\bigskip
\begin{remark}
Notice that the value $\epsilon=0$, which corresponds to $P(t_0)\, =\,0 $ (or $F(t_0)>>P(t_0)$), gives $r_0(t)\, :=\, \rho(t)\, \delta\, \mathbf{k}^\intercal\, \Delta \mathbf{c}_{\rho^{(0)}(r)}(t)\, +\,\displaystyle  \frac{1}{T \rho^{(0)}(r)}\, >0$  then  $soc_{\rho^{(0)}(r)}(t)=soc_{\rho^{(0)}(r)} (t_0)$ for all $t\in ]t_0+nT,t_0+(n+1)T],\,\, n=1,2,\dots.$ By increasing values of the parameter $\epsilon$, the value of $ \Delta soc_{\rho^{(0)}(r)}(t)$ increases. For $\epsilon\, =\, 1$ (which holds for $F(t_0)\,=0)$, equation  (\ref{eq:change_c_arable}) corresponds to the case with no farmyard manure input. Hence, by increasing $\epsilon$ from $0$ to $1$, we explore all the cases from only farmyard manure input to only plant input.   
\end{remark}
\section{A non-standard approximation of SOC changes}\label{sec:6}
In \cite{parshotam1996rothamsted} the author proved that the original discrete RothC model  in \cite{coleman1996rothc} can be thought as one-step, first-order in time,  discretization   of the continuous model (\ref{eq:RothCode}). In light of this interpretation, a novel non-standard first-order approximation which inherits  the discrete decomposition process of the original model and has the same equilibrium state of the continuous dynamics  (\ref{eq:RothCode}), was proposed in  \cite{diele2021non}. 
When applied as a  monthly time-stepping procedure,  it can be considered a suitable alternative to the original discrete RothC model. 
In  monthly units the annual length corresponds to  $T=12$ and the interval $[t_0+nT, \, t_0+(n+1)\, T]$ is  discretized  in the set of instants   $t_{m+1}^{(n)}\,:=\, t_{m}^{(n)}\,+ \Delta t_m$, with $m = 0,\dots, 11$ and $t_{0}^{(n)}\,=\,t_0\,+\, n\, T $. The step lengths are set as  $\Delta t_m\,:=\,\displaystyle \frac{T}{365} \, N_m\, \approx 1$, where $N_m$ is the number of days of the $m^{th}$ month of the $n^{th}$ year.  By denoting with $I$ the $4$ dimensional identity matrix, and setting $\mathbf{f}(\mathbf{c}; t):=  \rho(t)\,  A \, \mathbf{c}\,+\,   \mathbf{b}(t) $ 
% with $\mathbf{b}(t)= P(t_0+n\,T)\,\hat g_r(t) \, \mathbf{a}^{(g)}$,  
and  $\widetilde A:= A\,(I-\Lambda)^{-1}\, =\,-(I-\Lambda)\, D\, (I-\Lambda)^{-1}$, with 
$$
 \Lambda=\left(\begin{array}{cccc}
 0 &0 & 0&0 \\
 0 &0 & 0&0 \\
 \alpha & \alpha & \alpha & \alpha\\
 \beta & \beta & \beta & \beta
 \end{array}\right), 
$$
the approximated values $\mathbf{c}_m^{(n)} \approx \mathbf{c}(t_{m}^{(n)}) $ of the solution of (\ref{eq:model}), are given by 
\begin{equation}\label{eq:first}
 \begin{array}{lll}
 \mathbf{c}^{(n)}_{m+1}& =& \mathbf{c}^{(n)}_m \,+\, \Delta t_m \,\varphi(\Delta t^{(n)}_m \,\rho({t_{m}^{(n)}})\, \widetilde A) \,\,\, \mathbf{f}(\mathbf{c}^{(n)}_m; t^{(n)}_m)  \end{array}
\end{equation}
 or, equivalently, 
\begin{equation}\label{eq:second}
 \begin{array}{lll}
 \mathbf{c}^{(n)}_{m+1} &=& F(\Delta t^{(n)}_m\,\rho(t^{(n)}_{m}))\,\,\, \mathbf{c}^{(n)}_m \,+\, \Delta t^{(n)}_m \,\varphi(\Delta t^{(n)}_m\,\rho(t^{(n)}_{m})\, \widetilde A ) \,\,\, \mathbf{b}(t^{(n)}_{m}),
 \end{array}
\end{equation}
where $F(t):= \Lambda + (I-\Lambda)\, e^{-t\, D}$ and $\Delta t^{(n)}_m \,\varphi(\Delta t^{(n)}_m\,\rho(t^{(n)}_{m})\, \widetilde A )\, =\,   \mathcal{O}(diag(\Delta t^{(n)}_m))$ \cite{diele2021non}, the function $\varphi$ being defined as in Theorem \ref{thm:delta c function n=1}. The formulation (\ref{eq:first}) emphasizes the sharing of the stationary equilibria of the continuous autonomous model $ \displaystyle \frac{d \mathbf{c}}{dt}\, =\,  \mathbf{f} (\mathbf{c})$  in case when the explicit temporal dependence is neglected and  temporal averaged quantities are exploited. Formulation (\ref{eq:second})   highlights the similarity with the discrete original RothC model which proceeds according to 
\begin{equation}\label{eq:RothCdiscreto}
 \begin{array}{lll}
 \mathbf{c}^{(n)}_{m+1} &=& F(\Delta t^{(n)}_m\,\rho(t^{(n)}_{m}))\,\,\, \mathbf{c}^{(n)}_m \,+\, \Delta t^{(n)}_m \,\, \mathbf{b}(t^{(n)}_{m}).
 \end{array}
\end{equation}

\noindent In this paper, we are interested in finding an analogous monthly time-stepping procedure for approximating the changes of $\mathbf{c}(t)$ provided by the evolution of the variable $\Delta \mathbf{c}_{\rho^{(0)}(r)}(t)$. From the observation that the homogeneous systems for  $\mathbf{c}(t)$ and $\Delta \mathbf{c}_{\rho^{(0)}(r)}(t)$ are both governed by the matrix $\rho(t) \, A$, it makes sense to use the non standard procedure described above. Consequently, the approximated values $\Delta \mathbf{c}_{m}^{(n)}  \approx \Delta \mathbf{c}_{\rho^{(0)}(r)}(t_{m}^{(n)})$ of the solution of (\ref{eq:change_c}), are given by \begin{equation}\label{eq:change_c_approx_first}
 \begin{array}{lll}
 \Delta \mathbf{c}^{(n)}_{m+1}& =& \Delta \mathbf{c}^{(n)}_{m}\,+\, \Delta t_m \,\varphi(\Delta t^{(n)}_m \,\rho({t_{m}^{(n)}})\, \widetilde A) \,\,\, \mathbf{f}( \Delta  \mathbf{c}^{(n)}_{m}; t^{(n)}_m)  \end{array}
\end{equation}
 or, equivalently, 
\begin{equation}\label{eq:change_c_approx_second}
 \begin{array}{lll}
 \Delta \mathbf{c}^{(n)}_{m+1} &=& F(\Delta t^{(n)}_m\,\rho(t^{(n)}_{m}))\,\,\, \Delta \mathbf{c}^{(n)}_m \,+\, \Delta t^{(n)}_m \,\varphi(\Delta t^{(n)}_m\,\rho(t^{(n)}_{m})\, \widetilde A ) \,\,\, \mathbf{b}(t^{(n)}_{m}),
 \end{array}
\end{equation}
where, with abuse of notation, $\mathbf{f}( \Delta \mathbf{c}_{\rho^{(0)}(r)}; t)\, =\,   \rho(t) \, A\, \Delta \mathbf{c}_{\rho^{(0)}(r)}\, +\, \mathbf{b}(t) $ and 
$$
 \mathbf{b}(t)\, = \, \left( N_P^{(n)} \,  \hat g_r(t)\,-\, \displaystyle \frac{\rho(t)}{T\,\rho^{(0)}(r)}  \right) \mathbf{a}^{(g)}
$$
in case of no farmyard manure input, while 
$$
 \mathbf{b}(t)\, = 
\left(  \, N_P^{(n)} \, \hat g_r(t) \,-\, \displaystyle \frac{\rho(t)}{T\, \rho^{(0)}(r)}  \right) \epsilon\,  \, \mathbf{a}^{(g)} \\\\
    + \left( \displaystyle \frac{f(t)}{F(t_0)}  \,-\,  \displaystyle \frac{\rho(t) }{T\, \rho^{(0)}(r)} \right) (1-\epsilon)\, \mathbf{a}^{(f)}, 
$$
where $0<\,\epsilon:= \displaystyle \frac{P(t_0)}{P(t_0)\,+\,F(t_0)}\, < \, 1$,  in the opposite case.

\noindent Finally, $\Delta soc_{\rho^{(0)}(r)}(t^{(n)}_{m})$ are approximated by  $  \Delta soc^{(n)}_{m}:= \mathds{1}^\intercal  \,  \Delta \mathbf{c}^{(n)}_{m} $, for $m = 1,\dots, 12$ and $n=1,\, 2\, \dots$.

\section{A test case: trends of $SOC$ changes in Alta Murgia National Park.}\label{sec:7}
As an application of the illustrated procedure, we analyze the change of SOC in Alta Murgia National Park, a protected area in Italian Apulia region, southern Italy, established in 2004 (see Figure \ref{fig:altamurgia}). Two parameters are fixed for all the land surface area of $68077$ ha, i.e. the depth layer is fixed at $d=23\, cm$ and the clay content is set  at the percentage $cly\, = \,50$, i.e. the value used in \cite{farina2013modification} for experiments at the experimental farm of the CRA-Cereal Research Centre (41\si{\celsius} 27' N, 15\si{\celsius} 30' E) in Foggia. 
\begin{figure}[t] 
\begin{center}
  \includegraphics[width=\textwidth]{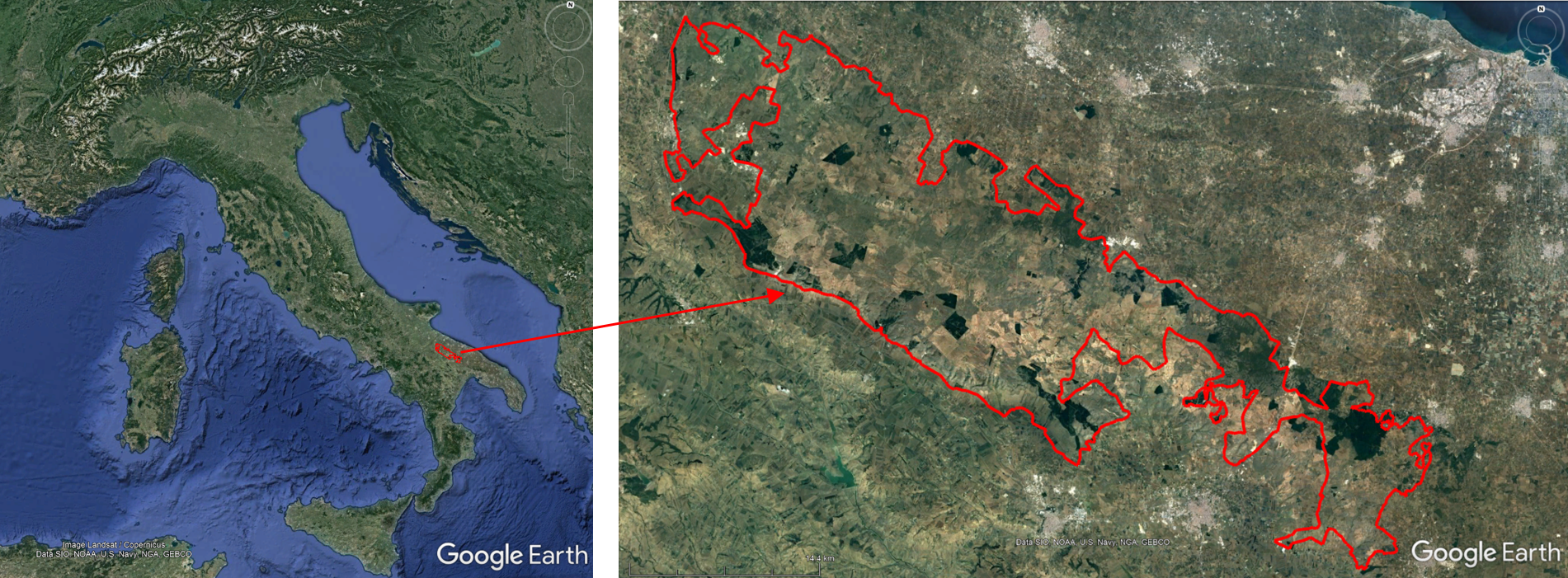}
  \caption{Boundaries of Alta Murgia National Park from Google Earth.}\label{fig:altamurgia}
\end{center} 
\end{figure}
 Temperature, rainfull, diurnal temperature range  from 2005 to 2019 at (40\si{\celsius} 75' N, 16\si{\celsius} 75' E,)   are extracted from the CRU TS 4.04 grid-box dataset \cite{harris2020version} of the  Climatic Research Unit (University of East Anglia) and NCAS (see Figure \ref{fig:weather}). Potential evapotranspiration is calculated from the available climate data according to the  Thornthwaite's formula given in the Appendix. Estimates of Net Primary Production  across Earth’s entire vegetated land surface are taken from  MOD17 project\footnote{https://www.ntsg.umt.edu/project/modis/mod17.php}, part of the NASA Earth Observation System (EOS) program, which is the first satellite-driven dataset \cite{running2019mod17a3hgf} to monitor vegetation productivity on a global scale. We have extracted NPP data in the temporal  range  from 2005 to 2019 by means of the Application for Extracting and Exploring Analysis Ready Samples (AppEEARS) \cite{2020appeears} in a polygonal containing the boundary of Alta Murgia Park (see Figure \ref{fig:npp}). 

\begin{figure}[h!] 
\begin{center} 
  \includegraphics[width=\textwidth]{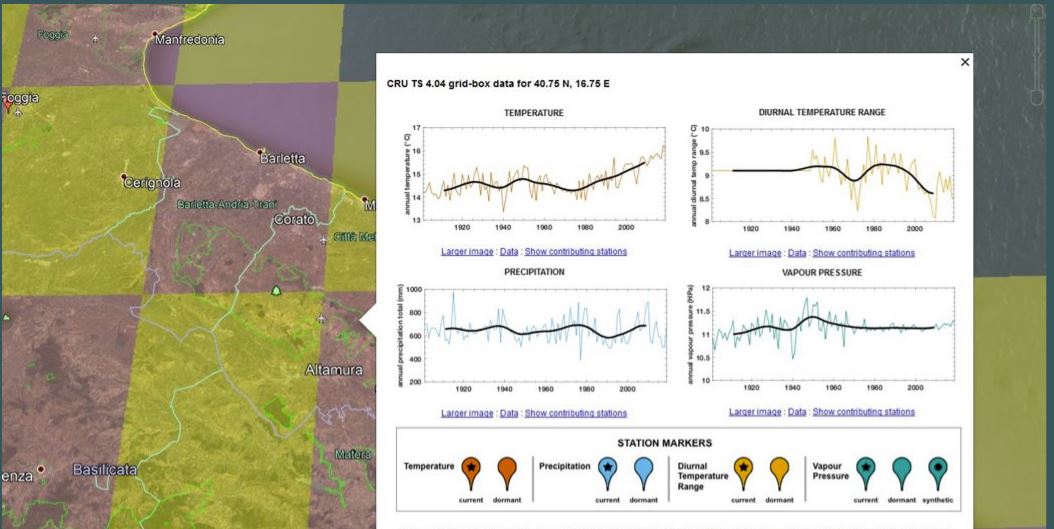}
  \caption{Climate data at (40\si{\celsius} 75' N, 16\si{\celsius} 75' E) from CRU TS 4.04 grid-box dataset of the  Climatic Research Unit (University of East Anglia).}\label{fig:weather}
  \includegraphics[width=\textwidth]{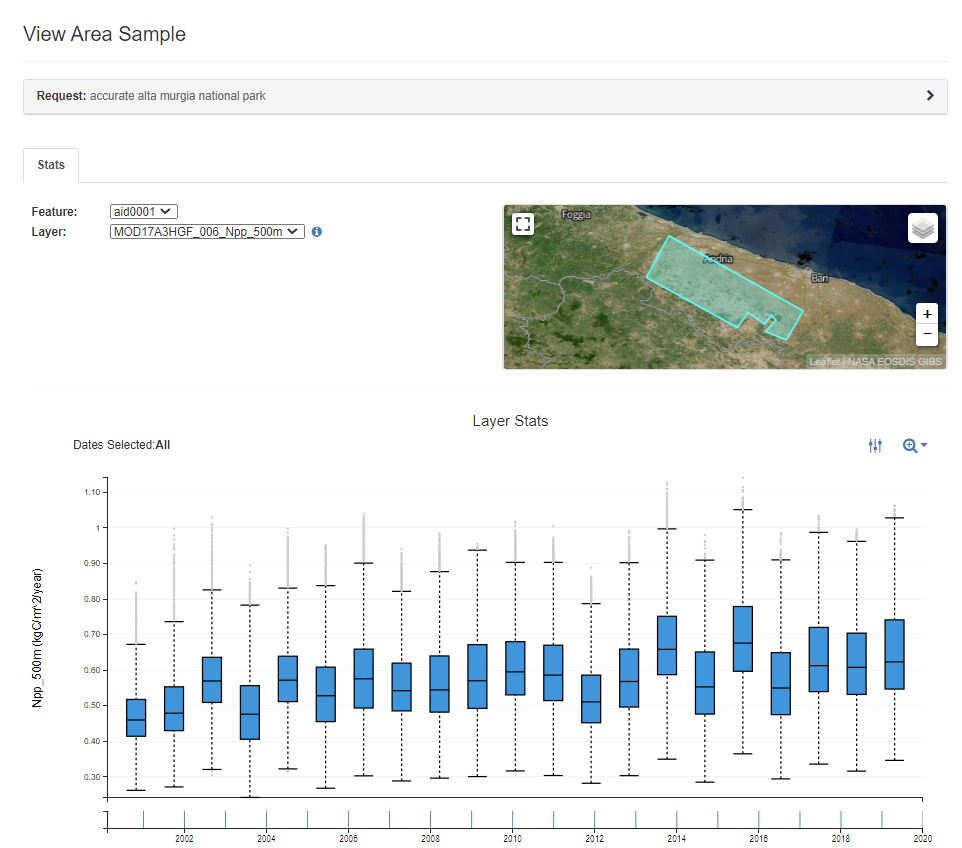}\\ 
  \caption{Selected layer and temporal values of NPP from MOD17 project of NASA EOS program. }\label{fig:npp}  
\end{center}
\end{figure}

\noindent In  Figure \ref{fig:nppvstemp} we report the annual NPP values and the averaged annual temperatures  with respect to their reference values set at $t_0\,=\,2005$, extracted by the above dataset.  As expected, to increasing temperatures correspond increasing values for NPP.

\begin{figure}[t] %\label{fig:npp} 
\begin{center} 
  \includegraphics[width=11cm]{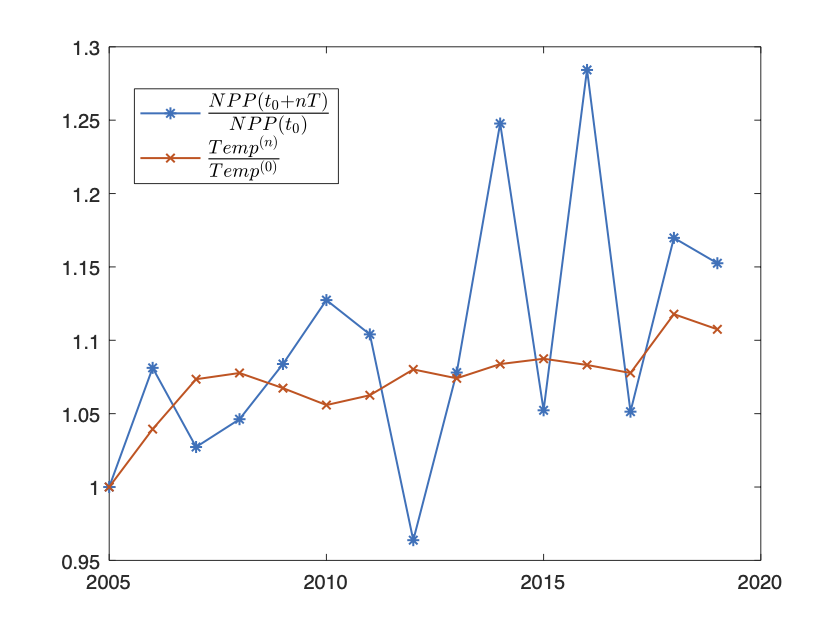}
  \caption{Behaviour of relative values of NPP and annual averaged temperatures in temporal interval $[2005,\, 2019]$ with respect to their initial values.  }\label{fig:nppvstemp}  
\end{center}
\end{figure}

\noindent Three different  formulations are used for modelling the periodic function $\hat g_r(t)$. For values of $r \in \, r(a):=\{ r\geq 1 \}$  corresponding to the arable class, we set $\hat g_r(t)= \hat g_{r(a)}(t)$; for  $r\, \in r(g):=\{0.5 \leq r < 1\}$ associated to the grassland class, $\hat g_r(t)= \hat g_{r(g)}(t)$
and we set  $\hat g_r= \hat g_{r(f)}(t)$ in correspondence of the forest class described by values $r \in r(f):=\{ 0 \leq  r \leq 0.5\}$. The monthly values at $t\, = t_{m}^{(n)}$ for $m=1,\dots\,12$,  of the three main land use  distributions $\hat g_{r(a)},\, \hat g_{r(g)}, \,\hat g_{r(f)}$ 
 are reported in Table \ref{tab:pagf}. The reported values are assumed equal to  the distribution  of plant carbon inputs  given in \cite{gottschalk2012will} which  mimics the dynamics of typical crop rotations and of permanent grassland or forest in Europe. Finally,  in Table \ref{tab:pagf} we report also the values for $k_c(t,r)$ at $t=t_{m}^{(n)}$, for the three main land use, i.e $k_c(t_{m}^{(n)},r(a)),\, k_c(t_{m}^{(n)},r(g)), \,k_c(t_{m}^{(n)},r(f)),$  assuming that the soil cover function $S_r(t)$ is periodic.  Plant cover
was assumed to occur 
in months $1$-$7$ and $12$ for the arable (croplands) class  \cite{smith2005projected}. 

 \begin{table}[t]
     \centering
  \scriptsize{
  \begin{tabular}{||l||r|r|r|r|r|r||}
\hline
\hline
t & $\hat g_{r(a)}(t)$ & $k_c(t,r(a))$ & $\hat g_{r(g)}(t)$ &$k_c(t,r(g))$ & $\hat g_{r(f)}(t)$ & $k_c(t,r(f))$\\
\hline
\hline
$t^{(n)}_1$\, (Jan, 31) & 0.0 &0.6 & 0.05&0.6 & 0.025 & 0.6\\
\hline
$t^{(n)}_2$\, (Febr, 28) & 0.0 &0.6 & 0.05 & 0.6& 0.025 & 0.6\\
\hline
$t^{(n)}_3$\, (Mar, 31) & 0.0 &0.6 &0.05 & 0.6& 0.025 & 0.6\\
\hline
$t^{(n)}_4$\, (Apr, 30) & 1/6 & 0.6 & 0.05 & 0.6 & 0.025 &0.6 \\
\hline
$t^{(n)}_5$\, (May, 31) & 1/6 & 0.6 & 0.10 & 0.6& 0.05 & 0.6\\
\hline
$t^{(n)}_6$\, (Jun, 30 ) & 1/6 & 0.6& 0.15 & 0.6& 0.05 & 0.6 \\
\hline
$t^{(n)}_7$\, (Jul, 31 ) & 0.5 & 0.6& 0.15 & 0.6& 0.05 & 0.6 \\
\hline
$t^{(n)}_8$\, (Aug, 31) & 0.0 & 1& 0.10 &0.6 & 0.05 & 0.6\\
\hline
$t^{(n)}_9$\, (Sept, 30) & 0.0 & 1& 0.10 & 0.6& 0.20 & 0.6\\
\hline
$t^{(n)}_{10}$\, (Oct, 31) & 0.0 & 1 & 0.10 & 0.6 & 0.20 & 0.6\\
\hline
$t^{(n)}_{11}$\, (Nov, 30 ) & 0.0 & 1 & 0.05 & 0.6 & 0.20 & 0.6\\
\hline
$t^{(n)}_{12}$\, (Dec, 31) & 0.0 & 0.6 &0.05 & 0.6 & 0.10 & 0.6\\
\hline
\hline
\end{tabular}}
 \caption{Monthly ($t=t^{(n)}_m$, \,  n=0,\, 1,\, 2, \dots) distribution of plant carbon inputs into the soil expressed as a proportion of the total $\hat g_r(t)$ and rate modifying factor $k_c(t,r)$ related to soil cover. Data from \cite{gottschalk2012will} and \cite{smith2005projected}.}
     \label{tab:pagf}
      \end{table}

%\subsection{Numerical trends of sensitivity from $2005$-$2019$}
\subsection{Numerical trends of sensitivity from 2005 to 2007}
In this section, using the Alta Murgia National Park data in the period %$2005$-$2019$, 
2005-2007
we want to show the behaviour of the sensitivities of the SOC change index to average annual temperature, to the relative value of NPP and to $r\,=\,DPM / RPM$ ratio. We chose  $t_0\,=\,2005\, T$, with $T=12$, thus $Temp^{(1)}=14.27 $°C is the average temperature of 2006 and $N_P^{(1)}=1.08$ is the ratio between the Net Primary Production of 2006 and the Net Primary Production of 2005.
% \sout{We obtain the numerical approximations of the sensitivities in correspondence of three values of the parameter $r$, $r\in\{0.25,0.67,1.44\}$, one for each land class, by solving with the Matlab routine ode45 the initial value problems 
% (\ref{eq:ivp sens delta c to temp}), (\ref{eq:ivp sens delta c to N}) and (\ref{eq:ivp sens delta c to r}) for $n=1$, together with the Cauchy problem (\ref{eq:odedeltacn}),
% ($n=1$) and then recalling the property $s_{\Delta SOC,\phi}=\mathds{1}^\intercal\, \mathbf{s}_{\Delta \overline{c},\phi}$, where $\phi\in\{Temp^{(1)},\,r,\,N_P^{(1)}\}$}.\\
Once we have computed the numerical solution of the Cauchy problem (\ref{eq:odedeltacn})
for $n=1$, we obtain the  sensitivities by summing up the four components of the numerical solution of the initial value problems 
(\ref{eq:ivp sens delta c to temp}), (\ref{eq:ivp sens delta c to N}) and (\ref{eq:ivp sens delta c to r}), for $n=1$.\\
% \noindent \sout{At first, let us focus on the sensitivity of $\Delta SOC_{\rho^{(0)}(r)}$ to $Temp^{(1)}$.
% Let us observe that the numerical approximation of the sensitivity to the average temperature in 2006 in Figure \ref{fig:sensitivity to Temp_NP1_r} is a negative function for all $t\in]\,2006\,T,\,2007\,T\,]$, and it is consistent with Theorem \ref{thm: sensitivity to Temp} which states that there exists an $\epsilon>0$ such that $s_{\Delta SOC,Temp^{(1)}}(t)$ is negative for all $t\,\in\,]\,2005\,T+T,2005\,T\,+T +\epsilon\,]=]\,2006\,T,\,2006\,T\,+\epsilon\,]$.
% Thus, an increase in the average temperature of 2006 would have reduced $\Delta SOC_{\rho^{(0)}(r)}(t)$ for all $t\in]\,2006\,T,\,2007\,T\,]$. Thus, recalling the definition of $\Delta SOC_{\rho^{(0)}(r)}(t)=\frac{SOC(t)-SOC(t_0)}{P(t_0)+F(t_0)}$, since $SOC(t_0)$ is a fixed value, it follows that the sum of the soil carbon contained in compartments would have decreased for all $t\in]\,2006\,T,\,2007\,T\,]$ due to an increase in temperatures.
% %Since its initial value is $\Delta SOC_{\rho^{(0)}(r)}(0)=0$, not only is $\Delta SOC_{\rho^{(0)}(r)}(t)$ a decreasing function for all $t\in]\,2006\,T,\,2020\,T\,]$, it also is a negative function. % \noindent Moreover, since the sensitivity of $\Delta SOC_{\rho^{(0)}(r)}$ to $Temp^{(1)}$ is a decreasing function of time, we can deduce that the perturbation in the average temperature of 2006 not only would have affected the rate of decomposition at every month, but also would have accumulated over time.\\}

\noindent{The numerical approximation of the sensitivity to the average temperature in 2006, depicted in Figure \ref{fig:sensitivity to Temp_NP1_r}.a, is a negative function of time, consistently with Theorem \ref{thm: sensitivity to Temp}.
Thus, an increase in the average temperature of 2006 would have reduced $\Delta soc_{\rho^{(0)}(r)}$ during the year and, consequently, the sum of the soil carbon contained in compartments would have decreased too.
%Since its initial value is $\Delta SOC_{\rho^{(0)}(r)}(0)=0$, not only is $\Delta SOC_{\rho^{(0)}(r)}(t)$ a decreasing function for all $t\in]\,2006\,T,\,2020\,T\,]$, it also is a negative function. 
\noindent Moreover, since the sensitivity of $\Delta soc_{\rho^{(0)}(r)}$ to $Temp^{(1)}$ is a decreasing function of time, we can deduce that the perturbation in the average temperature of 2006  would have affected the rate of decomposition at every month, and this effect would have been  amplified  over time.\\}
\begin{figure}[t] 
\begin{center} 
  \includegraphics[width=12cm]{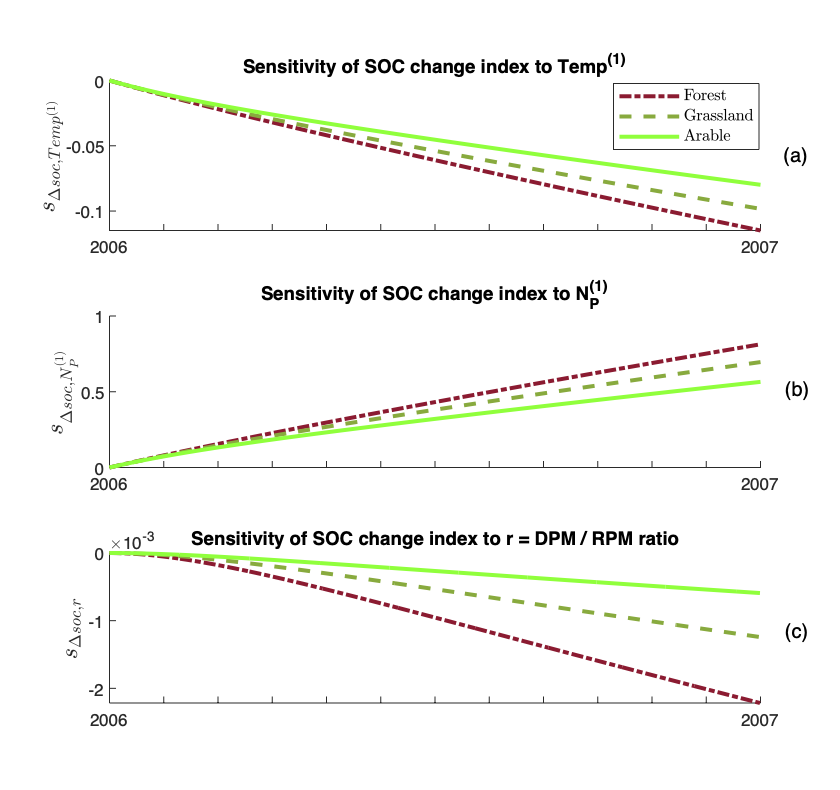}
  \end{center}
  \caption{Numerical non-standard approximation of the temporal evolution  of $s_{\Delta soc,Temp^{(1)}}$, $s_{\Delta soc,N_P^{(1)}}$ and $s_{\Delta soc,r}$ in 2006, with time-step $\Delta t=0.01$. Parameters: $r=0.25$ for the forest class, $r=0.67$ for the grassland class and $r=1.44$ for the arable class.}\label{fig:sensitivity to Temp_NP1_r}
\end{figure}
% \noindent \sout{Let us focus on the sensitivity of $\Delta SOC_{\rho^{(0)}(r)}$ to $N^{(0)}$. The numerical approximation of the sensitivity to $N^{(0)}$ in Figure \ref{fig:sensitivity to N} 
% is a positive function for all $t\in]\,2006\,T,\,2007\,T\,],$ consistently with Theorem \ref{thm: sensitivity to N}.
% Thus, an increase in the Net Primary Production of 2006 with respect to the Net Primary Production of 2005, would have increased $\Delta SOC_{\rho^{(0)}(r)}(t)$, and consequently the sum of the soil carbon contained in compartments, for all $t\in]\,2006\,T,\,2007\,T\,]$  
% %Since its initial value is $\Delta SOC_{\rho^{(0)}(r)}(0)=0$, not only is $\Delta SOC_{\rho^{(0)}(r)}(t)$ an increasing function for all $t\in]\,2006\,T,\,2020\,T\,]$, it also is a positive function. Thus, the sum of the soil carbon contained in compartments increases for all $t\in]\,2006\,T,\,2020\,T\,]$ due to an increase in the parameter $N(n)$.
% \noindent Moreover, since the sensitivity of $\Delta SOC_{\rho^{(0)}(r)}$ to $N^{(0)}$ is an increasing function of time, the perturbation in $N$ not only  would have affected the rate of decomposition at every month, but also would have accumulated in a decelerating way. }\\

\noindent {Analogously, we can observe that the numerical approximation of the sensitivity of $\Delta soc_{\rho^{(0)}(r)}$ to $N_P^{(1)}$ is consistent with Theorem \ref{thm: sensitivity to N}. In fact in Figure \ref{fig:sensitivity to Temp_NP1_r}.b it is depicted as a positive (and increasing) function of time.
This means that an increase in the Net Primary Production in 2006 with respect to the Net Primary Production in 2005, would have increased $\Delta soc_{\rho^{(0)}(r)}$, and consequently the sum of the soil carbon contained in compartments, during the year. 
\noindent Moreover, the perturbation in $N_P^{(1)}$ would have affected the rate of decomposition at every month with this effect amplified over time although at a decreasing pace.\\}

% \noindent \sout{Finally, let us focus on the sensitivity of $\Delta SOC_{\rho^{(0)}(r)}$ to $r$. The numerical approximation of the sensitivity to $r$ in Figure \ref{fig:sensitivity to r 2006 2007} 
% is a negative function for all $t\in]\,2006\,T,\,2007\,T\,],$ consistently with Theorem \ref{thm: sensitivity to r}, since $\vartheta^{(1)}=4.3620e-04>0$.
% Thus, an increase in the parameter $r$ at the beginning of 2006, would have decreased $\Delta SOC_{\rho^{(0)}(r)}(t)$, and consequently the sum of the soil carbon contained in compartments, for all $t\in]\,2006\,T,\,2007\,T\,]$. 
% %Since its initial value is $\Delta SOC_{\rho^{(0)}(r)}(0)=0$, not only is $\Delta SOC_{\rho^{(0)}(r)}(t)$ an increasing function for all $t\in]\,2006\,T,\,2020\,T\,]$, it also is a positive function. Thus, the sum of the soil carbon contained in compartments increases for all $t\in]\,2006\,T,\,2020\,T\,]$ due to an increase in the parameter $N(n)$.
% \noindent Moreover, since the sensitivity of $\Delta SOC_{\rho^{(0)}(r)}$ to $r$ is a decreasing function of time, the perturbation in $r$ not only would have affected the rate of decomposition at every month, but also would have accumulated over time.\\}

\noindent {Finally, let us focus on the sensitivity of $\Delta soc_{\rho^{(0)}(r)}$. According to our data, $\vartheta^{(1)}=4.3620\cdot 10^{-4}$. Thus, since $\vartheta^{(1)}$ is positive, by Theorem \ref{thm: sensitivity to r} we have that the sensitivity is a negative function of time and this is consistent with Figure \ref{fig:sensitivity to Temp_NP1_r}.c.
Thus, an increase in the parameter $r$ at the beginning of 2006, i.e. a transition from forest to grassland and to arable classes, would have caused a decrease in  $\Delta soc_{\rho^{(0)}(r)}$ and the sum of the soil carbon over the  compartments during that year.
%Since its initial value is $\Delta SOC_{\rho^{(0)}(r)}(0)=0$, not only is $\Delta SOC_{\rho^{(0)}(r)}(t)$ an increasing function for all $t\in]\,2006\,T,\,2020\,T\,]$, it also is a positive function. Thus, the sum of the soil carbon contained in compartments increases for all $t\in]\,2006\,T,\,2020\,T\,]$ due to an increase in the parameter $N(n)$.
\noindent Also in this case, the perturbation in $r$ would have affected the rate of
decomposition at every month, again with an amplification of the effect  over time. }\\
% \noindent \sout{Notice that the numerical approximation of the sensitivity of $\Delta SOC_{\rho^{(0)}(r)}$ to $r$ can be computed not only on the first time interval $]2006\,T,2007\,T]$ but also on the consecutive intervals, integrating the initial value problem (\ref{eq:ivp sens delta c to r}) together with the Cauchy problem (\ref{eq:odedeltacn}), for $t\in\,]2005\,T+nT,2005\,T+(n+1)T\,],\,n=1,\dots,14$, and then recalling the property $s_{\Delta SOC,r}=\mathds{1}^\intercal\, \mathbf{s}_{\Delta \overline{c},r}$ (see Figure \ref{fig:sensitivity to r 2006 2019}).\\}

{\noindent Notice that the numerical approximation of the sensitivity of $\Delta soc_{\rho^{(0)}(r)}$ to $r$ can be computed not only on the first time interval but also on the following years, by integrating %solving
the initial value problem (\ref{eq:ivp sens delta c to r}) together with the Cauchy problem (\ref{eq:odedeltacn}), for $t\in\,]t_0+nT,t_0+(n+1)T\,],\,n=1,\dots,14$ (see Figure \ref{fig:sensitivity to r 2006 2019}).}

\begin{figure}[t] 
\begin{center} 
  \includegraphics[width=11cm]{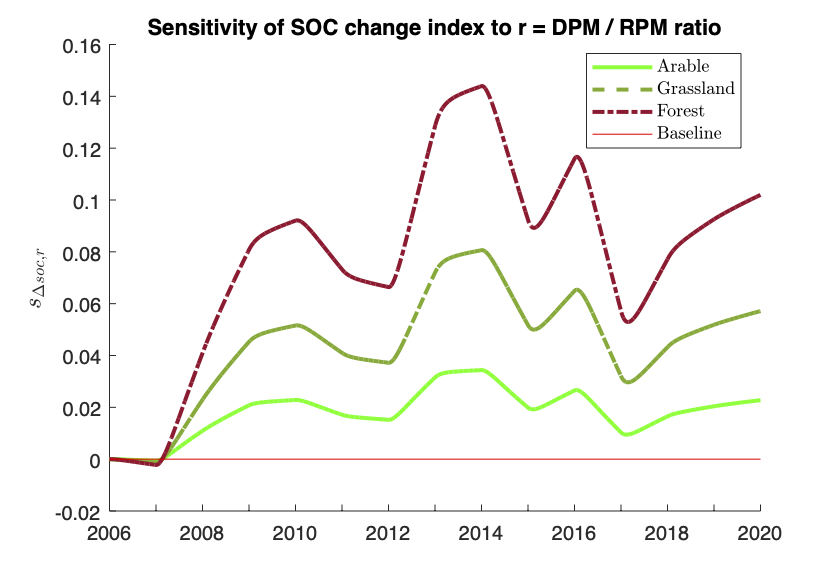}
  \end{center}
  \caption{Numerical non-standard approximation of the temporal evolution  of $s_{\Delta soc,r}$ over 14 years, with time-step $\Delta t=0.01$. Parameters: $r=0.25$ for the forest class, $r=0.67$ for the grassland class and $r=1.44$ for the arable class.}\label{fig:sensitivity to r 2006 2019}
\end{figure}

\subsection{SOC changes scenarios in years $2005$-$2019$}
We are going to illustrate the evolution of SOC changes in Alta Murgia National Park in the period $2005$-$2019$ taking as   baseline  its distribution  in $2005$ ($t_0\, =\,2005\, T$ with $T=12$). The  approximated values $\Delta \mathbf{c}_m^{(n)} \approx \Delta \mathbf{c}_{\rho^{(0)}(r)}(t_{m}^{(n)})$ of the solution of (\ref{eq:change_c}) for $t_{m}^{(n)}\, \in [t_0\,+\,n\,T, t_0\,+\, (n+1)\, T]$ with $n=1,\dots, 14$, provided by means of the non-standard discrete procedure described in (\ref{eq:change_c_approx_second}), are evaluated for the three main land use classes: forest, grassland and arable. 
For the arable case, we also show the farmyard manure program which would be able to assure the achievement of land degradation neutrality in 2019 with respect to 2005 taken as reference year. 

\subsubsection{Forest class}
For the forest class, the evolution of $\Delta soc_{\rho^{(0)}(r)}(t^{(n)}_{m})$, together with its averaged annual values, is given in Figure \ref{fig:forest}. We set $r= 1e-4$, $r=0.25$ (i.e. the value used in case of forest class in literature \cite{coleman1996rothc}), and $r=0.5$ in order to span all the values corresponding to this class. We  notice that,  for $r$ spanning the reference set $r(f)$, the trends do not differ much. However, even if it is still negative at the end of the interval, the general behaviour   of $\Delta soc_{\rho^{(0)}(r)}$ suggests that a positive value can be achieved  by $2030$. 
\begin{figure}[t] 
\begin{center} 
  \includegraphics[width=11cm]{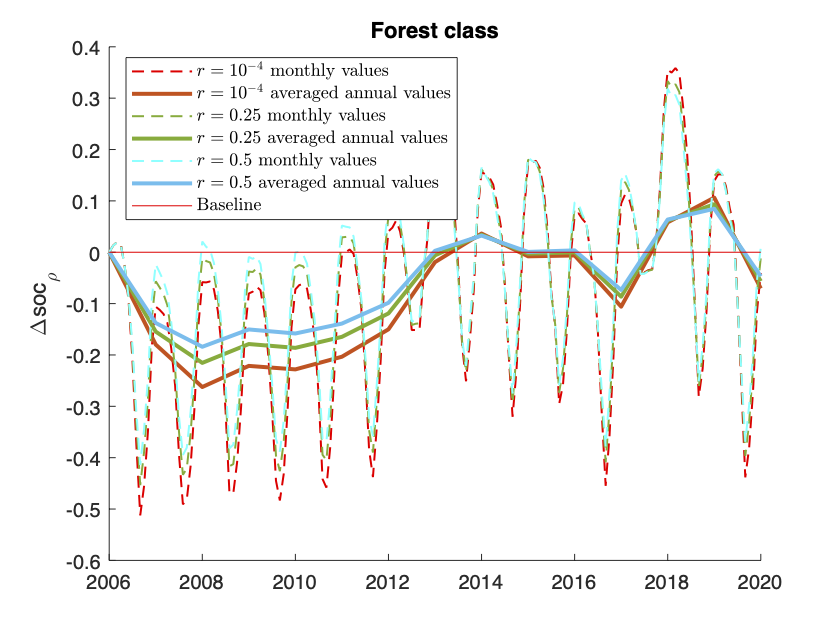}
  \end{center}
  \caption{The temporal evolution of $\Delta soc_{\rho^{(0)}(r)}(t^{(n)}_{m})$, together with its averaged annual values for forest class. Parameters $r=10^{-4}$, $r=0.25$, $r=0.5$. }\label{fig:forest}
\end{figure}
\subsubsection{Grassland class}

For the grassland  class, the evolution of $\Delta soc_{\rho^{(0)}(r)}(t^{(n)}_{m})$, together with its averaged annual values, is given in Figure \ref{fig:grassland}. We set $r=0.67$, (i.e. the value used in case of grassland class in literature \cite{coleman1996rothc}), $r=0.9$ and $r=0.95$ in order to span all the values corresponding to this class.   As for the forest class, the general trend of $\Delta soc_{\rho^{(0)}(r)}$ seems to be increasing even for a grassland scenario. Notice however that this class is much influenced by the value of $r$. For value $r=0.95$, close to the value which bounds from above the class $r(g)$, the curve reaches positive values at $2011$  and, although oscillating, it remains positive till the end of 2019. In correspondence of the value  $r=0.67$ which is the one adopted in the literature for this class, the final value is negative; however the general trend  seems to be increasing so that a positive value might be envisaged by 2030.  
\begin{figure}[t] 
\begin{center} 
  \includegraphics[width=11cm]{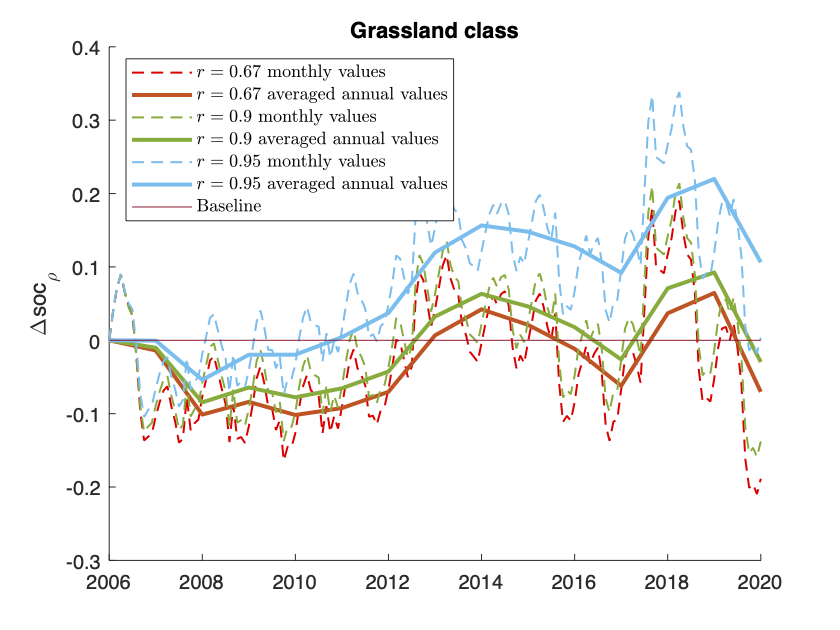}
  \end{center}
  \caption{The temporal evolution of $\Delta soc_{\rho^{(0)}(r)}(t^{(n)}_{m})$, together with its averaged annual values for grassland class. Parameters $r=0.67$, $r=0.9$, $r=0.95$. }\label{fig:grassland}
\end{figure}
\subsubsection{Arable class}
For the arable class, we firstly assume that no farmyard manure enter the system so that the evolution of $\Delta soc_{\rho^{(0)}(r)}(t^{(n)}_{m})$, together with its averaged annual values, is given in Figure \ref{fig:arable}. We set $r= 1$, $r=1.44$ (i.e. the value used in case of forest class in literature \cite{coleman1996rothc}), and $r=100$ in order to span all the values corresponding to this class.  This case  is the most critical one: the dynamics, even quantitatively different according to the values of $r\in r(a)$, is decreasing with this denoting a general trend departing from the baseline of positive values. For this class, in order to reach positive quantities, it is necessary to intensify the organic carbon input. %increments the input of organic carbon. 
To this aim we can apply the findings of  Theorem \ref{thm:fdit} in order to detect the optimal farmyard manure program  to enforce positive values of $\Delta soc_{\rho^{(0)}(r)}$. In  Figure \ref{fig:modifying} we report the temporal evolution of the modifying factor for farmyard manure $f_0(t)$, as defined in Definition \ref{defro}, for several values of $\epsilon$ spanning the interval $[0,\, 1]$. 
The effects of the fertilization process are shown in Figure \ref{fig:arable_controlled}.
\begin{figure}[h!] 
\begin{center} 
  \includegraphics[width=11cm]{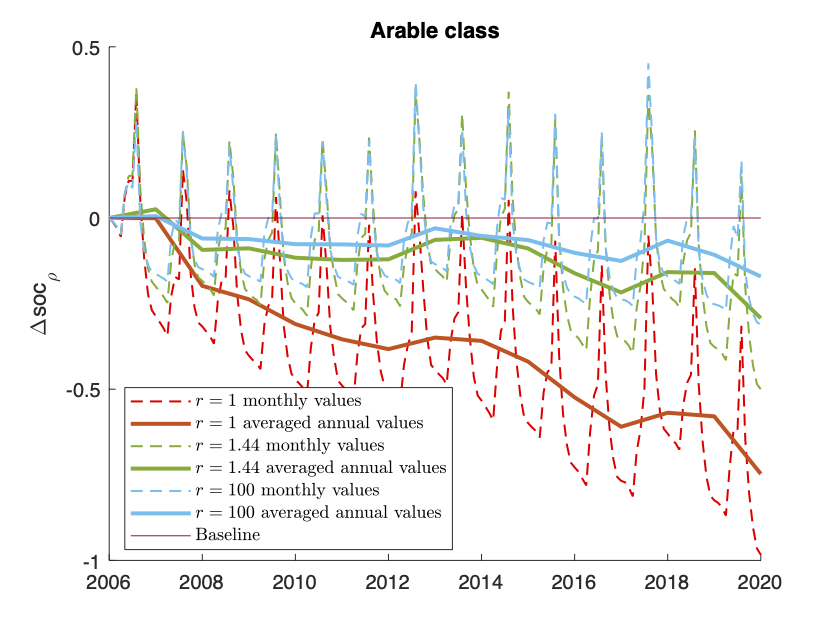}
  \end{center}
  \caption{The temporal evolution of $\Delta soc_{\rho^{(0)}(r)}(t^{(n)}_{m})$, together with its averaged annual values for the arable class. Parameters $r=1$, $r=1.44$, $r=100$. }\label{fig:arable}
\end{figure}
\begin{figure}[h!] 
\begin{center} 
  \includegraphics[width=11cm]{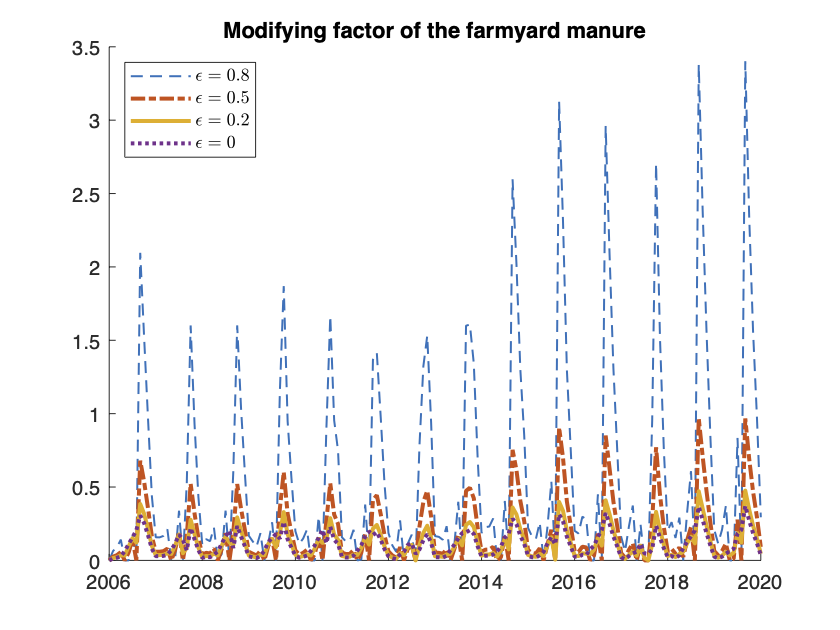}
  \end{center}
  \caption{The temporal evolution of modifying factor $f_0(t)$, for the arable class with $r=1$. Parameter $\epsilon=0.8,\, 0.5,\, 0.2,\, 0.$ }\label{fig:modifying}
\end{figure}

\begin{figure}[h!] 
\begin{center} 
  \includegraphics[width=11cm]{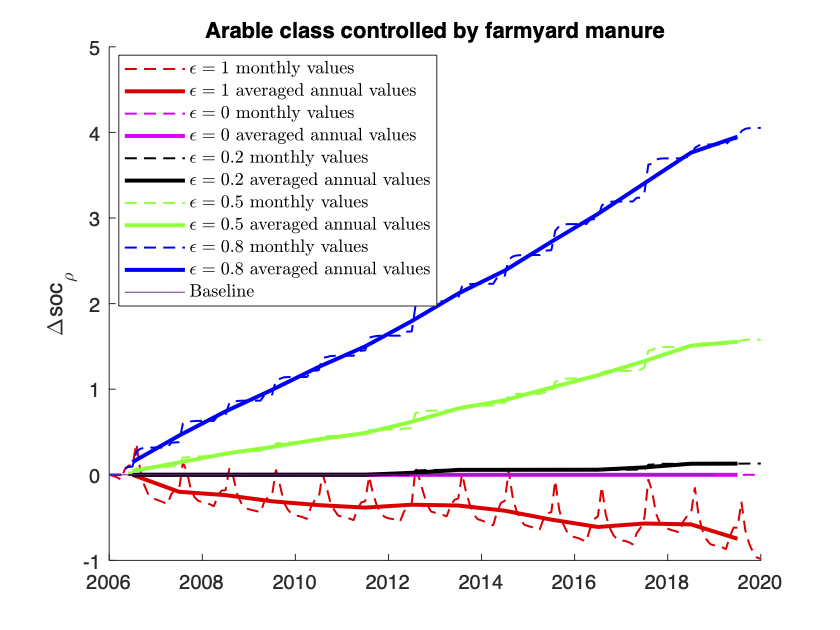}
  \end{center}
  \caption{The temporal evolution of $\Delta soc_{\rho^{(0)}(r)}(t^{(n)}_{m})$, together with its averaged annual values for the arable class with $r=1$ controlled by farmyard manure. Increasing values of $\Delta soc_{\rho^{(0)}(r)}(t^{(n)}_{m})$ for parameters $\epsilon= 0$, (no plant input), $\epsilon=  0.2,\, 0.5,\, 0.8$.  and $ \epsilon=1$ (no farmyard manure). }\label{fig:arable_controlled}
\end{figure}

\section{Comments and conclusion}\label{sec:8}
Soil carbon models (e.g. RothC \cite{coleman1996rothc}, Century \cite{parton1996century}) which take into account the interactions between climate and  land use management, are widely used to predict SOC changes under future climate scenarios.  Warmer temperatures  positively affect SOC stocks since they reduce decomposition, as an effect of a decreased soil moisture, and also increase Net Primary Production thus augmenting carbon inputs to the soil. On the other hand,  increasing temperatures negatively affect the SOC stocks as they increase the decomposition rate of soil organic matter. Hence, whether  soils gain or lose SOC, depends upon how balanced the competing gain and loss processes are, with subtle interacting changes in  temperature, moisture, soil type and land use \cite{gottschalk2012will}.

\noindent With the aim of  
improving the prediction of the factors that determine the size and direction of change, we have introduced  the so-called  {\it SOC change index} and we have described its evolution based on  the RothC carbon model. Under the hypothesis of constant environmental and organic fertilization conditions, it does not require to evaluate or measure the specific initial value of SOC,  as it describes the deviation from the assumed  initial equilibrium.

\noindent  The effectiveness of the novel index has been tested  for evaluating the  impact of warming temperatures on the achievement  of land degradation neutrality for the SOC indicator in Alta Murgia National Park, a protected area in the Apulia region located in the south of Italy.  The performed sensitivity analysis, based on time averaged parameter values, has provided local information on the impact of change in mean annual temperature,  of deviations of the mean annual NPP from its reference value and of the degree of decomposability of plant material.  The results of the sensitivity analysis is in  accordance with the experimental results, as we found that the SOC change index is negatively affected by increasing mean annual temperature and positively by increasing deviation of NPP. Changes in DPM/RPM ratio  $r$, which in turn are related to land use change, indicate that all land use classes are positively affected when deviation of NPP prevails on deviation in decomposition and negatively in the opposite case. In both cases the arable class results the most affected.

\noindent The simulated dynamics of the SOC change index in the Alta Murgia National Park in years $[2005,\, 2019]$ with climate data of CRU  (University  of  East Anglia) and  estimates of NPP taken  from  MOD17  project4,  indicate positive trends for forest and grassland classes. The arable class which is most affected by changes in NPP and temperature, as suggested by our sensitivity analysis, shows a negative trend. 
The dynamics of the SOC change index under the hypothesis of farmyard manure input has revealed a powerful tool for predicting the optimal land fertilization practice to implement for enhancing the  SOC stocks in the arable soil of Alta Murgia Park and invert the negative trend.

\noindent The construction of the SOC change index can be tailored on different soil carbon model dynamics. In particular, a future research direction is  represented  by the description of SOC change index under a suitable carbon model dynamics which  places the action of bacteria at the hearth of the  mechanisms of decomposition  process as indicated in \cite{lehmann2015contentious,hammoudi2015mathematical}.

%\begin{acknowledgements}
%If you'd like to thank anyone, place your comments here
%and remove the percent signs.
%\end{acknowledgements}

% Authors must disclose all relationships or interests that 
% could have direct or potential influence or impart bias on 
% the work: 
%
% \section*{Conflict of interest}
%
% The authors declare that they have no conflict of interest.

% BibTeX users please use one of
%\bibliographystyle{spbasic}      % basic style, author-year citations
\bibliographystyle{spmpsci}      % mathematics and physical sciences
\bibliography{biblio}   % name your BibTeX data base

% Non-BibTeX users please use
%\begin{thebibliography}{}
%
% and use \bibitem to create references. Consult the Instructions
% for authors for reference list style.
%
%\bibitem{RefJ}
% Format for Journal Reference
%Author, Article title, Journal, Volume, page %numbers (year)
% Format for books
%\bibitem{RefB}
%Author, Book title, page numbers. Publisher, %place (year)
% etc
%\end{thebibliography}

\clearpage
\section{Appendix}
\subsection{Thornthwaite's formula  for estimating the potential evapotranspiration }
 We need to estimate the 
      potential evapotranspiration  $pet(t)$, $[mm \, month^{-1}]$, estimated by means of the Thornthwaite's formula which is expressed, for the $n^{th}$ year, on a  monthly basis at the instants  $t_{m}^{(n)}\,:=\, t_0\,+nT\,+ \displaystyle \frac{T}{365} \displaystyle \sum_{i=1}^{m}\, N_i$ with $m = 1,\dots, 12$  and  $N_i$ denoting  the number of days of the $i^{th}$ month of the $n^{th}$ year\footnote{In a leap year $t_{m}^{(n)}\,:=\, t_0\,+nT\,+ \displaystyle \frac{T}{366} \displaystyle \sum_{i=1}^{m}\, N_i$  and $N_2=29$.}, as follows: 
     $$
     pet(t_{m}^{(n)}):= 16\, \displaystyle \frac{L_{d,m}^{(n)}}{12}\,\, \displaystyle \frac{N_m}{30} \,\left(\displaystyle \frac{10 \, Temp_{d,m}^{(n)} }{I_n}\right)^a. 
     $$
  In the above formula,  $L_{d,m}^{(n)}$ and $Temp_{d,m}^{(n)}$  represent  the average day length (hours) and the average daily temperature  of the  $m^{th}$ month of the $n^{th}$ year, respectively. Finally, $I_n$ is the heat index for the $n^{th}$ year given by 
     $$
     I_n\,=\, \displaystyle \sum_{k=1}^{12} \left(\displaystyle \frac{Temp_k^{(n)}}{5}\right)^{1.5}
     $$ 
     where $Temp_k^{(n)}:= \displaystyle \frac{\displaystyle \int_{t_{k-1}^{(n)}}^{t_{k}^{(n)}} Temp(s) \, ds}{t_{k}^{(n)}\,-\, t_{k-1}^{(n)}}$ is the $k^{th}$ monthly mean temperature, for $k=1,\dots, 12$. Finally,
     $$a\,=\, 6.7\, 10^{-7}\, I_n^3\,-\, 7.7 \, 10^{-5} \, I_n^2\, +\, 1.8\,  10^{-2} \, I_n \,+\, 0.49.$$
     
     \subsection{Estimation of the accumulate soil moisture deficit}
     The accumulate soil moisture deficit in the $n^{th}$ year, is also estimated on a  monthly basis at the instants  $t_{m}^{(n)}\,:=\, t_0\,+nT\,+ \displaystyle \frac{T}{365} \displaystyle \sum_{i=1}^{m}\, N_i$ with $m = 1,\dots, 12.$ 
     Then
      $Acc(t_{m}^{(n)},M)=0$ for all $m=1,\dots,\,\bar m$ 
such that $pet(t_{m}^{(n)})\,\leq  rain(t_{m}^{(n)})$, while
%      $$
%      Acc(t_{m}^{(n)},M)=\,0,\qquad \text{if}\quad  pet(t_{m}^{(n)})\,\leq  rain(t_{m}^{(n)}), \quad t_{m}^{(n)}\in [t_0+nT,\, \, t_{\epsilon}]
%      $$ where $t_{\epsilon}\,:=\, t_0+nT\,+\epsilon $, $0<\epsilon\, \leq T$ denotes the first instant when the above inequality holds not  strictly i.e.  $pet(t_{\epsilon})\,= rain(t_{\epsilon})$, while 
%  
$$
     Acc(t_{m}^{(n)},M) =\,  \min\left(\max\left(M,\, \,Acc(t_{m-1}^{(n)},M) + rain(t_{m}^{(n)}) \,-\,pet(t_{m}^{(n)})\,\,\right),\,0\right)
     $$
% $$
%      Acc(t_{m}^{(n)},M) =\,  \min\left(\max\left(M,\,\sum_{s=\bar m+1}^ {m}\, \,rain(t_{s}^{(n)}) \,-\,pet(t_{s}^{(n)})\,\,\right),\,0\right)
%      $$
for  $m=\bar m +1,\dots,T$. %     for $ t_{m}^{(n)} \in    ]t_{\bar m}^{(n)}, \, t_0+(n+1)T] $.
     %,  where $t_1$ is such that  
%   $0.75\,open(t_1) = rain(t_1)$ while $AccM(t)\,=\,0$, for $t\leq t_1$. 

\end{document}